\numberwithin{equation}{section}
\theoremstyle{plain}
\newtheorem{theorem}{Theorem}[section]
\newtheorem{lemma}[theorem]{Lemma}
\newtheorem{proposition}[theorem]{Proposition}
\newtheorem{corollary}[theorem]{Corollary}
\theoremstyle{definition}
\newtheorem{definition}[theorem]{Definition}
\theoremstyle{remark}
\newtheorem{remark}[theorem]{Remark}
\renewcommand{\epsilon}{\varepsilon}
\newcommand{\Opw}{\operatorname{Op^w}}
\newcommand{\Oph}{\operatorname{Op}_h^{\operatorname{w}}}
\renewcommand{\div}{\textup{div}}
\newcommand{\OpM}{\operatorname{Op^M}}
\newcommand{\Addresses}{
  \bigskip

 Gabriel Rivi\`ere\par\nopagebreak \textsc{Laboratoire de Math\'ematiques Jean Leray (UMR CNRS 6629), Nantes Universit\'e, 2 rue de la Houssini\`ere, 44322 Nantes Cedex 03, France}\par\nopagebreak
  \par\nopagebreak
  \textit{E-mail address:} \texttt{gabriel.riviere@univ-nantes.fr} 

  \bigskip

  Maria Teresa Rotolo \par\nopagebreak\textsc{Scuola Internazionale Superiore di Studi Avanzati (SISSA), via Bonomea 265, 34136 Trieste, Italy}\par\nopagebreak
  \textit{E-mail address:} \texttt{mrotolo@sissa.it}

}
\title{Sobolev instability for perturbations of periodic transport equations}
\author{Gabriel Rivi\`ere and Maria Teresa Rotolo}
\begin{document}

\maketitle
\begin{abstract}
We consider linear and time-dependent perturbations of periodic transport equations on the two-dimensional torus. For generic perturbations, we prove the existence of a large class of initial data whose Sobolev norms diverge exponentially fast. In higher dimensions, this remains true under a Morse-Smale assumption on the resonant part of the perturbation. In both cases, this is achieved by a normal form procedure and by studying Sobolev instabilities for time-dependent perturbations of Morse-Smale transport equations. The latter are analyzed on general compact manifolds using techniques from microlocal analysis and hyperbolic dynamics.
\end{abstract}


\section{Introduction and main results}

In this article we construct unstable solutions for certain classes of linear and time dependent transport equations, through the analysis of their Sobolev norms. The study of instabilities is, in its full generality, a standard topic in linear and nonlinear partial differential equations, which has many facets. In particular, a widely used method is to look at the evolution in time of the Sobolev norms~\eqref{sobolev.spaces} of solutions: an unbounded growth in time of the positive Sobolev norms is an indicator of energy transfer phenomena, in the form of energy cascades towards higher frequencies. A class of equations that we will study consists of small-in-size, time dependent perturbations of Morse-Smale transport equations on compact manifolds (see equation \eqref{transport.eq}). However, since our analysis is motivated by applications to the dynamics of integrable partial differential equations and recent results in KAM theory, we begin by giving an application of our main theorem in this direction. 

Let $n\geq 1$ and denote by $\mathbb{T}^{n+1}$ (with $\T:=\mathbb{R}/2\pi \Z$) the Euclidean $(n+1)$-dimensional torus. Given $\nu\in\R^n$ and $\omega\in\R$, we consider the following equation
\begin{equation}\label{transport.torus}
 \partial_tu_\e= \left(\nu+ \e V (\omega t,x)\right)\cdot\nabla_xu_\e +\frac{\e}{2}\text{div}_x(V(\omega t,x))u_\e,\quad x\in\T^n,\ t\in\R,\ \e>0,   
\end{equation}
where $V (t,x)\in C^{\infty}(\T \times \T^n; \R^n)$ is a time-dependent vector field and where $\text{div}(\cdot)$ denotes the divergence with respect to the standard Euclidean volume on $\T^n$. In particular, note immediately that the transport operator on the right-hand side of the equation is formally skew adjoint on $L^2(\T^n)$ for every $t\in\R$. Thus one has 
$$
\|u_\e(t)\|_{L^2}=\|u_\e(0)\|_{L^2}, \quad \forall t\in\R,\ \forall\e\geq 0.
$$
Moreover, it is worth noting that, for $\e=0$, all the Sobolev norms are preserved, i.e.
$$
 \|u_0(t)\|_{\sigma}=\|u_0(0)\|_{\sigma}, \quad \forall t\in\R,\ \forall \sigma \in \R,\
$$
where $\|.\|_\sigma$ denotes the standard Sobolev norm on $\T^n$ (see~\eqref{sobolev.spaces} below). It is natural to ask if this property is stable under perturbation, i.e. for $\e>0$ small enough. In~\cite[Th.~2.4]{BambusiLangellaMontalto} (see also~\cite{FeolaGiulianiMontaltoProcesi}), it is proved that, for generic choices of parameters $(\omega,\nu)$ in $[1,2]^{n+1}$ satisfying some Diophantine conditions,  equation~\eqref{transport.torus} is \emph{reducible}. Roughly speaking, it means that, up to a bounded transformation on the Sobolev space $\mathcal{H}^\sigma(\T^n)$, one can conjugate the equation to a diagonal one in the Fourier basis (up to smoothing remainders). Such choices of the parameters $(\omega, \nu)$ fall in what is usually called \emph{non-resonant} regime, and a direct consequence of this result is that either there exist solutions whose positive Sobolev norms blow up exponentially (in the case where the diagnonal operator has eigenvalues with nonzero real parts) or the Sobolev norms $\|u_\e(t)\|_\sigma$ remain bounded (for instance if $V$ is even). See~\cite[Cor.~2.5]{BambusiLangellaMontalto} for details.

In the present work, we consider instead a \emph{resonant} regime, i.e., we choose $\omega=1$ and $\nu \in \N^n$, so that the potential $V$ has the same time-period as the solution to the unperturbed equation which may create resonance phenomena. 
Using methods coming from the theory of hyperbolic dynamical systems, we prove that, in the case $n=2$ and for generic potentials $V$, there always exist solutions whose positive Sobolev norms grow exponentially. 
\begin{theorem}\label{theorem.torus} 
Let $n=2$, $\omega = 1$ and $\nu\in\N^2\setminus\{0\}$. Then there exists an open and dense subset $\mathcal{O}_\nu$ of~\footnote{The set is endowed with its standard Fr\'echet topology.} $C^{\infty}(\T^3;\R^2)$ such that, for every $V\in \mathcal{O}_\nu$, the following holds.

There exists $\delta_0>0$ such that, for every $0 \leq \sigma \leq \delta_0$ and for every $0 \leq \e \leq \delta_0 \sigma^2$, one can find $v_\e \in \cH^\sigma(\T^2)$ and $r:=r(\e, \sigma, v_\e)>0$ such that 
\begin{equation}
    \norm{u_\e(0) - v_\e}_\sigma \leq r \quad \implies \quad \norm{u_\e(t)}_\sigma \geq \delta_0 e^{\delta_o \sigma t\e} \norm{u_\e (0)}_{\sigma}, \quad \mbox{ for any } t >0, 
\end{equation}
where $u_\e(t)$ is the solution to equation \eqref{transport.torus} with initial datum $u_\e(0)$.
\end{theorem}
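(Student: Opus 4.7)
My plan is to reduce the problem, via a change of frame and a normal form, to a small time-dependent perturbation of an autonomous Morse-Smale transport equation on $\T^2$, and then apply the instability theorem for such systems promised in the abstract.

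\textbf{Step 1 (Moving frame and averaging).} Since $\omega = 1$ and $\nu \in \N^2$, the unperturbed flow $\phi^t(x) = x + t\nu$ is $2\pi$-periodic in $t$. Setting $w_\e(t,x) := u_\e(t, x + t\nu)$ conjugates equation \eqref{transport.torus} into
$$
\partial_t w_\e \;=\; \e\, \tilde V(t,x) \cdot \nabla_x w_\e + \tfrac{\e}{2} \div_x(\tilde V(t,x))\, w_\e, \qquad \tilde V(t,x) := V(t, x + t\nu),
$$
with $\tilde V$ jointly $2\pi$-periodic in $(t,x)$. Decomposing $\tilde V = V_0(x) + W(t,x)$ into its $t$-average $V_0$ and zero-mean part $W$, a Birkhoff-type normal form (obtained by solving $\partial_\tau A = W$ in the fast time $\tau$ and using $A$ to build a bounded change of unknown on $\cH^\sigma$) absorbs $W$ into a remainder of size $O(\e^2)$. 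In slow time $s = \e t$, one is left with the autonomous transport equation driven by $V_0$, up to controlled perturbations.

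\textbf{Step 2 (Genericity via Peixoto).} The averaging map $V \mapsto V_0$ is a continuous surjection $C^\infty(\T^3;\R^2) \to C^\infty(\T^2;\R^2)$. By Peixoto's theorem on the $2$-torus, the Morse-Smale vector fields form an open dense subset of the target, whose preimage supplies the desired open dense $\mathcal{O}_\nu \subset C^\infty(\T^3;\R^2)$. For $V \in \mathcal{O}_\nu$, the averaged field $V_0$ then possesses at least one hyperbolic fixed point $x_0$ with a positive expansion rate $\lambda > 0$ along its unstable direction.

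\textbf{Step 3 (Instability from hyperbolicity).} Apply the instability theorem for time-dependent perturbations of Morse-Smale transport flows. The underlying mechanism is that the cotangent flow of $V_0$ stretches momentum exponentially inside a cone dual to the stable direction at $x_0$; choosing $v_\e$ to be a wave packet concentrated near $x_0$ with Fourier support aligned with this expanding cone, a propagation-of-singularities / Egorov argument in the $\cH^\sigma$ scale yields
$$
\|w_\e(t)\|_\sigma \;\geq\; c\, e^{\sigma \lambda \e t}\|w_\e(0)\|_\sigma
$$
for some $c>0$ and all $t>0$, with the lower bound holding uniformly for initial data in a small $\cH^\sigma$-ball around $v_\e$. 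Undoing the moving-frame conjugation is an isometry on $\cH^\sigma$ and preserves the bound.

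\textbf{Step 4 (Closing the argument; main obstacle).} The scaling $\e \leq \delta_0 \sigma^2$ is tuned so that (i) the $O(\e^2)$ remainder produced by the normal form is dominated by the effective expansion $e^{\sigma \lambda \e t}$ over the relevant time scales and (ii) the $\cH^\sigma$-boundedness of the normal-form conjugation is uniform in the range of $\sigma$ considered. The main obstacle is Step 3: one must establish a robust, \emph{open-ball} lower bound on Sobolev norms under an arbitrary small time-dependent perturbation of a Morse-Smale vector field on a general compact manifold. This is a purely microlocal and hyperbolic-dynamics problem, and judging from the abstract it constitutes the technical core of the paper.
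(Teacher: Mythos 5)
Your proposal follows essentially the same route as the paper's Section~\ref{sec.2D}: conjugate to a frame comoving with the periodic flow, perform a periodic normal form that reduces the transport term to the resonant average $\langle V\rangle_\nu$ modulo $O(\e^2)$, rescale time $s=\e t$, and invoke the Morse-Smale instability theorem (Theorem~\ref{main.theorem.pseudo}) together with Peixoto's genericity (Remark~\ref{generic.2d}); the paper performs the diffeomorphism normal form first and the time translation $\cU(t)$ second, but your order is equivalent. Two small slips to fix: the moving-frame substitution must be $w_\e(t,x)=u_\e(t,x-t\nu)$, not $u_\e(t,x+t\nu)$ — with your sign the $\nu\cdot\nabla_x$ term is doubled rather than removed (and correspondingly the relevant average is $\frac{1}{2\pi}\int_0^{2\pi}V(\tau,x-\tau\nu)\,d\tau=\langle V\rangle_\nu$) — and a Morse-Smale field on $\T^2$ need not possess a fixed point, since the nonwandering set may consist only of hyperbolic closed orbits (which is exactly why Theorem~\ref{main.theorem} only requires a critical point when $n=1$); the cotangent stretching comes from those orbits too. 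Also, the constraint $\e\le\delta_0\sigma^2$ is not really tuned to dominate the $O(\e^2)$ remainder over time scales; after the rescaling $s=\e t$ that remainder becomes a size-$\e$ perturbation, and the quadratic threshold comes from the escape-function estimate in Lemma~\ref{lemma.growth.A}, where on the trapped neighborhood $\cE$ one only has $a\gtrsim\sigma\|\xi\|_x^\sigma$ and $\cX_h(a)\ge 0$, so the $O(\e)$ perturbation term must be absorbed by $\sigma\cdot a\gtrsim\sigma^2\|\xi\|_x^\sigma$.
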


Let us immediately remark that, for every potential $V \in C^\infty(\T\times \T^2; \R)$, the Sobolev norms of all solutions to equation \eqref{transport.torus} grow at most exponentially fast with the $L^2$-norm being preserved -- see  \cite[Th.~1.2]{MasperoRobert}. Thus, among other things, we identify a large class of time dependent transport equations for which this exponential growth is indeed sharp. Yet, we emphasize that the growth from this lower bound is effective for times $t\gg\e^{-1}$.

In order to produce instabilities, we will exploit the resonant part of the perturbation $V(t,x)$. Namely, we will rely on the dynamical properties of the \emph{resonant} vector field 
$$
\langle V\rangle_\nu(x):=\frac{1}{2\pi}\int_0^{2\pi}V(\tau, x-\tau \nu)d\tau,\quad x\in\T^2.
$$
Indeed, after a normal form procedure (see Section~\ref{sec.2D}), instabilities for~\eqref{transport.torus} can be obtained from unstable solutions to time dependent perturbations of transport equations generated by $\langle V\rangle_\nu$. In Theorem~\ref{main.theorem} below, we will prove that, as soon as $\langle V\rangle_\nu$ has the Morse-Smale property~\cite{Smale1960}, such unstable solutions exist. Then, a classical result due to Peixoto on the genericity of Morse-Smale vector fields in dimension $2$~\cite{Peixoto1962} will allow us to deduce the genericity of $V$ in $C^{\infty}(\T^3; \R^2)$. Peixoto's result is however peculiar of dimension $2$ and we are not aware of analogous generic sets of vector fields in higher dimension. Hence, in higher dimensions, we will only be able to produce unstable solutions when $\langle V\rangle_\nu$ belongs to the open set of Morse-Smale vector fields which does not generate a dense subset in $C^{\infty}(\T^{n+1};\R^n)$, when $n \geq 3$. See Theorem~\ref{theorem.torus.n} for details. 
We now move towards the statement of a general result on transport equations generated by Morse-Smale vector fields. This will constitute the main technical result used to prove Theorem~\ref{theorem.torus} and it is the content of Theorem \ref{main.theorem} below.

\subsection{Morse-Smale transport equations on manifolds}
We consider a smooth ($C^\infty$), compact and oriented manifold $M$ of dimension $n \geq 1$, without boundary. We endow the manifold $M$ with a Riemannian metric that we denote by $g$ and we let $\upsilon_g$ be the corresponding Riemannian volume on $M$. We denote by $\fX(M)$ the set of smooth vector fields on $M$ and we consider the following class of transport equations:
\begin{equation}\label{transport.eq}
    \pa_t u_\e = \cL_{V+\e \cP(t)} u_\e  +\frac{1}{2}\text{div}_{\upsilon_g}(V+\e\cP(t))u_\e, \quad x \in M, \ t \in \R, \ \e >0,
\end{equation}
where $V \in \fX(M)$, $\cP(t) \in C^0_b(\R; \fX(M))$ and $\text{div}_{\upsilon_g}$ is the divergence with respect to the Riemannian volume form. Here, $\cL$ denotes the usual lie derivative, i.e., the order one differential operator defined by
$
\cL_Y u := \di u [Y],
$
for every $Y \in \fX(M)$, and $C^0_b(\R; \fX(M))$ denotes the set of time dependent vector fields on $M$, which are continuous and bounded in time.  
Note that, as in equation \eqref{transport.torus}, the addition of the divergence term ensures that the operator on the right-hand side of equation \eqref{transport.eq} is skew adjoint for every time $t \in \R$ so that
$$
\|u_\e(t)\|_{L^2(M,\upsilon_g)}=\|u_\e(0)\|_{L^2(M,\upsilon_g)}, \quad \forall t\in\R, \ \forall \e >0.
$$

Our main goal is to prove existence of unstable solutions for equation \eqref{transport.eq}, where we again identify unstable solutions with the ones that exhibit an unbounded growth in time of their positive Sobolev norms. To this aim, recall that the Sobolev spaces $\cH^\sigma$ are defined as
\begin{equation}\label{sobolev.spaces}
    \cH^\sigma(M):= \left\{ u \in L^2(M; \C) : \norm{ ( 1-\Delta_g)^{\frac{\sigma}{2}} u }_{L^2(M,\upsilon_g)} < +\infty \right\}, \quad \sigma \in \R, 
\end{equation}
where $\Delta_g$ is the Laplace-Beltrami operator on $M$. The corresponding Sobolev norm will be denoted by $\|.\|_\sigma$. We emphasize that, also in this general case, one has only the a priori bound for every $\sigma>0$ \cite[Th.~1.2]{MasperoRobert}
$$
 \|u_\e(t)\|_{\sigma}\leq C_\sigma e^{C_\sigma|t|}\|u_\e(0)\|_{\sigma},\quad \forall t\in\R,\ \text{for some}\ C_\sigma>0.
$$
Our main theorem shows that this exponential growth is in fact sharp for the class of Morse-Smale vector fields.

\begin{theorem}\label{main.theorem}
Let $V\in \fX(M)$ and $\cP \in C^0_b(\R, \fX(M))$. Suppose that $V$ is a Morse-Smale vector field (see Definition \ref{def.MS}) and, if $n=1$, suppose in addition that it has at least one critical point.

Then, there exists $\delta_{0}>0$ such that, for every $0\leq \sigma\leq \delta_0$ and for every $0\leq \e \leq \delta_0\sigma^2$, one can find $v_\e\in\mathcal{H}^\sigma(M)$ and $r:=r(\e,\sigma,v_\e)>0$ such that 
\begin{equation}\label{growth}
\|u_\e(0)-v_\e\|_{\sigma}\leq r\quad \implies\quad   \norm{u_\e(t)}_\sigma \geq \delta_0e^{\delta_0\sigma t}\|u_\e(0)\|_\sigma,\quad \text{for any}\ t>0,
\end{equation} 
where $u_\e(t)$ is the solution to equation \eqref{transport.eq} with initial datum $u_\e(0).$ 
\end{theorem}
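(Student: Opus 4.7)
The plan is to exhibit, for each admissible pair $(\sigma,\varepsilon)$, a specific initial datum $v_\varepsilon$ whose solution has $\mathcal{H}^\sigma$-norm growing at least like $e^{\delta_0\sigma t}$; linearity of the flow together with the a priori upper bound $\|u_\varepsilon(t)\|_\sigma\leq C_\sigma e^{C_\sigma t}\|u_\varepsilon(0)\|_\sigma$ then transfers this growth to all initial data in a small $\mathcal{H}^\sigma$-ball around $v_\varepsilon$, producing the radius $r$ in the statement. The distinguished datum $v_\varepsilon$ will be a semiclassical wave packet (coherent state) localized on a hyperbolic critical element of $V$ and microlocalized along a covector that is exponentially expanded by the cotangent flow of $V$. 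The symmetrizing divergence term in \eqref{transport.eq} is a zeroth-order operator which is needed for $L^2$-preservation but does not affect the principal symbol of the transport; it can therefore be neglected at leading order in the microlocal analysis.

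By Morse-Smale (together with the extra hypothesis when $n=1$), $V$ admits at least one hyperbolic critical element $\Gamma_0\subset M$, with a nontrivial splitting $T_{\Gamma_0}M\simeq\mathbb{R}V\oplus E^s\oplus E^u$ (the first summand being absent at a fixed point). The cotangent lift of $V$ is generated by the Hamiltonian $p(x,\xi)=\langle V(x),\xi\rangle$, and along $\Gamma_0$ it expands covectors in the annihilator $(E^u)^\perp$ at an exponential rate $\lambda>0$ governed by the expansion rates of $V$ on $E^u$. Fix such a covector $\xi_0$ and a large parameter $N\gg 1$; I take $v_\varepsilon$ to be a Gaussian coherent state of width $N^{-1/2}$ centered at a point $x_0\in\Gamma_0$, with oscillatory phase $e^{iN\xi_0\cdot x}$ in local coordinates. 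Then $\|v_\varepsilon\|_{L^2}\asymp 1$ and $\|v_\varepsilon\|_\sigma\asymp N^\sigma$.

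Next, I propagate $v_\varepsilon$ through \eqref{transport.eq}. A WKB/Egorov analysis for the non-autonomous principal-type operator (valid since the symbol is real and of order one) shows that $u_\varepsilon(t)$ remains a coherent state, modulo semiclassical remainders, tracking the cotangent trajectory of $V+\varepsilon\mathcal{P}(t)$ issued from $(x_0,N\xi_0)$. Because $\Gamma_0$ is a hyperbolic invariant set of $V$, a non-autonomous persistence result (stable manifold theorem for time-dependent small perturbations) yields an invariant structure near $\Gamma_0$ on which the cotangent flow still expands $\xi_0$ at an effective rate $\lambda'\geq \lambda - C\varepsilon>0$. Hence $|\xi(t)|\gtrsim Ne^{\lambda' t}$, and at the Sobolev level
\begin{equation*}
\|u_\varepsilon(t)\|_\sigma \;\gtrsim\; |\xi(t)|^\sigma \;\asymp\; e^{\lambda'\sigma t} N^\sigma \;\asymp\; e^{\lambda'\sigma t}\|v_\varepsilon\|_\sigma,
\end{equation*}
which, after choosing $\delta_0$ small relative to $\lambda$ and calibrating $N$ as a function of $\sigma$, yields the claimed lower bound \eqref{growth}.

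The main obstacle is the long-time microlocal control. Standard coherent-state propagation is valid only up to the Ehrenfest time $T_E\sim(2\lambda)^{-1}\log N$; beyond $T_E$ the wave packet spreads and the classical/quantum correspondence breaks down, whereas \eqref{growth} must hold for \emph{all} $t>0$. To reach $t\gg T_E$ one must iterate: at each Ehrenfest horizon, re-decompose $u_\varepsilon(T_E)$ along the expanding stable foliation furnished by the Morse-Smale structure and propagate again, exploiting the recurrent hyperbolicity of $V$ to maintain $\mathcal{H}^\sigma$-coherence indefinitely. The tight scaling $\varepsilon\leq\delta_0\sigma^2$ and the slow rate $\delta_0\sigma$ strongly suggest that the effective semiclassical parameter must be coupled to $\sigma$ (for instance $h\sim\sigma$), so that the Ehrenfest losses and the weak Sobolev regularity balance out; maintaining this balance uniformly in the time-dependent perturbation $\varepsilon\mathcal{P}(t)$ over all times is the most delicate part of the argument.
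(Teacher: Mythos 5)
Your proposal takes a genuinely different route from the paper: you propagate a single coherent state along the cotangent lift and try to track its expansion, whereas the paper avoids following trajectories altogether by constructing a global escape (Lyapunov) function $a$ on $T^*M\setminus\{0\}$ (Proposition~\ref{prop.escape}) and running a positive-commutator argument on the quadratic functional $\mathcal{A}(t)=\langle\OpM(-\tilde a)u_\e(t),u_\e(t)\rangle$ (Lemma~\ref{lemma.growth.A}). Semiclassical wave packets appear in the paper too, but only at the very end (Lemmas~\ref{l.norm.initial.data} and \ref{l.check.criterium}) to verify that one can choose $u_\e(0)$ so that $\mathcal{A}(0)$ beats the $L^2$ error; once that single inequality~\eqref{e.criterium.growth} is checked, the exponential growth for all $t>0$ follows by integrating the differential inequality, with no propagation of the wave packet at all.

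The concrete gap in your argument is the step where you invoke ``a non-autonomous persistence result (stable manifold theorem for time-dependent small perturbations)'' to conclude that the cotangent trajectory issued from $(x_0,N\xi_0)$ keeps expanding $\xi_0$ at a rate $\lambda'\geq\lambda-C\e$ for all $t>0$. The perturbation $\e\cP(t)$ is bounded in time but does \emph{not} decay, and it need not vanish at $\Gamma_0$; its cumulative effect over times $t\sim\e^{-1}$ displaces the base point $x(t)$ by an $O(1)$ amount, taking it outside any fixed neighborhood of $\Gamma_0$ where the hyperbolic expansion rate is controlled. Once the trajectory leaves that neighborhood, it can approach a different critical element (e.g.\ a source), where the cotangent flow \emph{contracts} conormal covectors, and the local expansion rate $\lambda'$ simply fails to exist. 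A local stable-manifold theorem cannot repair this: the obstruction is global and is exactly what the Morse--Smale structure plus the escape function of Proposition~\ref{prop.escape} are designed to handle, by encoding the global attractor--repellor structure of the projectivized lifted flow (Proposition~\ref{prop.proj.flow}) into a single function whose derivative along $\cX_h$ is bounded below by $c_0\sigma\|\xi\|^\sigma$ outside a controlled conic set. Your iteration-past-Ehrenfest sketch does not address this global issue (and, incidentally, slightly misdiagnoses the difficulty: for a transport operator the propagator is a weighted pullback by a diffeomorphism, so there is no dispersive spreading in the Schr\"odinger sense; the genuine difficulty is the accumulated drift of the base point under $\e\cP(t)$, not an Ehrenfest horizon). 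As written, the argument establishes growth only up to times $t\lesssim\e^{-1}$, whereas~\eqref{growth} must hold for all $t>0$.

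One secondary point: your opening paragraph claims the ball of radius $r$ follows from ``linearity of the flow together with the a priori upper bound.'' That does not work directly; if $u_\e(0)=v_\e+w$, linearity gives $u_\e(t)=U(t)v_\e+U(t)w$, and the a priori bound only controls $\|U(t)w\|_\sigma$ from above by $C_\sigma e^{C_\sigma t}\|w\|_\sigma$, which grows faster than the lower bound $\delta_0 e^{\delta_0\sigma t}$ you want to preserve. The paper instead shows that the criterion $F(u_\e(0))<0$ in~\eqref{e.criterium.growth} is an open condition on the initial datum in $\cH^\sigma$, which yields the radius $r$ without any time-dependent comparison.
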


The class of Morse-Smale vector fields was introduced by Smale in~\cite{Smale1960} and it forms an open subset of $\fX(M)$ which contains an open and dense subset of all gradient vector fields. Yet note that the corresponding flow may have finitely many closed hyperbolic orbits that are not reduced to a point. We refer to Section~\ref{MS.section} for precise definitions and basic properties. In dimension $2$, the set of Morse-Smale vector fields is also dense in $\fX(M)$ thanks to Peixoto's Theorem~\cite{Peixoto1962}. Moreover, these vector fields are structurally stable, meaning that any small (time independent) perturbation of $V$ generates a flow which is topologically conjugate to the initial flow as shown by Palis and Smale~\cite{PalisSmale}.

The overall strategy in order to prove Theorem \ref{main.theorem} consists into constructing a Lyapunov (or escape) function for the lifted symplectic dynamical systems to the cotangent bundle $T^*M$ and into combining this construction with microlocal tools like G\aa{}rding inequality. Such a strategy yields the exponential growth result thanks to a positive commutator type argument. Hence the main difficulty when proving Theorem~\ref{main.theorem} is of dynamical nature and it lies in the construction of this escape function. The key dynamical ingredient to deal with this problem is issued from the microlocal point of view that was introduced in~\cite{Faure.Sjostrand} to study the correlation (or Ruelle) spectrum of dynamical systems with hyperbolic behavior and that was further developped in~\cite{DyatlovZworski2016, DyatlovGuillarmou, DR.MS1, BonthonneauWeich, Faure.Tsujii, Meddane} among others. See also~\cite{BaladiTsujii, FaureRoySjostrand} for earlier contributions for discrete time dynamics using microlocal methods and~\cite{Lefeuvre} for a recent book describing these microlocal methods and some of their applications to dynamical systems. 

One of the key steps when considering these spectral problems is also the existence of an escape function for the lifted symplectic dynamics  or the use of the related radial estimates. In that spectral set-up, this leads to the construction of (microlocal) functional spaces adapted to the spectral analysis of operators like $\mathcal{L}_V$ and we will adapt these constructions to fit the PDE problem at hand in this article. We emphasize that, in view of our application to Theorem~\ref{theorem.torus}, the time dependent perturbations we consider are quite strong as they do not decay with time. This may lead to strong bifurcations compared with the unperturbed equation. This requires to pay attention when adapting the dynamical results from these references and, in particular, it seems that the anisotropic Sobolev norms from the above works cannot be used directly to deal with our problem except for $\e=0$ or for perturbations that would tend fast enough to $0$ as $t\rightarrow\infty$. To circumvent these restrictions, we will more specifically revisit the dynamical constructions from~\cite{DR.MS1} which deals with the Morse-Smale case. See the introduction of Section~\ref{section.escape} for a more detailed comparison. Modulo some extra work, the method presented below could probably be adapted to deal with open hyperbolic dynamical systems as in \cite{DyatlovGuillarmou} or with general Axiom A flows satisfying appropriate transversality assumptions as in~\cite{Meddane} (see also~\cite{Faure.Sjostrand} for the particular case of Anosov flows).

\begin{remark}
Prior to these microlocal methods involving the construction of an escape function, other (more geometric) functional spaces adapted to hyperbolic dynamics were developed in~\cite{Liverani, ButterleyLiverani, ButterleyLiverani2} and it is most likely that the functional spaces constructed there could also provide informations on the instabilities of linear PDEs such as~\eqref{transport.eq}.
\end{remark}

In fact, Theorem~\ref{main.theorem} holds true for more general time dependent perturbations of Equation~\eqref{transport.eq} and we refer to Equation~\eqref{pseudo.transport} for a more general class of equations (including unbounded pseudodifferential perturbations of~\eqref{transport.eq}) to which Theorem~\ref{main.theorem} applies -- see Theorem~\ref{main.theorem.pseudo}. As we shall see, the exponential growth of the solution can be obtained as soon as an explicit microlocal criterium~\eqref{e.criterium.growth} is satisfied. Roughly speaking, initial data for which the Sobolev norms will grow exponentially will be highly oscillating in the conormal direction to the stable manifold. Somehow equivalently, we will pick initial data whose semiclassical wavefront set~\cite[Section 8.4]{Zworski} intersects this direction in a nontrivial way.

Finally, the study of the growth of Sobolev norms for time dependent perturbations of linear partial differential equations is motivated by its potential applications to nonlinear PDE. It would be interesting to understand if the dynamical and microlocal method used in this article could be adapted to deal with such problems. In that direction, we mention the recent paper \cite{MasperoMurgante} in which the method of constructing an escape function is extended to a one dimensional fractional \emph{quasilinear Schrödinger} equation, allowing to construct solutions undergoing growth of Sobolev norms. Closely related to the dynamical approach of the present work, the other recent work~\cite{Vlasov} uses similar methods to study the exponential stability of \emph{nonlinear} Vlasov equations on negatively curved manifolds.

\subsection{Comparison with the Schr\"odinger case} 
We conclude this introduction by briefly discussing the related question of Sobolev instabilities for time dependent perturbations of Schr\"odinger equations,
\begin{equation}\label{schrodinger}
i\partial_t u = -\Delta_g u +Q(t) u,
\end{equation}
where $Q(t)$ is smooth and real valued on $(M,g)$. From the perspective of quantum mechanics, this is a classical problem~\cite[Section I.2]{Bellisard} and the growth of Sobolev norms can be viewed as an indicator of instabilities at the quantum level. Indeed, in this physical setting, a growth for the Sobolev norms of solutions to~\eqref{schrodinger} amounts to say that the quantum state does not stay in a bounded region of phase space. As explained in~\cite{Bellisard}, this can be understood as the quantum analogue that an Hamiltonian flow has unbounded trajectories. For what concerns linear, time-dependent Schrödinger equations on the torus, the problem has been first issued in \cite{BourgainGSN1999} when $Q(t)$ is periodic in time. In that setting, Sobolev norms of solutions are of order $\mathcal{O}(\langle t\rangle^\e)$ for every $\e>0$, showing that escape in phase space can only occur at a very slow rate. See also~\cite{Bourgain1999} in the quasi-periodic case. This kind of results was further generalized in~\cite{Delort2010, Wang, EliassonKuksin, MasperoRobert, BambusiGrebertMasperoRobert}. In particular,~\cite{Delort2010, MasperoRobert, BambusiGrebertMasperoRobert} rely among other things on the assumption that the eigenvalues of the principal part of the operator are organized into clusters with increasing spectral gaps. Without this asymptotic spectral gap property, it was proved that there are solutions to~\eqref{schrodinger} whose Sobolev norms grow polynomially fast~\cite{Delort2014}. It is worth noting that~\cite{Delort2014} rather considers the harmonic oscillator $-\partial_x^2+x^2$ on the real line (instead of $\Delta_g$). In that case, spectral gaps are of size $1$ like the operator $\nu\cdot \nabla_x$ appearing in~\eqref{transport.torus}. Observe also that perturbations in~~\cite{Delort2014} are of pseudodifferential type with order $0$. See also~\cite{Mas19, LiangZhaoZhou, Thomann21, Mas22, FaouRaphael, Mas23, LangellaMasperoRotolo25} for further generalizations of these results in various contexts related to perturbations of the harmonic oscillator. 

\paragraph{Organization of the article.} We now quickly describe the structure of the article. As already alluded, the core of our proof is the construction of an escape function for the transport operator in the Morse-Smale case. This requires a detailed description of the Hamiltonian flow induced by its principal symbol. Thus we devote Section \ref{MS.section} to a brief reminder of Morse-Smale dynamics and their symplectic lifts and then Section \ref{section.escape} to the construction of the corresponding escape function. In Section \ref{sec.pseudo}, we review tools from microlocal analysis and apply them to the proof of (a slightly stronger version of) Theorem~\ref{main.theorem} by a positive commutator argument making use of the escape function from Section \ref{section.escape}. Once this is done, we prove Theorem~\ref{theorem.torus} in Section~\ref{sec.2D} by combining Theorem~\ref{main.theorem} with a normal form procedure. Finally, Appendix~\ref{Appendix.dyn} reviews a few definitions from dynamical systems theory that are used all along the article (especially in Sections~\ref{MS.section} and~\ref{section.escape}) and in Appendix \ref{a:existence} we prove global well posedness for the class of transport equations that we use in Section \ref{ss.general.transport}.

\paragraph{Acknowledgements} Most of this work was carried our when the second author was visiting the Laboratoire de math\'ematiques Jean Leray (Nantes Universit\'e) with the financial support of the Erasmus+ Traineeship project. The support of these institutions was essential to the realization of this work. MTR acknowledges the support of the INdAM group GNAMPA. GR was also partially supported by the Institut Universitaire de France, by the Agence Nationale de la Recherche through the grant ADYCT (ANR-20-CE40-0017) and by the Centre Henri Lebesgue (ANR-11-LABX-0020-01). Both authors wish to thank B. Langella and A. Maspero for pointing out this problem and for the useful discussions during the preparation of this work.

\section{Review of dynamical setting}\label{MS.section}
In this section, we recall the definition of the class of Morse-Smale vector fields $V$ on a smooth compact manifold $M$ and of their symplectic lift to the cotangent bundle $T^*M$. Along the way, we collect some of their properties that will be used in the following. More precisely, in paragraph~\ref{ss:morse-smale}, we review the definition of such vector fields. After that, we briefly recall in~ paragraph~\ref{ss:symplectic-lift} how to lift a vector field in a symplectic manner to $T^*M$ and we review in paragraph~\ref{MS.lift} some material from~\cite{DR.MS1} on the dynamics of symplectic lifts for Morse-Smale vector fields. 

\begin{remark}
We refer to \cite[Chapter 4]{palis1998} for a detailed introduction to Morse-Smale vector fields and flows on smooth manifolds and to \cite{DR.MS1}  for a detailed study of the lifted dynamics on the cotangent bundle. We collect in Appendix \ref{Appendix.dyn} standard definitions from the theory of dynamical systems that are used in this section. 
\end{remark}

\subsection{Morse-Smale flows}\label{ss:morse-smale}
We begin with the definition of the vector fields of interest for our analysis.
\begin{definition}[Morse-Smale vector fields]\label{def.MS}
    We say that a smooth vector field $V \in \fX(M)$ is Morse-Smale if 
    \begin{enumerate}
        \item $V$ has a finite number of critical elements (i.e. critical points and closed orbits), which are all hyperbolic (see Definition \ref{hyp.crit}); 
        \item for any pair $\Lambda_1, \Lambda_2$ of critical elements, the stable manifold $W^s(\Lambda_1)$ and the unstable one $W^u(\Lambda_2)$ (see Definition \ref{s.u.man}) intersect transversally; 
        \item the nonwandering set (see Definition \ref{nonwand.set}) $\operatorname{NW}(V)$ coincides with the union of the critical elements of $V$. 
    \end{enumerate}
We denote by $\fX^{MS}(M) \subset \fX(M)$ the set of Morse-Smale vector fields on $M$.  
\end{definition}
These flows were introduced by Smale in~\cite{Smale1960} and they are generalizations of Morse-Smale gradient flows. They are the simplest examples of Axiom A flows as later defined by Smale in~\cite{Smale1967}. Given a vector field $V \in \fX(M)$, we denote by $\varphi^t_V(x)$ its flow on $M$ with starting point $x \in M$: 
\begin{equation}\label{flow}
\frac{\di}{\di t} \varphi^t_V(x)=V(\varphi^t_V(x)), \quad \forall \ t \in \R. 
\end{equation}
We recall that, from compactness of the manifold $M$, the flow $\varphi^t_V(x)$ is complete, i.e., for each initial datum $x \in M$, the flow line $\varphi^t_V(x)$ exists globally in time -- see e.g. \cite[Corollary 9.17]{lee2003}.  
We denote by 
\begin{equation}\label{critical.set}
    \mbox{Crit}(V):= \{ \Lambda_1, \ldots, \Lambda_K\}
\end{equation}
the set of critical elements (basic sets) of $V$, which are either critical points or closed orbits. It will be important for the final steps\footnote{More precisely, this is used when we pick the initial data generating unstable solutions.} of our analysis that $n\geq 2$ or that $\mbox{Crit}(V) \neq M$ if $n=1$.

From the Definition \ref{def.MS} of Morse-Smale vector fields, one can prove the following properties: 
\begin{lemma}
Let $V \in \fX^{MS}(M)$. 
\begin{itemize}
    \item[(i)] For every $x \in M$ there exists a unique pair $(i, j)_x \in \{1, \ldots, K\}^2$ such that 
    \begin{equation}
        x \in W^s(\Lambda_i) \cap W^u(\Lambda_j), 
    \end{equation}
    (recall notation in \eqref{critical.set} and Definition \ref{s.u.man}). 

    \item[(ii)] The unstable manifolds of the critical elements of $V$ form a partition of $M$, precisely: 
    \[
    M= \bigcup_{i=1}^K W^u(\Lambda_i), \quad \mbox{ and } \quad W^u(\Lambda_i) \cap W^u(\Lambda_j)= \emptyset, \ \forall \ i \neq j. 
    \]
\end{itemize}
\end{lemma}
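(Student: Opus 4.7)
The plan is to reduce both parts of the statement to the behavior of the $\alpha$- and $\omega$-limit sets of a point $x\in M$ under the flow $\varphi_V^t$, and then read off the partition property from the uniqueness in (i).

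First I would fix $x\in M$ and consider its $\omega$-limit set $\omega(x)$. Since $M$ is compact, the forward orbit $\{\varphi_V^t(x):t\geq 0\}$ is relatively compact, so $\omega(x)$ is nonempty, compact, connected, and $\varphi_V^t$-invariant. Every point of $\omega(x)$ is nonwandering (a standard consequence of the definition of $\omega$-limit set), so by the third defining property of a Morse-Smale vector field,
\begin{equation*}
\omega(x)\subset \operatorname{NW}(V)=\bigcup_{k=1}^{K}\Lambda_k.
\end{equation*}
Now the critical elements $\Lambda_1,\dots,\Lambda_K$ are finitely many pairwise disjoint closed invariant sets, hence they are topologically separated. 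Because $\omega(x)$ is connected, it must lie entirely in a single $\Lambda_i$. I would then upgrade this inclusion to an equality: if $\Lambda_i$ is a hyperbolic fixed point, any nonempty invariant subset equals the point itself; if $\Lambda_i$ is a nontrivial closed hyperbolic orbit, it contains no fixed points and no proper closed invariant subset, so any nonempty compact invariant subset must again be all of $\Lambda_i$. This yields $\omega(x)=\Lambda_i$, and by Definition~\ref{s.u.man} this is equivalent to $x\in W^s(\Lambda_i)$. The index $i$ is unique because the $\Lambda_k$ are pairwise disjoint. Running the same argument with the reversed flow $\varphi_V^{-t}$ (whose critical elements are the same and which is itself Morse-Smale) produces the unique $j$ with $\alpha(x)=\Lambda_j$, equivalently $x\in W^u(\Lambda_j)$. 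This proves part (i).

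Part (ii) is now essentially formal. Existence of $j$ in (i) gives $M=\bigcup_{j=1}^{K} W^u(\Lambda_j)$. For disjointness, if $x\in W^u(\Lambda_i)\cap W^u(\Lambda_j)$ then $\alpha(x)=\Lambda_i=\Lambda_j$ by the uniqueness established in (i), so $i=j$.

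The only step that demands any real care is the equality $\omega(x)=\Lambda_i$ (and its $\alpha$-analogue): one has to know that a hyperbolic closed orbit admits no proper nonempty compact invariant subset, which I would invoke either directly from the periodic orbit structure or by citing \cite[Chapter~4]{palis1998}. Everything else is a clean application of the three defining properties of Morse-Smale together with the compactness of $M$.
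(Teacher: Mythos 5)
Your argument is correct and is the standard limit-set proof; the paper itself does not give one but simply cites Lemmas 3.3 and 3.4 of \cite{DR.MS1}, whose proofs follow essentially the same route (nonemptiness, compactness, connectedness, and invariance of $\alpha(x),\omega(x)$, combined with $\operatorname{NW}(V)=\bigcup_k\Lambda_k$ and the disjointness of the $\Lambda_k$).

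One remark on economy: the ``upgrade to equality'' step $\omega(x)=\Lambda_i$ is not needed. By Definition~\ref{s.u.man}, $x\in W^s(\Lambda_i)$ means precisely $\omega(x)\subset\Lambda_i$, and you already have this inclusion once you invoke connectedness of $\omega(x)$ together with the disjointness and closedness of the $\Lambda_k$. Uniqueness of $i$ then follows from $\omega(x)\neq\emptyset$ (which is where compactness of $M$ enters) and disjointness of the $\Lambda_k$; no minimality of the basic sets is required. Your phrasing of the equality step is also slightly misleading, since $\omega(x)=\Lambda_i$ is sufficient but not necessary for $x\in W^s(\Lambda_i)$ as defined in the paper. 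Also, for the $\alpha$-limit argument you do not need to assert that $-V$ is Morse-Smale: the nonwandering set is manifestly time-reversal symmetric (the definition uses $|t|>T_x$), so $\alpha(x)\subset\operatorname{NW}(V)$ holds directly, and the rest of the argument transfers verbatim. Everything else, including the reduction of (ii) to the existence and uniqueness in (i), is clean.
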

We refer for instance to \cite[Lemmas 3.3, 3.4]{DR.MS1} for the proof of these statements.

\paragraph{Energy function.}
We now report a result by Meyer \cite{Meyer}, that proves the existence of a function which is non-decreasing along the flow that we call \emph{energy function} following the literature. It will be a fundamental element for our construction in Section \ref{section.escape}.
\begin{lemma}[\cite{Meyer} Energy function]\label{Lemma.energy}
    Let $V \in \fX^{MS}(M)$. Then there exists a function $\mathcal{E} \in C^\infty(M)$ such that 
    \begin{equation}\label{eq.energy}
        \cL_V \mathcal{E} \geq 0, \quad \mbox{ everywhere on M, and } \quad \cL_V \mathcal{E} >0 \quad \mbox{ on } M \setminus \operatorname{Crit}(V).
    \end{equation}
\end{lemma}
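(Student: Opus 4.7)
The plan is to exploit the no-cycle structure of Morse-Smale flows, which follows from transversality together with the condition $\operatorname{NW}(V)=\operatorname{Crit}(V)$. Define a partial order on the finitely many critical elements by $\Lambda\prec\Lambda'$ whenever $W^u(\Lambda)\cap W^s(\Lambda')\neq\emptyset$ with $\Lambda\neq\Lambda'$; absence of cycles lets one extend it to a total order, relabel the critical elements as $\Lambda_1\prec\cdots\prec\Lambda_K$, and fix target values $c_1<\cdots<c_K$. The goal is to build a smooth $\mathcal{E}$ with $\mathcal{E}\equiv c_i$ on $\Lambda_i$ and with $\mathcal{L}_V\mathcal{E}>0$ along every heteroclinic orbit.

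The first concrete step is to build local Lyapunov functions near each critical element from hyperbolicity. At a hyperbolic fixed point $p=\Lambda_i$, the splitting $T_pM=E_p^s\oplus E_p^u$ yields local coordinates $(x_s,x_u)$ and an adapted inner product in which the quadratic model $q_i:=c_i+|x_u|^2-|x_s|^2$ satisfies $\mathcal{L}_V q_i\geq\kappa_i(|x_s|^2+|x_u|^2)$ on a sufficiently small neighborhood $U_i$ of $p$. For a hyperbolic closed orbit, one proceeds in a tubular neighborhood by combining a smooth strictly monotone coordinate along the orbit with a transverse quadratic form modeled on the linearized Poincaré return map.

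The global construction rests on a Smale filtration $\emptyset=M_0\subset M_1\subset\cdots\subset M_K=M$ by compact sets with $\varphi^t_V(M_i)\subset\operatorname{int}(M_i)$ for all $t>0$ and whose maximal invariant subset in $M_i\setminus M_{i-1}$ is $\Lambda_i$. Such a filtration exists precisely thanks to the no-cycle property of Morse-Smale vector fields, see \cite[Ch.~4]{palis1998}. One then defines $\mathcal{E}$ to equal $q_i$ on a smaller neighborhood $U_i'\Subset U_i$, to take the value $c_i$ on $M_i\setminus\bigl(M_{i-1}\cup\bigcup_{j>i}U_j'\bigr)$, and to interpolate smoothly in the transition layer using the flow time in finitely many flow-box charts covering the complement of $\bigcup_i U_i'$. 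A final perturbation, averaged along $\varphi^t_V$ on a bounded time window, promotes the non-strict inequality to the strict one outside $\operatorname{Crit}(V)$.

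The main obstacle is the gluing along heteroclinic connections: a naive partition-of-unity interpolation of the form $\mathcal{E}=\chi\mathcal{E}_1+(1-\chi)\mathcal{E}_2$ produces a cross term $(\mathcal{E}_1-\mathcal{E}_2)\mathcal{L}_V\chi$ of uncontrolled sign. The remedy, which is essentially Meyer's original idea, is that $M\setminus\bigcup_i U_i'$ is compact and consists of regular, non-recurrent orbits, hence admits a finite cover by flow boxes in which the $V$-flow has uniformly bounded transit time; choosing $\mathcal{E}$ to be a strictly increasing function of the flow coordinate in each box, and taking the spacings $c_{i+1}-c_i$ sufficiently large, one forces the positive monotonicity contribution to dominate the cross terms pointwise, which delivers the desired $\mathcal{L}_V\mathcal{E}>0$ off the critical set.
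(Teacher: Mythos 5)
The paper does not prove this lemma; it is stated as a black-box citation to Meyer's 1968 paper \cite{Meyer}, so there is no in-paper argument to compare your sketch against. Evaluated on its own terms, your global strategy (order the critical elements via the no-cycle property, build local Lyapunov functions from hyperbolicity, glue along a Smale filtration) is the right skeleton and is consistent with Meyer's approach. However, there is a concrete error in the closed-orbit case.

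You propose, near a hyperbolic closed orbit $\Lambda$, to build the local piece by ``combining a smooth strictly monotone coordinate along the orbit with a transverse quadratic form.'' No smooth real-valued function on a circle is strictly monotone, so such a coordinate does not exist; and more to the point, the lemma's own conclusion forces $\mathcal{L}_V\mathcal{E}\equiv 0$ on $\Lambda$. Indeed $\Lambda\subset\operatorname{Crit}(V)$ is a periodic orbit, $\mathcal{E}$ is single-valued, so $\int_0^{T_0}\mathcal{L}_V\mathcal{E}(\varphi^t_V(x_0))\,dt=\mathcal{E}(\varphi^{T_0}_V(x_0))-\mathcal{E}(x_0)=0$, which combined with $\mathcal{L}_V\mathcal{E}\geq 0$ gives $\mathcal{L}_V\mathcal{E}=0$ pointwise on $\Lambda$. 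The correct local model is therefore a function that is \emph{constant} (equal to $c_i$) on $\Lambda$, with a $\theta$-dependent adapted quadratic form $Q_\theta(y)=|y_u|^2_\theta-|y_s|^2_\theta$ on the normal bundle $N^s\oplus N^u$ over $\Lambda$ chosen smoothly and flow-equivariantly so that $\mathcal{L}_V(c_i+Q)\geq\kappa|y|^2$ transversally; there is no longitudinal monotonicity term at all. Fixing this would make your sketch compatible with the statement; the remaining gluing steps (flow-box interpolation, choice of spacings, and the final averaging) are still underspecified — in particular the averaged perturbation does not automatically give strict positivity on the open set where you first set $\mathcal{E}\equiv c_i$, since there the one-sided flow image may remain inside the same constant level region — but they are the genuinely technical core of Meyer's proof rather than outright errors.
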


\begin{remark}
 In the case where $V$ is a gradient vector field associated with a  Morse function $f$, such a function is given by $f$ itself.
\end{remark}

\paragraph{Morse-Smale vector fields on surfaces.}
The case of Morse-Smale vector fields on compact surfaces (i.e. $n=2$) is peculiar and further motivates the choice of such class of vector fields. Indeed, we first remark that, in this case, the definition of Morse-Smale vector fields can be simplified (see for example \cite[Prop. 1.1, p.122]{palis1998}) as the second condition in Definition \ref{def.MS} can be replaced by 
\begin{itemize}
    \item[2.s] There are no trajectories connecting saddle points \footnote{We call saddle point an hyperbolic critical point $\Lambda$ in the sense of Definition \ref{hyp.crit}, such that the eigenvalues $\lambda_1, \lambda_2$ of $\di V(\Lambda)$ satisfy Re$(\lambda_1)>0$ and Re$(\lambda_2)<0$.}.  
\end{itemize}
In particular, condition 2.s makes the transversality condition of Definition \ref{def.MS} more explicit and easier to visualize. \begin{remark}\label{generic.2d}
    The set $\fX^{MS}(M)$ is generic in $\fX(M)$ (i.e., it is open and dense with respect to the $C^\infty$ topology), for $M$ smooth, compact manifold of dimension two. This important property of Morse-Smale vector fields has been proved by Peixoto in \cite[Theorem 2]{Peixoto1962}. We will use this result in Section \ref{sec.2D} in order to prove Theorem \ref{theorem.torus}. In higher dimension, it is still an open set which is contained in the larger class of Axiom A flows (with appropriate transversality properties). However, Axiom A flows are not generic in higher dimensions even if they are structurally stable. 
\end{remark}

\subsection{Symplectic lift on the cotangent bundle}\label{ss:symplectic-lift}
From now on, we will use the notation $T^*M\setminus \{ 0 \} := \{ (x, \xi) \in T^*M : \xi \neq 0 \}$
where $T^*M$ is the cotangent bundle of $M$. The main dynamical object considered in this work is the Hamiltonian flow of 
\begin{equation}\label{h}
    h: T^*M  \to \R, \quad h(x, \xi):= \xi (V(x)).
\end{equation}
It turns out that the dynamics of such flow is linked to that of the flow of $V(x)$ on $M$ and, in this paragraph, we make this connection precise. 

We denote by $\omega(\cdot, \cdot)$ the canonical symplectic form on $T^*M$ and by $\cX_h$ be the Hamiltonian vector field defined by $h$ (see \eqref{h}) through the relation
\begin{equation}\label{ham.vf}
    \di h (Y) = \omega(\cX_h, Y), \quad \forall \ Y \in \fX(M).
\end{equation} 
The upcoming definitions and results in this paragraph hold for any vector field, not necessarily Morse-Smale and we denote by $\pi:(x,\xi)\in T^*M\mapsto x\in M$ the canonical projection.

\begin{definition}[Symplectic lift]\label{def.lift}
    Let $V \in \fX(M)$ and let $\varphi^t(x)$ be a flow on $M$. 
    \begin{itemize}
        \item[I)] The symplectic lift of $V$ to $T^*M$ is the vector field $Y \in \fX(T^*M)$ such that 
        \begin{enumerate}
            \item[a)] $\di\pi(Y)=V$; 
            \item[b)] the vector field $Y$ satisfies $\cL_Y \omega =0.$
        \end{enumerate}
    
    \item[II)] The symplectic lift of a
    flow $\varphi^t$ on  $M$ is the flow $\Phi^t$ on $T^*M$ such that 
    \begin{enumerate}
        \item[a)] $\pi\circ \Phi^t=\varphi^t\circ\pi$, for all $t\in\mathbb{R}$, as maps on $T^*M$; 
        \item[b)]
        $
        \left(\Phi^t \right)^* \omega= \omega, \quad \forall t \in \R,
        $
    \end{enumerate}
    where $^*$ denotes the pullback of $\omega$ with respect to $\Phi^t$.
    \end{itemize}
\end{definition}
In order to write down explicitly the relation between a flow on $M$ and its symplectic lift on $T^*M$, we recall the following definition. 
\begin{definition}[Transposed operator]\label{def.transp}
  Let $\Phi: TM \to TM$ and set $\pi_0:(x,v)\in TM\mapsto x\in M$. We define the transposed map $\Phi^\top$ as 
  \begin{equation}
   \Phi^\top: T^*M \to T^*M, \quad \Phi^\top(\xi)(\mathrm{w}):=\xi(\Phi(\mathrm{w})), \quad \forall \xi \in {T^*_{\pi_0\circ\Phi(\mathrm{w})}M}, \ \mathrm{w} \in TM.  
  \end{equation}
\end{definition}

\noindent The following Lemma explicits the relation between a vector field $V(x)$ and the Hamiltonian vector field $\cX_h$ on $T^*M$, as defined in~\eqref{h} and \eqref{ham.vf}. We only state the next result, since  the proof is standard and follows directly applying Definitions \ref{def.lift} and \ref{def.transp}. 
\begin{lemma}\label{lemma.lift}
    For every $V \in \fX(M)$, the vector field $\cX_h$ defined by \eqref{ham.vf} is the symplectic lift of $V$ on $T^*M$. Moreover, for every flow $\varphi^t(x)$ on $M$, its symplectic lift is given by 
    \begin{equation}\label{flow.lift}
        \Phi^t(x, \xi)= \left(\varphi^t(x), \left[\di_x\varphi^t(x)\right]^{-\top} \xi \right),
    \end{equation}
    where $\left[\di_x\varphi^t(x)\right]^{-\top}$ is the inverse of the transposed operator of $\di_x \varphi^t(x)$, as in Definition \ref{def.transp}. 
\end{lemma}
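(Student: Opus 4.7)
The plan is to address the two claims of Lemma \ref{lemma.lift} in succession. For the first, I would work in local Darboux coordinates $(x,\xi)$ on $T^*M$, in which $h(x,\xi)=\sum_i \xi_iV_i(x)$ and the Hamiltonian vector field associated with $\omega$ through \eqref{ham.vf} takes the standard form
\[
\cX_h=\sum_i V_i(x)\,\partial_{x_i}-\sum_{i,j}\xi_j(\partial_{x_i}V_j(x))\,\partial_{\xi_i}.
\]
Reading off the horizontal component immediately gives $\di\pi(\cX_h)=V$, which is condition (a) of Definition \ref{def.lift}(I). Condition (b), $\cL_{\cX_h}\omega=0$, follows at once from Cartan's magic formula together with $\iota_{\cX_h}\omega=\di h$ (equivalent to \eqref{ham.vf}) and $\di\omega=0$, requiring no further coordinate computation.

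For the explicit formula \eqref{flow.lift}, the cleanest route is to verify that the map
\[
\Phi^t:(x,\xi)\longmapsto\bigl(\varphi^t(x),\,[\di_x\varphi^t(x)]^{-\top}\xi\bigr)
\]
preserves the tautological one-form $\alpha$ on $T^*M$, whence $(\Phi^t)^*\omega=(\Phi^t)^*\di\alpha=\di\alpha=\omega$. Concretely, for any $W\in T_{(x,\xi)}(T^*M)$,
\[
((\Phi^t)^*\alpha)(W)=\alpha_{\Phi^t(x,\xi)}(\di\Phi^t\,W)=\bigl([\di_x\varphi^t(x)]^{-\top}\xi\bigr)\bigl(\di\varphi^t\,\di\pi(W)\bigr)=\xi(\di\pi(W))=\alpha_{(x,\xi)}(W),
\]
where the third equality uses precisely the defining property of the inverse transpose from Definition \ref{def.transp}. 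Condition (a) of Definition \ref{def.lift}(II) is the trivial identity $\pi\circ\Phi^t=\varphi^t\circ\pi$, so $\Phi^t$ is \emph{a} symplectic lift of $\varphi^t$ in the sense of the definition.

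To conclude that $\Phi^t$ is the flow of $\cX_h$ (and hence \emph{the} symplectic lift of $V$), I would differentiate at $t=0$ and compare with the coordinate expression above. The horizontal component clearly yields $V(x)$, and differentiating the identity $\di_x\varphi^t\circ[\di_x\varphi^t]^{-1}=\mathrm{Id}$ at $t=0$ (where $\di_x\varphi^0=\mathrm{Id}$ and $\partial_t|_{t=0}\di_x\varphi^t=\di V(x)$) gives $\partial_t|_{t=0}[\di_x\varphi^t]^{-\top}\xi=-[\di V(x)]^{\top}\xi$, which matches the vertical component of $\cX_h$ read off from the coordinate formula. Uniqueness of solutions to the ODE $\dot\gamma=\cX_h(\gamma)$ then identifies $\Phi^t$ with the Hamiltonian flow of $h$. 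The sole technical hurdle in the entire argument is the bookkeeping of transposes and inverses; the symplectic content collapses to Cartan's formula together with the invariance of the tautological one-form under any cotangent lift of a diffeomorphism.
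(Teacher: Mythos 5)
The paper does not actually spell out a proof of this lemma; it merely remarks that the statement is standard and follows from Definitions \ref{def.lift} and \ref{def.transp}. Your proof supplies exactly the details the authors are gesturing at, and it is correct. The two parts are cleanly separated: the Darboux-coordinate computation of $\cX_h$ checks both conditions of Definition \ref{def.lift}(I) (with Cartan's formula dispatching the closedness condition without further computation), and the verification that the candidate $\Phi^t$ preserves the tautological one-form $\alpha$ is the tidiest way to see that it is a symplectomorphism---you correctly locate the role of Definition \ref{def.transp} in the identity $\bigl([\di_x\varphi^t]^{-\top}\xi\bigr)\bigl(\di\varphi^t\,v\bigr)=\xi(v)$. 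The final differentiation at $t=0$ is also correct: $\partial_t\big|_{t=0}[\di_x\varphi^t]^{-\top}\xi=-[\di V(x)]^\top\xi$ indeed matches the vertical component $-\sum_{i,j}\xi_j\partial_{x_i}V_j\,\partial_{\xi_i}$ of $\cX_h$.

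One small point worth making explicit: to upgrade the equality of velocities at $t=0$ to the identity $\tfrac{\di}{\di t}\Phi^t=\cX_h\circ\Phi^t$ for all $t$ (and thus invoke uniqueness of integral curves), you should observe that $\Phi^t$ as defined by \eqref{flow.lift} is itself a one-parameter group; this follows from the flow law $\varphi^{s+t}=\varphi^s\circ\varphi^t$ together with the chain rule $\di_x\varphi^{s+t}=\di_{\varphi^t(x)}\varphi^s\circ\di_x\varphi^t$ and the contravariance of $(\cdot)^{-\top}$. Once that is noted, the argument is complete.
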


In particular remark that, as an immediate consequence of Lemma \ref{lemma.lift}, the flow of the Hamiltonian vector field $\cX_h$ on $T^*M$, with $h$ in \eqref{h}, coincides with the lift of the flow $\varphi^t_V$. Thus we will use the notation $\Phi^t_V(x, \xi)$ for such flow, whose explicit expression is the one in \eqref{flow.lift}.

\subsection{Dynamics of Morse-Smale symplectic lifts}\label{MS.lift}
In this paragraph, we take $V \in \fX^{MS}(M)$ and we describe the dynamics of the Hamiltonian flow induced by the Hamiltonian vector field $\cX_h$, with $h: T^*M  \to \R$ defined in \eqref{h}. In particular, we describe global attractors and repellors for the lifted flow on $T^*M $ and some of their properties, using that $V$ is Morse-Smale.
We closely follow \cite{DR.MS1}, to which we refer for the proof of most results. 

Let $S^*M$ denote the unit cotangent bundle of $M$: 
\begin{equation}\label{S*M}
S^*M := \{ (x, \xi ) \in T^*M \ : \ \norm{\xi}_x= 1\},
\end{equation}
where $\norm{\cdot}_x$ is the norm on the cotangent space at the point $x$, $T^*_xM$,  induced by the metric $g_x$. 
The flow $\Phi^t_V$ of $\cX_h$ on $T^*M$, which is explicitly given in \eqref{flow.lift}, induces a projected flow on $S^*M$: 
\begin{equation}\label{proj.flow}
    \tilde{\Phi}_V^t(x,\xi):= \left( \varphi^t_V(x), \frac{\left[\di_x\varphi^t_V(x)\right]^{-\top} \xi}{\norm{ \left[\di_x\varphi^t_V(x)\right]^{-\top} \xi}_x}\right),
\end{equation}
and we denote by $\tilde{\cX}_h$ the induced smooth vector field on $S^*M$, which is the generator of $\tilde{\Phi}^t_V$.

We now identify global attractors and repellors for such flows on $S^*M$. To this aim, we start with the general definition of \emph{conormal bundle} of a submanifold. Let $S \subset M$ be a smooth submanifold of $M$. We define the submanifold $N^*S$ of $T^*M$ as 
\begin{equation}\label{conormal}
N^*S:= \{ (x, \xi) \in T^*M \mbox{ such that } x \in S, \ \xi \neq 0, \ \xi(\mathrm{w})=0, \ \forall\  \mathrm{w} \in T_xS\},
\end{equation}
and we call it the conormal bundle of $S$. We define the following sets: 
\begin{equation}\label{A.R}
    R_p:= \bigcup_{i=1}^K N^*(W^s(\Lambda_i)) \cap S^*M, \quad A_p:= \bigcup_{i=1}^K N^*(W^u(\Lambda_i)) \cap S^*M. 
\end{equation}
Next we recall that every stable and unstable manifold is foliated by a family of submanifolds (see Definition \ref{def.foliation}): 
\begin{equation}\label{uu.ss}
    W^u(\Lambda_i)= \bigcup_{x \in \Lambda_i} W^{uu}(x), \qquad W^s(\Lambda_i)= \bigcup_{x \in \Lambda_i} W^{ss}(x), \quad \forall 1 \leq i \leq K. 
\end{equation}
On the one hand, this foliation is trivial for critical points. On the other hand, for every $x \in \Lambda_i$ with $\Lambda_i$ closed orbit, $W^{ss}(x)$ has codimension one in $W^s(\Lambda_i)$ and analogously $W^{uu}(x)$ in $W^u(\Lambda_i)$. As we shall see, they also turn out to be relevant for the dynamics. Hence, we define 
\begin{equation}\label{A.a,R.r}
    R^r_p:= \bigcup_{i=1}^K \bigcup_{x \in \Lambda_i} N^*(W^{ss}(x)) \cap S^*M, \quad A^a_p:= \bigcup_{i=1}^K \bigcup_{x \in \Lambda_i} N^*(W^{uu}(x)) \cap S^*M. 
\end{equation}
\begin{remark}
    The transversality assumption in Definition \ref{def.MS} can be written explicitely as follows: for any $(i, j) \in \{ 1, \ldots, K\}^2$, for any $x \in W^s(\Lambda_i) \cap W^u(\Lambda_j)$ we have 
    \begin{equation}\label{transversality}
        T_xM=T_xW^s(\Lambda_i) + T_xW^u(\Lambda_j).
    \end{equation}
        Using this decomposition of $T_xM$, one can verify that transversality implies
    \begin{equation}\label{intersections}
        R_p \cap A_p= \emptyset, \quad R_p \cap A^a_p = \emptyset, \quad R^r_p \cap A_p= \emptyset. 
    \end{equation}
\end{remark}

 We are now in position to describe the dynamics of $\tilde{\Phi}^t_V(x, \xi)$  in \eqref{proj.flow} on $S^*M$. 
\begin{proposition}[\cite{DR.MS1}, Lemma 5.1, Theorem 5.2]\label{prop.proj.flow}
Let $V \in \fX^{MS}(M)$. Then, the sets $R_p$, $A_p$,  $R^r_p$, $A^a_p$ in \eqref{A.R} and \eqref{A.a,R.r} are compact subsets of $S^*M$. Moreover, denoting by $\bd$ the induced Riemannian distance on $S^*M$, the following holds: 
\begin{equation}
    \mbox{ for every }(x, \xi) \in S^*M \setminus A_p \quad \mbox{ we have } \quad \lim_{t \to -\infty} \bd(\tilde{\Phi}^t_V(x, \xi), R^r_p) =0, 
\end{equation}
\begin{equation}
    \mbox{ for every }(x, \xi) \in S^*M \setminus R^r_p \quad \mbox{ we have } \quad \lim_{t \to +\infty} \bd(\tilde{\Phi}^t_V(x, \xi), A_p) =0, 
\end{equation}
and 
\begin{equation}
    \mbox{ for every }(x, \xi) \in S^*M \setminus R_p \quad \mbox{ we have } \quad \lim_{t \to +\infty} \bd(\tilde{\Phi}^t_V(x, \xi), A^a_p) =0, 
\end{equation}
\begin{equation}
    \mbox{ for every }(x, \xi) \in S^*M \setminus A^a_p \quad \mbox{ we have } \quad \lim_{t \to -\infty} \bd(\tilde{\Phi}^t_V(x, \xi), R_p) =0. 
\end{equation}

\end{proposition}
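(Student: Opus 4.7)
The plan has three main components: compactness of the four sets, their invariance under the lifted flow, and the attraction/repulsion statements. Since $\operatorname{Crit}(V)$ is finite and $S^*M$ is compact, each of $R_p, A_p, R_p^r, A_p^a$ is bounded, so compactness reduces to closedness in $S^*M$.

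\textbf{Compactness.} Consider a sequence $(x_n,\xi_n) \in N^*(W^s(\Lambda_i)) \cap S^*M$ with $(x_n,\xi_n) \to (x,\xi) \in S^*M$. For Morse-Smale flows, the closure of $W^s(\Lambda_i)$ in $M$ is a union of stable manifolds $W^s(\Lambda_j)$ of critical elements ordered below $\Lambda_i$, so $x \in W^s(\Lambda_j)$ for some $j$. The $\lambda$-lemma combined with the transversality assumption \eqref{transversality} yields that $T_{x_n} W^s(\Lambda_i)$ converges, in the Grassmannian, to a subspace containing $T_x W^s(\Lambda_j)$; since $\xi_n$ annihilates $T_{x_n} W^s(\Lambda_i)$, the limit $\xi$ annihilates $T_x W^s(\Lambda_j)$, placing $(x,\xi) \in R_p$. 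The same strategy adapts to $A_p$ (reversing time), and to $R_p^r, A_p^a$ by using the foliations \eqref{uu.ss}, together with the observation that, along a closed orbit $\Lambda_i$, the strong (un)stable leaves vary smoothly and their closure is controlled by the same combinatorial order on $\operatorname{Crit}(V)$.

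\textbf{Invariance.} By Lemma \ref{lemma.lift}, $\Phi^t_V$ acts on fibers by the inverse transpose of $d_x\varphi^t_V$. Since $W^s(\Lambda_i)$ is $\varphi^t_V$-invariant, $d_x\varphi^t_V$ sends $T_xW^s(\Lambda_i)$ onto $T_{\varphi^t_V(x)}W^s(\Lambda_i)$, so the inverse transpose preserves the annihilator bundle. The same argument applies to $W^u(\Lambda_i)$ and, using that $W^{ss}(x)$ (resp.\ $W^{uu}(x)$) is sent to $W^{ss}(\varphi^t_V(x))$ (resp.\ $W^{uu}(\varphi^t_V(x))$), to $R_p^r$ and $A_p^a$. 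Normalization commutes with this action, giving $\tilde\Phi^t_V$-invariance on $S^*M$.

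\textbf{Attraction/repulsion.} It suffices to prove the second statement (convergence to $A_p$ from $S^*M \setminus R_p^r$ as $t \to +\infty$); the other three follow by time reversal and by swapping the roles of $W^{ss}$ and $W^{uu}$. Fix $(x,\xi) \in S^*M \setminus R_p^r$. The energy function of Lemma \ref{Lemma.energy} and the Morse-Smale property force $\varphi^t_V(x) \to \Lambda_i$ for some unique critical element $\Lambda_i$. For $t$ large the trajectory enters an arbitrarily small neighborhood of $\Lambda_i$ in which the flow is hyperbolically conjugate (Hartman--Grobman for a fixed point, Floquet normal form for a closed orbit) to its linearization. In these coordinates the lifted action $[d_x \varphi^t_V]^{-\top}$ exponentially expands the codirection conormal to the strong stable leaf through $x$ and contracts the complementary codirections; the hypothesis $(x,\xi) \notin R_p^r$ ensures that $\xi$ has a nonzero component in an expanded direction, and after normalization $\tilde\Phi^t_V(x,\xi)$ aligns with $N^*(W^u(\Lambda_i)) \cap S^*M \subset A_p$. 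Uniformity of the convergence, and hence the distance statement, follows from compactness of $S^*M$, finiteness of $\operatorname{Crit}(V)$, and the continuity of the local conjugacies.

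\textbf{Main obstacle.} The delicate point is Step 3 at closed hyperbolic orbits, where the normal form is only Floquet-type and one must separately handle the flow direction: this is precisely where the distinction between $R_p/R_p^r$ (and dually $A_p/A_p^a$) becomes operative, since conormality to $W^s(\Lambda_i)$ is strictly stronger than conormality to a single leaf $W^{ss}(x)$. Ensuring that generic covectors not lying in $R_p^r$ are indeed captured by $A_p$ (and not merely by $A_p^a$) requires the $\lambda$-lemma argument to be applied with care to the transversal intersections guaranteed by Definition \ref{def.MS}.
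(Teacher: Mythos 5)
The paper does not prove this proposition: it is quoted as \cite[Lemma~5.1, Theorem~5.2]{DR.MS1} (with the note that Meddane later removed the $\mathcal{C}^1$-linearizability hypothesis used there), and the paper immediately defers to those references. So there is no internal proof to compare your sketch against, but the sketch has two genuine gaps on its own terms.

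First, your reduction of the four limit statements to a single one does not work. You claim (c)--(d) follow from (b) ``by time reversal and by swapping the roles of $W^{ss}$ and $W^{uu}$.'' Since $R^r_p \subset R_p$, a point outside $R_p$ is already outside $R^r_p$, so (b) gives convergence to $A_p$; statement (c) is the \emph{strictly stronger} conclusion that the orbit converges to the smaller set $A^a_p$. No formal duality exchanges the coarse conormal bundles $N^*W^{s/u}$ with the fine ones $N^*W^{ss/uu}$: near a closed orbit, the flow direction contributes an extra tangent direction in $W^{s}(\Lambda_i)$ not present in $W^{ss}(x)$, and showing that the normalized covector shakes off the corresponding codirection requires a separate analysis (it is exactly the ``Floquet'' difficulty you flag in your last paragraph, but you cannot simultaneously use it to justify skipping statements (c)--(d)). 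Only the time-reversal pairs -- (a) from (b), and (d) from (c) -- come for free.

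Second, the compactness argument is the crux (the paper explicitly says so), and your treatment via the $\lambda$-lemma is too thin. The inclination lemma, in its standard form, concerns accumulation of transverse disks onto unstable manifolds under forward iteration; what you need is control of the limit of the tangent planes $T_{x_n}W^s(\Lambda_i)$ as $x_n$ approaches a point $x \in W^s(\Lambda_j) \cap W^u(\Lambda_k)$ of a \emph{lower} stratum. This may involve a chain of several heteroclinic connections, and one has to propagate transversality and uniform estimates along the whole chain; moreover one must guard against non-convergence of the Grassmannian sequence by working with all subsequential limits. Simply asserting ``the $\lambda$-lemma combined with transversality yields'' the containment of $T_xW^s(\Lambda_j)$ in the limiting plane elides precisely the part of the argument that occupies several sections of \cite{DR.MS1} and \cite{Meddane}. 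The invariance step, by contrast, is fine and essentially immediate from Lemma~\ref{lemma.lift}.

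Your general structure (compactness, invariance, attraction via local hyperbolicity) is the right one and matches the strategy in the cited references, but as written the proposal would not close either of the two points above.
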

We refer to \cite{DR.MS1} for the proof of this statement, which involves a careful study of the Morse-Smale structure of the flow induced by $V$ on $M$. Strictly speaking, the proof in that reference is given under the extra assumption that the vector field is $\mathcal{C}^1$-linearizable near critical elements. Yet, this hypothesis is removed by Meddane in~\cite[Lemma 3.2, Prop. 3.3]{Meddane} where he deals with general Axiom A flows satisfying the corresponding transversality property. We emphasize that one of the main technical issue in proving this statement is the compactness property. Finally, for later purpose, we define: 
\begin{equation}\label{A.R.TM}
    A:= \bigcup_{i=1}^K N^*(W^s(\Lambda_i)), \qquad R:=\bigcup_{i=1}^K N^*(W^u(\Lambda_i)).
\end{equation}
Analogously we define $A^a$ and $R^r$ as subsets of $T^*M \setminus \{ 0 \}$, obtained extending by homogeneity the projective ones in \eqref{A.a,R.r}.

\section{Construction of the escape function}\label{section.escape}
We are now in position to construct the main dynamical ingredient for the proof of Theorem \ref{main.theorem}: an \emph{escape (or Lyapunov) function} for the Hamiltonian $h(x,\xi)$ defined in \eqref{h}. Roughly speaking, we are looking for a function that increases along the flow lines of $\Phi_V^t$ in view of applying a positive commutators argument in our analysis of the transport equation. Before stating the main dynamical result we are aiming at, we start by giving a general definition of escape functions in our context.

\begin{definition}[Positively homogeneous function]\label{def.pos.homo}
    We say that a smooth function $f: T^*M \setminus \{ 0 \} \to \C $ is a positively homogeneous function of degree $\sigma \geq 0$ if 
    \[
    f(x, \lambda \xi)= \lambda^\sigma f(x, \xi), \quad \forall \lambda >0, \ (x, \xi) \neq 0.
    \]
\end{definition}

\begin{definition}[Escape function]\label{def.escape}
    Let $h: T^*M\to \R$ be the Hamiltonian function defined in~\eqref{h}. We say that a smooth function $a: T^*M \setminus \{ 0 \} \to \R$ is an escape function of order $\sigma > 0$ for $h$ if $a$ is positively homogeneous of degree $\sigma > 0$ and if there exist $\delta >0$ and a closed conic set $E \subset T^*M\setminus\{0\}$ such that
    \begin{equation}\label{bound.escape}
        \cX_h (a) \geq \delta \norm{\xi}_x^\sigma, \quad \mbox{ on } \{ (x, \xi) \in T^*M \setminus E \}, 
        \end{equation}
    \begin{equation}\label{bound.escape.2}
        \cX_h(a) \geq 0 \mbox{ and } a(x,\xi) \geq \delta \norm{\xi}^\sigma_x, \quad \mbox{ on } \{(x, \xi) \in E  \}, 
    \end{equation}
    where we recall the notation $\cX_h$ in \eqref{ham.vf} for the Hamiltonian vector field defined by $h$. 
\end{definition}

\begin{remark}
    We remark that, if $h$ is a positively homogeneous function of degree $\sigma_1$ and $a$ is a positively homogeneous function of degree $\sigma_2$, then $\cX_h(a)$ is a positively homogeneous function of degree $\sigma_1+\sigma_2-1$. In particular this means that, in Definition \ref{def.escape}, both $\cX_h(a)$ and $a$ are homogeneous of degree $\sigma$. 
\end{remark}

The main result of this section is the following Proposition:
\begin{proposition}\label{prop.escape}
    Let $V \in \fX^{MS}(M)$ be a Morse-Smale vector field on $M$ and let $h(x, \xi):= \xi(V(x))$. Then there exist a neighborhood $\mathcal{U}$ of $\operatorname{NW}(V)$ (see \ref{nonwand.set}), $c_0>0$ and $0< \sigma_0 <1$, depending on $V$ such that, for every $0<\sigma<\sigma_0$, $h$ admits an escape function $a$ of order $\sigma$ in the sense of Definition \ref{def.escape}, with $E$ being a closed conic neighborhood of $A^a \cap R^r\cap T^*\mathcal{U}$ and with $\delta= c_0\sigma$.  Moreover, if $n\geq 2$ or if $V$ has at least one critical point, there exists a nonempty open conic subset $C$ satisfying 
    \begin{equation}\label{negative.escape}
        a(x,\xi)\leq -\frac{c_0}{2}\|\xi\|_x^\sigma\ \text{on}\ \{(x,\xi)\in C \}.
    \end{equation}
\end{proposition}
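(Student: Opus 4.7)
The plan is to build $a$ in the homogeneous form
\[
a(x,\xi) := \|\xi\|_x^{\sigma}\, m(x,\xi/\|\xi\|_x),
\]
where $m\colon S^*M\to\R$ is a smooth bounded function to be constructed. Using the identity $\mathcal{X}_h(\|\xi\|_x^{\sigma}) = \sigma\|\xi\|_x^{\sigma}\beta(x,\xi/\|\xi\|_x)$, where $\beta$ is the zero-homogeneous log-expansion cocycle of the symplectic lift $\Phi_V^t$ along the cotangent fiber, a direct computation yields
\[
\mathcal{X}_h(a) = \|\xi\|_x^{\sigma}\bigl[\sigma\,\beta\,m + \tilde{\mathcal{X}}_h(m)\bigr](x,\xi/\|\xi\|_x).
\]
The escape inequalities of Definition~\ref{def.escape} then translate into explicit sign conditions on $m$, on $\tilde{\mathcal{X}}_h(m)$, and on $\beta m$; the factor $\sigma$ in $\delta=c_0\sigma$ emerges naturally from the regions where $\tilde{\mathcal{X}}_h(m)$ vanishes and only the leading $\sigma\beta m$ contributes to monotonicity.

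\textbf{Construction of $m$.} I would build $m$ by combining two ingredients dictated by the dynamics recalled in Section~\ref{MS.section}. First, Meyer's energy function from Lemma~\ref{Lemma.energy}, pulled back to $S^*M$, provides a smooth function with $\tilde{\mathcal{X}}_h(\mathcal{E})\geq 0$ and strictly positive off $\operatorname{Crit}(V)$; a suitable multiple of $\mathcal{E}$ serves as a ``bulk'' contribution ensuring $\tilde{\mathcal{X}}_h(m)\geq c_1>0$ away from a small conic neighborhood of $\operatorname{NW}(V)$. Second, near $\operatorname{NW}(V)$ one needs fiber-dependent monotonicity, and here I would follow the construction of~\cite{DR.MS1}: local escape functions near each critical element are built from the attractor/repellor structure of Proposition~\ref{prop.proj.flow}, using finite-time averages of suitable symbols along the projected flow $\tilde\Phi_V^t$ together with the facts that $\beta>0$ on a conic neighborhood of $A^a_p$ and $\beta<0$ on a conic neighborhood of $R^r_p$. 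Gluing these local pieces via cut-offs supported in prescribed conic neighborhoods of $A^a_p\cup R^r_p$, one obtains $m$ with $m\geq M_+>0$ on a small closed conic neighborhood $E_p$ of $A^a_p\cap R^r_p\cap S^*\mathcal{U}$, while $m\leq -M_-<0$ on an open subset of $R^r_p\setminus A^a_p$.

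\textbf{Verification and negative cone.} Let $E$ be the conic extension of $E_p$. On $E$, since $\beta>0$ and $m\geq M_+$, we get $\sigma\beta m + \tilde{\mathcal{X}}_h(m)\geq 0$ and $a \geq M_+\|\xi\|_x^\sigma \geq c_0\sigma\|\xi\|_x^\sigma$ for $\sigma$ small. Outside $E$, either $\tilde{\mathcal{X}}_h(m)\geq c_1$ strictly (in the ``bulk''), which yields $\mathcal{X}_h(a)\geq (c_1/2)\|\xi\|_x^\sigma \gg c_0\sigma\|\xi\|_x^\sigma$ for $\sigma$ small, or we are in a neighborhood of $A^a_p\cup R^r_p$ outside $E$ where $\beta m\geq c_2>0$ by construction; absorption of the $O(\sigma)$ error from $\tilde{\mathcal{X}}_h(m)$ then gives $\mathcal{X}_h(a)\geq c_0\sigma\|\xi\|_x^\sigma$ uniformly. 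The open cone $C$ is the homogeneous extension of $\{m\leq -c_0/2\}$, which is nonempty under the hypothesis $n\geq 2$ or existence of a critical point: at a source the whole fiber $S^*_{\text{source}}M$ lies in $R^r$ and is disjoint from $A^a$, providing abundant room; at a saddle, $R^r_p$ contains a nontrivial conormal submanifold disjoint from $A^a_p$; if $V$ has only closed orbits and $n\geq 2$, the codimension-one strong stable foliation $W^{ss}(x)$ along closed orbits provides enough transverse cotangent directions.

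\textbf{Main obstacle.} The hardest step is the fiber-dependent construction of $m$ near critical elements, especially near closed orbits where $A^a_p\cap R^r_p$ is nontrivial (it sits along the conormal direction to the flow). There one must interpolate between $m\geq M_+$ on a neighborhood of $A^a_p\cap R^r_p$ and $m\leq -M_-$ on an adjacent piece of $R^r_p\setminus A^a_p$, while preserving $\sigma\beta m + \tilde{\mathcal{X}}_h(m)\geq c_0\sigma$ in the gluing region outside $E_p$. This is precisely where the microlocal construction of~\cite{DR.MS1} applies: local hyperbolic coordinates near each critical element combined with carefully tuned finite-time averages of cocycles yield a smooth $m$ with the right sign of $\tilde{\mathcal{X}}_h(m)$ in the transition region. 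The remaining bookkeeping in powers of $\sigma$, together with the smooth transition between $\mathcal{U}$ and its complement via the energy function, is then routine.
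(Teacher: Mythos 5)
Your overall architecture matches the paper's: Meyer's energy function gives strict monotonicity away from $\operatorname{NW}(V)$, the attractor/repellor structure of Proposition~\ref{prop.proj.flow} drives the construction near the critical elements, $E$ is taken as a small conic neighborhood of $A^a\cap R^r$, and the negative cone $C$ comes from the region near $R$ where the order-0 factor is negative. However, your choice of homogeneous factor introduces a genuine gap exactly in the region the paper flags as the delicate one.

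Writing $a=\|\xi\|_x^\sigma\,m$ and expanding $\mathcal{X}_h(a)=\|\xi\|_x^\sigma\bigl[\sigma\,\beta\,m+\tilde{\mathcal{X}}_h(m)\bigr]$ makes the argument hinge on the \emph{instantaneous} log-expansion cocycle $\beta(x,\xi)=\frac{\di}{\di t}\big|_{t=0}\log\|\bigl[\di\varphi^t_V\bigr]^{-\top}\xi\|$. You then assert ``$\beta>0$ on a conic neighborhood of $A^a_p$ and $\beta<0$ on a conic neighborhood of $R^r_p$.'' Both claims cannot hold simultaneously on a neighborhood of $A^a_p\cap R^r_p$, which is nonempty as soon as $V$ has a closed orbit; and neither is a ``fact'' for the given metric $g$ in the first place, since instantaneous expansion rates are not sign-controlled (only long-time averages are, via Lemma~\ref{MS.growth}). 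Consequently your verification on $E$ (``since $\beta>0$ and $m\geq M_+$, we get $\sigma\beta m+\tilde{\mathcal{X}}_h(m)\geq 0$'') is unjustified precisely where both $\tilde{\mathcal{X}}_h(m)$ and the flow-average of $\beta$ vanish, and the $\sigma\beta m$ term is all that is left.

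The paper sidesteps this by refusing to fix the homogeneous factor as $\|\xi\|_x^\sigma$: it builds a $\sigma$-homogeneous $f_\sigma$ (Lemma~\ref{function.f}) via \emph{finite-time averages} of the cocycle, so that $\mathcal{X}_h(f_\sigma)\leq-\gamma_1\sigma f_\sigma$ near $R\cap R^r$ and $\geq\gamma_2\sigma f_\sigma$ near $A\cap A^a$, and, crucially, sets $f_\sigma=|h|^\sigma$ on a neighborhood of $R^r\cap A^a$. Since $h$ is conserved by its own Hamiltonian flow, $\mathcal{X}_h(f_\sigma)=0$ there, and the problematic term drops out: on $E$ one only needs $\mathcal{X}_h(m)\geq0$ and $m>1/4$ to get $\mathcal{X}_h(a)\geq0$ and $a\geq\delta\|\xi\|_x^\sigma$. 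To salvage your proposal you would either have to absorb the time-averaging into the homogeneous factor (which reproduces the paper's $f_\sigma$, at which point $m\cdot f_\sigma/\|\xi\|_x^\sigma$ is exactly the paper's order function), or replace $g$ by an adapted metric on which the instantaneous $\beta$ has the correct sign near $A^a$ and $R^r$ and simultaneously vanishes on $A^a\cap R^r$; neither step is in your write-up, and the second would require an argument of its own.
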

\begin{remark}
Observe that when $V$ has no closed orbits, $A^a\cap R^r$ is empty and our proof will in fact show that $E$ is empty.
\end{remark}
In order to prove Proposition \ref{prop.escape}, we will use all the dynamical ingredients introduced in Section \ref{MS.section}. We will construct an escape function $a \in C^\infty(T^*M \setminus \{ 0 \})$ of the form 
\begin{equation}\label{form.escape}
    a(x, \xi) := m(x,\xi) f(x, \xi), \quad \forall (x, \xi) \in T^*M \setminus \{ 0 \},
\end{equation}
where $m \in C^{\infty}(T^*M \setminus \{0\})$ is a positively homogeneous function of degree zero (recall Definition \ref{def.pos.homo}) that we call \emph{order function } following the literature (see e.g. \cite{Faure.Sjostrand}) and $f(x, \xi)$ is a positively homogeneous function of degree $\sigma>0$, with $\sigma$ to be determined depending on the vector field. Our construction is close to the one appearing in the microlocal approach to Ruelle resonances in dynamical systems and we refer for instance to~\cite{Faure.Sjostrand, DR.MS1, Meddane} for presentations close to ours. The main difference with these references is that we  pick $f$ to be $\sigma$-homogeneous while these earlier works use symbols of order $\sigma$ for any $\sigma>0$. Roughly speaking, they use $\log(1+\|\xi\|_x^2)$ while we use $\|\xi\|_x^{\sigma}$. This is due to the different nature of the problems at hand: growth of Sobolev norms for time dependent vector fields in our case compared with the spectral study that is considered in these previous contributions.

We will first construct the function $m$ that will strictly decay away from the repellors and attractors of the lifted flow. A similar construction has been already achieved in~\cite{DR.MS1} in the case of Morse-Smale flows. In view of our application, the only thing we need to pay attention to, compared with this reference, is to specify the values of $m$ near the various subsets $A_p$, $A^a_p$, $R_p$ and $R_p^r$. This is the content of paragraph~\ref{ss.proof.order} and more specifically of Lemma~\ref{order.function}. Once this is done, we will construct in paragraph~\ref{ss.proof.f} the function $f$ and show that it has the expected properties (up to signs issues) near the attractors and repellors. We will also verify that its Poisson bracket with $h$ behaves nicely away from these sets. Once this is done, we will gather these two functions as in~\eqref{form.escape} and prove Proposition~\ref{prop.escape} in paragraph~\ref{ss.proof.escape}.

\subsection{Construction of the order function}\label{ss.proof.order}
In this section, we construct the first component of the escape function, the \emph{order function} $m \in C^\infty(T^*M \setminus \{ 0 \})$ in \eqref{form.escape}. Recall the definition of the sets in \eqref{A.R} and \eqref{A.a,R.r}.
Our first ingredient is the following lemma, which is an adaptation to our context of \cite[Lemma 2.1]{Faure.Sjostrand} (in which the authors deal with Anosov flows instead of Morse-Smale ones). 
\begin{lemma}\label{Lemma.m1}
    Let $V \in \fX^{MS}(M)$ and let $\tilde{\cX}_h$ be the induced vector field on $S^*M$ (see \ref{proj.flow}). For every $\e>0$ and for all neighborhoods $V(R_p)$ and $V(A_p^a)$ (of $R_p$ and $A_p^a$ respectively), there exist $\eta_1>0$, neighborhoods $W(R_p) \subset V(R_p)$, $W(A^a_p) \subset V(A^a_p)$ (of $R_p$ and $A_p^a$ respectively) and a function $m_1 \in C^\infty(S^*M; [0, 1])$ such that 
    \begin{align}
        & \tilde{\cX}_h(m_1) \geq 0 \quad \mbox{ on the whole } S^*M,\\
        &\label{property.m1}\tilde{\cX}_h(m_1) \geq \eta_1 \quad \mbox{on } S^*M \setminus (W(R_p) \cup W(A^a_p)).
    \end{align}
    Moreover $m_1<\e$ on $W(R_p)$ and $m_1>1-\e$ on $W(A^a_p)$. 
\end{lemma}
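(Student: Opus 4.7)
The construction of $m_1$ follows the scheme developed in~\cite{Faure.Sjostrand, DR.MS1}: realize $m_1$ as an exponentially weighted future-average of a smooth auxiliary function $\psi\in C^\infty(S^*M;[0,1])$ that is non-decreasing along orbits of $\tilde{\Phi}_V^t$, vanishes on a neighborhood of $R_p$, and equals $1$ on a neighborhood of $A_p^a$.

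The first step is to use the attractor/repellor structure from Proposition~\ref{prop.proj.flow} to produce open neighborhoods $\widetilde{W}(R_p)\subset V(R_p)$ and $\widetilde{W}(A_p^a)\subset V(A_p^a)$ which are respectively backward- and forward-invariant under $\tilde{\Phi}_V^t$, with disjoint closures. By compactness of $S^*M\setminus\widetilde{W}(R_p)$, continuity of the flow, and the pointwise forward convergence in Proposition~\ref{prop.proj.flow}, one then obtains a uniform transit time $T_0>0$ such that $\tilde{\Phi}_V^{T_0}\bigl(S^*M\setminus\widetilde{W}(R_p)\bigr)\subset\widetilde{W}(A_p^a)$.

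The heart of the construction is then to build $\psi$. The pull-back $\pi^*\mathcal{E}$ of Meyer's energy function (Lemma~\ref{Lemma.energy}) already provides a flow-monotone function on $S^*M$ that separates different critical elements of $V$ on the base $M$. However, above a single critical element $\Lambda$, the sets $R_p\cap S^*_\Lambda M$ and $A_p^a\cap S^*_\Lambda M$ live in the cotangent fiber $S^*_\Lambda M$ and are distinct subsets which cannot be separated by a function constant on fibers. The remedy is to add, in a neighborhood of each critical element, a fiber-dependent contribution obtained from the hyperbolic linearization: for the linearized flow, $R_p\cap S^*_\Lambda M$ is a repellor and $A_p^a\cap S^*_\Lambda M$ is an attractor inside $S^*_\Lambda M$, and one can build a smooth local Lyapunov function separating them. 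These local contributions are glued to $\pi^*\mathcal{E}$ via a partition of unity chosen so as to preserve monotonicity under $\tilde{\Phi}_V^t$.

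Given $\psi$, set
\[
m_1(x,\xi) \;:=\; \int_0^\infty \lambda\, e^{-\lambda s}\,\psi\bigl(\tilde{\Phi}_V^s(x,\xi)\bigr)\,ds, \qquad \lambda>0.
\]
A direct differentiation yields $\tilde{\cX}_h(m_1)=\lambda(m_1-\psi)$, and monotonicity of $\psi$ along orbits gives $m_1\geq\psi$ pointwise, hence $\tilde{\cX}_h(m_1)\geq 0$ on $S^*M$. Setting $W(A_p^a):=\widetilde{W}(A_p^a)$, forward invariance forces $\psi\equiv 1$ on the forward orbit of any point of $W(A_p^a)$, so $m_1\equiv 1$ there; for points outside $\widetilde{W}(R_p)\cup\widetilde{W}(A_p^a)$, the uniform transit bound produces a strictly positive gap $\tilde{\cX}_h(m_1)\geq\eta_1>0$. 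Finally, choosing $W(R_p)\subset\widetilde{W}(R_p)$ small enough that the forward exit time out of $\widetilde{W}(R_p)$ is uniformly bounded below by some $T_*>0$ on $W(R_p)$, one obtains $m_1\leq e^{-\lambda T_*}<\e$ there for $\lambda$ large enough. The main technical obstacle is the construction of the flow-monotone function $\psi$: combining the base-level energy function with the fiber-level hyperbolic data in a globally consistent manner is precisely the step that requires importing the Morse-Smale constructions of~\cite{DR.MS1}.
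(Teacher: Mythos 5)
Your overall scheme is right in outline -- use the attractor/repellor structure to find backward/forward-invariant neighborhoods, extract a uniform transit time, average a cutoff along the flow -- and this is indeed what the cited sources~\cite{Faure.Sjostrand,DR.MS1,Meddane} do; the paper itself just points to~\cite[Lemma 8.2]{DR.MS1} and~\cite[Lemma 2.1]{Faure.Sjostrand}, noting that the real work lies in establishing compactness of $R_p,A_p^a$ and the existence of invariant neighborhoods, i.e.\ your step 1.

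However, there is a genuine misstep in your step 2: you insist that the auxiliary function $\psi$ be \emph{flow-monotone} ($\tilde{\cX}_h\psi\geq 0$), and you then propose to build such a $\psi$ by gluing the pullback $\pi^*\mathcal{E}$ of Meyer's energy function to local fiber Lyapunov functions near critical elements. This is circular and unnecessary. Constructing a global, smooth, $\tilde{\Phi}_V^t$-monotone function on $S^*M$ that separates $R_p$ from $A_p^a$ is essentially the same problem as constructing $m_1$ itself -- it amounts to proving a Meyer-type energy function theorem for the lifted flow -- and the ``gluing via a partition of unity preserving monotonicity'' is exactly the kind of delicate step that cannot be taken for granted. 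The whole point of the averaging trick in~\cite{Faure.Sjostrand} is that one does \emph{not} need $\psi$ to be monotone. It suffices to take any smooth $\psi\in C^\infty(S^*M;[0,1])$, via Urysohn, with $\psi\equiv 0$ on the backward-invariant $\widetilde{W}(R_p)$ and $\psi\equiv 1$ on the forward-invariant $\widetilde{W}(A_p^a)$, and then set $m_1(\rho):=\frac{1}{T}\int_0^T\psi(\tilde{\Phi}_V^s(\rho))\,ds$ with $T\geq T_0$ the transit time. One computes $\tilde{\cX}_h m_1(\rho)=\frac{1}{T}\bigl(\psi(\tilde{\Phi}_V^T(\rho))-\psi(\rho)\bigr)$, and the three cases $\rho\in\widetilde{W}(R_p)$ (where $\psi(\rho)=0$), $\rho\in\widetilde{W}(A_p^a)$ (where forward invariance forces $\psi(\tilde{\Phi}_V^T(\rho))=1=\psi(\rho)$), and $\rho$ outside both (where the transit bound forces $\psi(\tilde{\Phi}_V^T(\rho))=1\geq\psi(\rho)$) give non-negativity with no hypothesis on the internal behavior of $\psi$. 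The invariance of the two neighborhoods, not the monotonicity of the cutoff, is what produces the monotone average. The strict bound off $W(R_p)\cup W(A_p^a)$ and the near-$0$/near-$1$ values follow by the same case analysis after shrinking $W(R_p)$.

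Two smaller remarks. First, your exponential average would work, but as written it either relies on the (unneeded, hard) monotonicity of $\psi$, or, if $\psi$ is just a Urysohn cutoff, requires $\lambda$ small for $m_1\geq\psi$ while your $m_1<\e$ bound on $W(R_p)$ wants $\lambda$ large -- so the finite-time average is the cleaner choice. Second, Meyer's energy function $\mathcal{E}$ does not belong in this lemma at all: it only enters later, in Lemma~\ref{order.function}, where it is \emph{added} to $m_1$ and $m_2$ to produce the final order function $m$.
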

The proof of this Lemma was given in~\cite[Lemma 8.2]{DR.MS1} using the exact same argument as that of \cite[Lemma 2.1]{Faure.Sjostrand}. Note that, in order to follow this argument, one needs compactness of the sets $A^a_p$ and $R_p$ and existence of invariant neighborhoods of these sets for the flow -- see~\cite[Th.~5.4 and 7.1]{DR.MS1}. While this is obvious in the Anosov case, these two properties are much more subtle in the Morse-Smale case. Again, we emphasize that~\cite{DR.MS1} made the assumption that the flow is $\mathcal{C}^1$-linearizable and this restriction was removed by Meddane who performed this construction in the general case of Axiom A flows without any linearization assumption~\cite[Sect. 8]{Meddane}.

\begin{remark}\label{remark.m2}
    By applying the flow in backward time, the analog of Lemma \ref{Lemma.m1} holds for $R^r_p$ and $A_p$. For every $\e>0$ and for every $V(R^r_p)$ and $V(A_p)$ (neighborhoods of $R_p^r$ and $A_p$ respectively), there exist $\eta_2>0$, $W(R_p^r) \subset V(R^r_p)$ and $W(A_p) \subset V(A_p)$ (neighborhoods of $R_p^r$ and $A_p$ respectively) and $m_2 \in C^\infty(S^*M; [0,1])$ satisfying the analog of properties \eqref{property.m1} and such that $m_2< \e$ in $W(R^r_p)$ and $m_2>1-\e$ in $W(A_p)$. 
\end{remark}
\paragraph{Notations.} We denote by $W(R), \ W(R^r), W(A), \ W(A^a)$ the extensions by homogeneity to the whole $T^*M \setminus\{0\}$ of the sets in Lemma \ref{Lemma.m1} and Remark \ref{remark.m2}. We define
\begin{equation}\label{bad.region}
    \bB:= ( W(R) \cap W(R^r)) \cup (W(A) \cap W(A^a)) \cup (W(R^r) \cap W(A^a)). 
\end{equation}
Remark that, up to shrinking the $W$-neighborhoods, we can suppose that all the other intersections among such neighborhoods are empty (see \eqref{intersections}). Given a neighborhood $\mathcal{U}$ of $\operatorname{NW}(V)$ (see \ref{nonwand.set}), we also define 
\begin{equation}\label{Bu}
\bB_\mathcal{U}:=\bB\cap T^*\mathcal{U}.
\end{equation}
Recall that, from Definition \ref{def.MS}, $\operatorname{NW}(V)$ coincides with the set of critical elements of $V$.

We can now state the main result of this paragraph.
\begin{lemma}\label{order.function}
    Let $V \in \fX^{MS}(M)$, let $\cX_h$ be the induced Hamiltonian vector field on $T^*M \setminus \{ 0 \}$ (see \eqref{h}), let $\bB$ as in~\eqref{bad.region} and let $\mathcal{U}$ be a neighborhood of $\operatorname{NW}(V)$.  
    There exists a function $m \in C^\infty(T^*M \setminus \{ 0 \};[-4,4])$, positively homogeneous of degree zero, such that
    \begin{equation}\label{Xm.non.neg}
    \cX_h(m) \geq 0, \quad \mbox{ on the whole } T^*M\setminus \{ 0 \};
    \end{equation}
    and 
    \begin{equation}\label{m.outside}
        \cX_h(m) \geq \eta >0, \quad \mbox{ on } T^*M \setminus \bB_\mathcal{U}. 
    \end{equation}
    Moreover, 
    \begin{enumerate}
        \item[(i)] $m < -\frac12$ on $W(R^r) \cap W(R) $; 
        \item[(ii)] $m> \frac12$ on $W(A) \cap W(A^a)$; 
        \item[(iii)] $m >\frac14$ on $W(A^a) \cap W(R^r)$. 
    \end{enumerate}
\end{lemma}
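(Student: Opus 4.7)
The plan is to build $m$ as a linear combination of the two order functions $m_1$ and $m_2$ produced by Lemma~\ref{Lemma.m1} and Remark~\ref{remark.m2}, extended from $S^*M$ to $T^*M\setminus\{0\}$ by $0$-homogeneity. First I would fix $\epsilon\in(0,\tfrac18)$ and apply those two results with initial neighborhoods chosen small enough that two arrangements hold simultaneously. On one hand, using the transversality consequences~\eqref{intersections}, I would shrink the $V$-neighborhoods until the three ``cross'' intersections $W(R)\cap W(A)$, $W(R)\cap W(A^a)$ and $W(R^r)\cap W(A)$ are empty. On the other hand, since the sets $R_p^r$ and $A_p^a$ project to the union of the critical elements (whereas $R_p$ and $A_p$ are spread over entire stable/unstable manifolds), I would further shrink so that $W(R^r)\cup W(A^a)\subset T^*\mathcal{U}$. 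Each of the three pieces defining $\bB$ in~\eqref{bad.region} then sits inside $T^*\mathcal{U}$, which yields the equality $\bB=\bB_\mathcal{U}$ needed in the second conclusion of the lemma.

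The candidate function would be
\[
m(x,\xi):=2\,m_1(x,\xi)+m_2(x,\xi)-1,\qquad (x,\xi)\in T^*M\setminus\{0\},
\]
with $m_1,m_2$ now viewed as $0$-homogeneous functions, so that $m\in[-1,2]\subset[-4,4]$ and $m$ is smooth away from the zero section. The three threshold conditions of the lemma then follow by direct inspection on each intersection, using the size bounds for $m_1$ and $m_2$ provided by Lemma~\ref{Lemma.m1} and Remark~\ref{remark.m2}: on $W(R)\cap W(R^r)$ one has $m_1,m_2<\epsilon$, hence $m<3\epsilon-1<-\tfrac12$; on $W(A)\cap W(A^a)$ one has $m_1,m_2>1-\epsilon$, hence $m>2-3\epsilon>\tfrac12$; and on $W(R^r)\cap W(A^a)$ one has $m_1>1-\epsilon$ while $m_2<\epsilon$, giving $m>1-2\epsilon>\tfrac14$.

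Finally I would check the dynamical conditions. Because $\cX_h$ acts on $0$-homogeneous functions as the pullback of $\tilde{\cX}_h$ from $S^*M$, the identity $\cX_h(m)=2\cX_h(m_1)+\cX_h(m_2)\geq 0$ follows everywhere from~\eqref{property.m1} and its backward-time analogue. For the strict lower bound on $T^*M\setminus\bB_\mathcal{U}$, I would rely on the elementary set identity
\[
\bigl(W(R)\cup W(A^a)\bigr)\cap\bigl(W(R^r)\cup W(A)\bigr)=\bB,
\]
which is valid precisely once the three cross intersections above have been made empty. For any $(x,\xi)\notin\bB$ at least one of the two alternatives therefore occurs: either $(x,\xi)\notin W(R)\cup W(A^a)$, in which case $\cX_h(m_1)\geq\eta_1$, or $(x,\xi)\notin W(R^r)\cup W(A)$, in which case $\cX_h(m_2)\geq\eta_2$. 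Either way, $\cX_h(m)\geq\eta:=\min(2\eta_1,\eta_2)>0$. The only delicate point I anticipate is the initial shrinking step: one has to check that $R_p^r$ and $A_p^a$ genuinely live over $\operatorname{Crit}(V)$ and then use compactness of $S^*M$ to arrange all the neighborhoods coherently. Once this bookkeeping is in place, the rest is the short algebraic verification described above.
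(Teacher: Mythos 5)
Your threshold calculations and the non-negativity of $\cX_h(m)$ on all of $T^*M\setminus\{0\}$ are fine, but the proof has a genuine gap in establishing the strict lower bound $\cX_h(m)\geq \eta>0$ on $T^*M\setminus\bB_{\mathcal{U}}$. The issue is your claim that $R^r_p$ and $A^a_p$ ``project to the union of the critical elements.'' They do not. By definition, $R^r_p=\bigcup_i\bigcup_{x\in\Lambda_i}N^*(W^{ss}(x))\cap S^*M$, and the conormal bundle $N^*(W^{ss}(x))$ lives over the entire strong stable manifold $W^{ss}(x)$, not just over the critical element $\Lambda_i$. For a critical point $\Lambda_i=\{p\}$ one even has $W^{ss}(p)=W^s(p)$, so $R^r_p$ contains the conormal bundle of the \emph{full} stable manifold of every critical point, and similarly $A^a_p$ lives over the full unstable manifolds. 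Consequently, no amount of shrinking can arrange $W(R^r)\cup W(A^a)\subset T^*\mathcal{U}$ when $\mathcal{U}$ is a small neighborhood of $\operatorname{NW}(V)$, and therefore $\bB\subset T^*\mathcal{U}$ (equivalently $\bB=\bB_\mathcal{U}$) is false in general.

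The concrete failure occurs at points $(x,\xi)\in\bB$ with $x\notin\mathcal{U}$: these are in $T^*M\setminus\bB_\mathcal{U}$ but not covered by your case analysis. If, say, $(x,\xi)\in W(R)\cap W(R^r)$ with $x$ well away from $\operatorname{NW}(V)$, then $(x,\xi)\in W(R)\cup W(A^a)$ and $(x,\xi)\in W(R^r)\cup W(A)$, so Lemma~\ref{Lemma.m1} and Remark~\ref{remark.m2} only yield $\cX_h(m_1)\geq 0$ and $\cX_h(m_2)\geq 0$ there, not the strict bounds $\eta_1,\eta_2$. Your candidate $m=2m_1+m_2-1$ thus only gives $\cX_h(m)\geq 0$ at these points. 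This is precisely why the paper's construction adds the energy function $\mathcal{E}$ from Lemma~\ref{Lemma.energy}: since $\cL_V\mathcal{E}>0$ on $M\setminus\operatorname{Crit}(V)$ and $\operatorname{Crit}(V)\subset\mathcal{U}$, the term $\tilde{\cX}_h(\mathcal{E})=\cL_V\mathcal{E}$ is bounded below by $\min_{M\setminus\mathcal{U}}\cL_V\mathcal{E}>0$ at the problematic points with $x\notin\mathcal{U}$, rescuing the strict lower bound. You would need to reintroduce $\mathcal{E}$ (suitably rescaled so the threshold inequalities (i)--(iii) survive, as in~\eqref{E.1}) to close this gap; it is not an optional ingredient.
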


\begin{proof}
    In order to prove the Lemma, we construct a function $\widetilde{m} \in C^\infty(S^*M; [-4,4])$ such that conditions \eqref{Xm.non.neg}, \eqref{m.outside} and items $(i)$, $(ii)$ and $(iii)$ hold for $\wt{m}$ and $\tilde{\cX}_h$ on $S^*M$. 
    Then, defining by homogeneity 
\begin{equation}\label{m.tilde.m}
    m(x, \xi) : = \widetilde{m}\left( x, \frac{\xi}{\norm{\xi}_x} \right), \qquad \forall (x, \xi) \in T^*M \setminus \{ 0\},
\end{equation}
we have $m \in C^\infty(T^*M \setminus \{ 0 \}; \R)$ and $\cX_h(m)= \tilde{\cX}_h(\tilde{m})$. 
Thus we now turn to defining the function $\wt{m}$ following the lines of~\cite{DR.MS1}. To this aim we recall the definition of the energy function $\mathcal{E}(x)$ in Lemma \ref{Lemma.energy} and we claim that, choosing $\e>0$ small enough in Lemma~\ref{Lemma.m1}, we can put 
\begin{equation}\label{tilde.m}
\tilde{m}\left(x, \frac{\xi}{\norm{\xi}_x}\right):= \mathcal{E}(x) -1 + \frac32 m_1\left(x, \frac{\xi}{\norm{\xi}_x}\right) + \frac12 m_2\left(x, \frac{\xi}{\norm{\xi}_x} \right),
\end{equation}
(we recall that $m_1$ and $m_2$ defined in Lemma \ref{Lemma.m1} and Remark \ref{remark.m2} depend on $\e$). First of all we remark that $\wt{m}$ is smooth since both $\mathcal{E}$, $m_1$ and $m_2$ are. Next we immediately see that $\tilde{\cX}_h(\wt{m})\geq 0$ since 
\[
\tilde{\cX}_h(\wt{m})\stackrel{\eqref{tilde.m}}{=}\underbrace{\tilde{\cX}_h(\mathcal{E})}_{\stackrel{\eqref{proj.flow}}{=}\cL_V(\mathcal{E}) \stackrel{\eqref{eq.energy}}{\geq}0} + \frac32\tilde{\cX}_h(m_1) + \frac12 \tilde{\cX}_h(m_2) \stackrel{\eqref{property.m1}}{\geq}0,
\]
proving the first  condition. Moreover, 
\[
\tilde{\cX}_h(\wt{m}) \stackrel{\eqref{eq.energy}}{=}\tilde{\cX}_h(E)+ \frac32\tilde{\cX}_h(m_1) + \frac12\tilde{\cX}_h(m_2) \geq \min \left\{ \min_{M\setminus\mathcal{U}}\mathcal{L}_V(E),\frac32\eta_1, \frac{\eta_2}{2} \right\}=: \eta>0 \qquad \mbox{outside } \bB_\mathcal{U},
\]
where we recall the definition of $\bB_{\cU}$ in \eqref{Bu} and we use, for the last inequality, condition \eqref{property.m1}, the analog in Remark \ref{remark.m2} for the function $m_2$ and Lemma~\ref{Lemma.energy} for $\mathcal{E}$. Thanks to \eqref{m.tilde.m}, this concludes the proof of \eqref{Xm.non.neg} and \eqref{m.outside}. 
We are left to prove items $(i)$, $(ii)$ and $(iii)$. To this aim we make the assumption 
\begin{equation}\label{E.1}
    \sup_{M} |\mathcal{E}| < \frac18,
\end{equation} 
that we will use along the proof of the three items. Notice that this is not in contradiction with previous part of the proof nor restrictive: indeed the function $\mathcal{E}$ is clearly bounded and, up to now, we have only used that $\cL_V(\mathcal{E})>0$, which is still true if we rescale $\mathcal{E}$ by a positive factor and impose \eqref{E.1}. 
\paragraph{Item $(i)$.} For this item, we have to look at the set $W(R^r_p) \cap W(R_p)$, where both $m_1<\e$ and $m_2 < \e$. Thus we have 
\[
\tilde{m} = \mathcal{E}(x) - 1 +\frac32 m_1 +\frac12 m_2 \stackrel{\eqref{E.1}}{\leq} -\frac78 + 2\e < -\frac 12, 
\]
choosing $0<\e< \frac{3}{16}$. From \eqref{m.tilde.m}, this concludes the proof of the first item.

\paragraph{Item $(ii)$.} For this item, we have to look at the set $W(A_p) \cap W(A^a_p)$, where both $m_1>1-\e$ and $m_2 >1- \e$. Thus we have 
\[
\tilde{m} = \mathcal{E}(x) - 1 +\frac32 m_1 +\frac12 m_2 \stackrel{\eqref{E.1}}{\geq} -\frac98 + 2(1-\e) =\frac78-2\e >\frac 12, 
\]
choosing again $0<\e< \frac{3}{16}$. From \eqref{m.tilde.m}, this concludes the proof of the second item.

\paragraph{Item $(iii)$.} For this item, we have to look at the set $W(A_p^a) \cap W(R^r_p)$, in which $m_1>1-\e$. We also use that $m_2\geq 0$. We have:
\[
\tilde{m} = \mathcal{E}(x) - 1 +\frac32 m_1 +\frac12 m_2 \stackrel{\eqref{E.1}}{\geq} -\frac98 + \frac32(1-\e) =\frac38-\frac32\e >\frac 14, 
\]
choosing $0 < \e < \frac{1}{12}$.\\
In conclusion choosing any $0<\e < \frac{1}{12}$ and the corresponding functions $m_1$ and $m_2$, all three items are satisfied, concluding the proof. 
\end{proof}

\subsection{Construction of the function $f$}\label{ss.proof.f}
In this section we construct the second component of the escape function, i.e. the function denoted by $f$ in \eqref{form.escape}. Precisely, we have the following Lemma.

\begin{lemma}\label{function.f}
    There exist conical neighborhoods $V(A)$, $V(A^a)$, $V(R)$ and $V(R^r)$ (respectively of $A$, $A^a$, $R$ and $R^r$) and a neighborhood $\mathcal{U}$ of $\operatorname{NW}(V)$ such that the following holds. 
    
    For any $\sigma>0$ one can find a function $f_{\sigma}: T^*M \setminus \{ 0 \} \to \R$, smooth, positively homogeneous of degree $\sigma$, with restriction to $S^*M$ positive, which satisfies the following properties: 
    \begin{itemize}
        \item[(i)] $\cX_h(f_{\sigma}) \leq - \gamma_1 \sigma f_{\sigma}, \quad $ in $V(R) \cap V(R^r)\cap T^*\mathcal{U}$;
        \item[(ii)]
        $\cX_h(f_{\sigma}) \geq \gamma_{2} \sigma f_{\sigma}, \quad $ in $V(A) \cap V(A^a)\cap T^*\mathcal{U}$; 
        \item[(iii)]
        $ f_{\sigma}(x,\xi)=|h(x, \xi)|^{\sigma}, \quad \mbox{ in } V(R^r) \cap V(A^a)\cap T^*\mathcal{U}$;
    \end{itemize}
    for some $\gamma_{1}, \gamma_{2}>0$ independent of $\sigma$. Moreover, there exists a constant $c_0>0$ such that, for every $0<\sigma\leq 1$,
    \begin{equation}\label{e.uniform.bound.sigma}
         c_0^{-1}\|\xi\|_x^\sigma\leq  f_{\sigma}(x,\xi)\leq c_0\|\xi\|_x^\sigma\ \text{and}\ |\cX_h(f_\sigma)(x,\xi)|\leq c_0\sigma \|\xi\|_x^\sigma, \quad \forall (x,\xi)\in T^*M\setminus\{0\}.
    \end{equation}
    
\end{lemma}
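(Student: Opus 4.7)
\medskip

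The plan is to look for $f_\sigma$ of the form $f_\sigma := N^\sigma$, where $N : T^*M\setminus\{0\} \to (0,\infty)$ is a single smooth, positive, $1$-homogeneous function built independently of $\sigma$. This ansatz automatically yields smoothness, $\sigma$-homogeneity, positivity on $S^*M$, and reduces the $\sigma$-uniform bounds to a pair of statements on $N$ alone, namely $c_0^{-1}\|\xi\|_x \leq N \leq c_0\|\xi\|_x$ and $\cX_h(\log N) = \cX_h(N^2)/(2N^2)$ bounded and $0$-homogeneous on $T^*M\setminus\{0\}$, since $\cX_h f_\sigma = \sigma f_\sigma\, \cX_h(\log N)$. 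In particular (i)--(ii) turn into sign conditions on $\cX_h(\log N)$ with $\sigma$-independent constants, while $|\cX_h f_\sigma| \leq c_0 \sigma \|\xi\|_x^\sigma$ is immediate from boundedness of $\cX_h(\log N)$ on the compact $S^*M$.

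To construct $N$, I would use three local models stitched together by a partition of unity. Near each critical element $\Lambda_i \in \operatorname{Crit}(V)$ I would work in a small neighborhood $\mathcal{U}_i$ adapted to the hyperbolic splitting of the cotangent dynamics, and produce by the classical Lyapunov theorem, applied to the linearized cotangent flow at a fixed point or to the Floquet monodromy together with an averaging along the orbit at a closed orbit, a positive-definite quadratic form $Q_i^{\mathrm{hyp}}$ on the transverse cotangent directions satisfying
\[
\cX_h Q_i^{\mathrm{hyp}} \leq -2\gamma_1 Q_i^{\mathrm{hyp}} \text{ near } R \cap T^*\mathcal{U}_i, \qquad \cX_h Q_i^{\mathrm{hyp}} \geq 2\gamma_2 Q_i^{\mathrm{hyp}} \text{ near } A \cap T^*\mathcal{U}_i,
\]
with $\gamma_1, \gamma_2 > 0$ depending only on the hyperbolic spectrum. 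Completing $Q_i^{\mathrm{hyp}}$ by the $\cX_h$-invariant $h^2$ on the longitudinal $V^*$-direction, I set $Q_i := Q_i^{\mathrm{hyp}} + h^2$. This is positive-definite and $2$-homogeneous on $T^*\mathcal{U}_i \setminus \{0\}$, reduces to $h^2$ exactly on $R^r \cap A^a$, and satisfies $\cX_h Q_i = \cX_h Q_i^{\mathrm{hyp}}$ because $\{h, h^2\} = 0$.

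Using the disjointness relations \eqref{intersections} and compactness, I would next select pairwise disjoint open conic subsets $V(R)$, $V(A)$, $\mathcal{V}_C$ of $T^*\mathcal{U} \setminus \{0\}$, with $V(R)$ taken so small that $h^2 \leq Q_i/2$ there (and similarly for $V(A)$), and with $\mathcal{V}_C$ a conic neighborhood of $R^r \cap A^a \cap T^*\mathcal{U}$ on which $|h|/\|\xi\|_x$ is bounded below. This last property is possible because Morse-Smale transversality forces $R^r \cap A^a$ to be empty near fixed points, where $R^r = R$, $A^a = A$ and $R \cap A = \emptyset$, and to coincide with the longitudinal cotangent direction at closed orbits, where $h \neq 0$. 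Then I would take $V(R^r)$, $V(A^a)$ to be conic neighborhoods of $R^r$, $A^a$ whose intersection in $T^*\mathcal{U}$ equals $\mathcal{V}_C$, let $\psi_C \in C^\infty(T^*M \setminus \{0\}; [0, 1])$ be a $0$-homogeneous cutoff equal to $1$ on $\mathcal{V}_C$ and supported in a slightly larger conic set still contained in $\{h \neq 0\}$, and let $\{\chi_i, \chi_\infty\}$ be a smooth partition of unity on $M$ with $\chi_i \equiv 1$ near $\Lambda_i$ and $\chi_\infty$ supported away from $\operatorname{Crit}(V)$. I would then define
\[
N^2(x, \xi) := \psi_C(x, \xi)\, h(x, \xi)^2 + \bigl(1 - \psi_C(x, \xi)\bigr) \Bigl( \sum_i \chi_i(x) Q_i(x, \xi) + \chi_\infty(x) \|\xi\|_x^2 \Bigr),
\]
and set $f_\sigma := N^\sigma$. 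Each summand is $2$-homogeneous and nonnegative, and the support conditions guarantee that at every $(x, \xi) \in T^*M \setminus \{0\}$ at least one summand is strictly positive and comparable to $\|\xi\|_x^2$, giving smoothness, positivity and $1$-homogeneity of $N$ with the required two-sided norm comparison. Property (iii) is immediate since on $\mathcal{V}_C$ we have $\psi_C \equiv 1$ and all other terms vanish, so $N = |h|$. For (i), on $V(R) \cap T^*\mathcal{U}$ we have $\psi_C = 0$ and only the relevant $\chi_i = 1$, so $N^2 = Q_i$ and
\[
\cX_h N^2 = \cX_h Q_i^{\mathrm{hyp}} \leq -2\gamma_1 Q_i^{\mathrm{hyp}} \leq -\gamma_1 Q_i = -\gamma_1 N^2
\]
by the choice of $V(R)$, hence $\cX_h f_\sigma \leq -(\gamma_1/2)\sigma f_\sigma$; (ii) is symmetric.

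The main obstacle is the Lyapunov construction at closed orbits, where the cotangent flow on the transverse fibers is a $T$-periodic linear system rather than autonomous: there one must combine Lyapunov for the Floquet monodromy on the transverse slice with a transport-and-average argument along the orbit to produce a smooth quadratic form whose \emph{instantaneous} derivative along $\cX_h$, and not merely its average over one period, carries the prescribed sign. A secondary technical subtlety is the careful arrangement of the conic neighborhoods $V(R), V(A), V(R^r), V(A^a)$ and of the supports of $\psi_C$ and $\chi_i$ so that on each of the three regions in (i)--(iii) exactly one of the model quadratic forms $h^2, Q_i, \|\xi\|_x^2$ contributes to $N^2$; this is precisely where the Morse-Smale transversality relations \eqref{intersections} enter.
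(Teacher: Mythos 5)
Your proposal takes a genuinely different route from the paper's. The paper, mimicking \cite[Lemma 2.4]{BonthonneauWeich}, builds a single $1$-homogeneous function directly by Birkhoff time-averaging: on conic neighborhoods of $A\cap A^a$ and $R\cap R^r$ it sets $f_1^{a/r}(x,\xi):=\exp\bigl(T_1^{-1}\int_0^{T_1}\ln\|[\di\varphi_V^{\mp t}(x)]^{-\top}\xi\|\,dt\bigr)$, so that $\cX_h(f_1^r)=f_1^r\,T_1^{-1}\bigl(\ln\|[\di\varphi_V^{T_1}]^{-\top}\xi\|-\ln\|\xi\|\bigr)$ and Lemma~\ref{MS.growth} gives the desired sign once $T_1$ is large; it then extends smoothly (taking $|h|$ on a neighborhood of $R^r\cap A^a$), and sets $f_\sigma:=(f_1)^\sigma$. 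Your ansatz $f_\sigma=N^\sigma$ is the same $\sigma$-uniformity device, but you propose to obtain $N$ from local quadratic Lyapunov forms $Q_i=Q_i^{\mathrm{hyp}}+h^2$ glued by a partition of unity, rather than from time-averaging. What the paper's construction buys is uniformity: one formula treats fixed points and closed orbits identically, with no need to linearize. Your construction buys explicitness but requires a case analysis between fixed points and closed orbits, plus a more delicate bookkeeping of the conic supports (which you handle plausibly using \eqref{intersections}).

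The genuine gap in your proposal is the one you flag yourself: the construction of $Q_i^{\mathrm{hyp}}$ over a closed orbit. The classical Lyapunov theorem produces a quadratic form that strictly \emph{decreases} under a Hurwitz linear flow; what you need is a form on the transverse cotangent fibers with $\cX_h Q_i^{\mathrm{hyp}}\leq-2\gamma_1 Q_i^{\mathrm{hyp}}$ near $R$ and $\cX_h Q_i^{\mathrm{hyp}}\geq 2\gamma_2 Q_i^{\mathrm{hyp}}$ near $A$. This is the adapted-metric (cone-field) version -- $Q_i^{\mathrm{hyp}}=\|\xi^s\|^2+\|\xi^u\|^2$ in an adapted inner product on the hyperbolic splitting of the conormal bundle -- and for a $T$-periodic linear cocycle over a closed orbit, getting the \emph{instantaneous} derivative (not merely its period-average) to carry a sign requires a transport-and-average argument along the orbit. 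Once you write that argument out, it is precisely the Birkhoff average that the paper takes as its starting point, applied at the level of quadratic forms rather than $\ln\|\cdot\|$. So your route does not avoid the averaging; it defers it to the closed-orbit sub-case, at the cost of the extra machinery of partitions of unity and separate fixed-point treatment. With the adapted-metric Lyapunov form made precise and the closed-orbit averaging spelled out, your construction would yield the lemma, but the paper's formula is shorter and handles all critical elements in one stroke.
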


In order to prove this Lemma, we need the following result, which follows from the hyperbolic properties of the non-wandering set (see Definition \ref{def.MS}). 
\begin{lemma}\label{MS.growth}
    Let $V\in\fX^{MS}(M)$ and consider the associated Hamiltonian vector field $\cX_h$ with $h(x,\xi):=\xi(V(x))$.
    
    Then, there exist $\theta, \ C_a, C_r >0$ such that, for every $(x,\xi)\in T^*M$ verifying $x\in\operatorname{NW}(V)$,  
    \begin{equation}\label{exp.growth}
        \norm{[\di\varphi_V^{-t}(x)]^{-\top}\xi}_{\varphi^{-t}_V(x)} \leq C_a e^{-\theta t} \norm{\xi}_x, \quad \mbox{ in } A \cap A^a, \mbox{ and } \ \forall t >0, 
    \end{equation}
        and 
     \begin{equation}\label{exp.decay}
        \norm{[\di\varphi_V^t(x)]^{-\top}\xi}_{\varphi^t_V(x)} \leq C_r e^{-\theta t} \norm{\xi}_x, \quad \mbox{ in } R \cap R^r \mbox{ and } \  \forall t>0.
    \end{equation}
\end{lemma}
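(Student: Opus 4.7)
The plan is to leverage the hyperbolicity of each critical element, which is built into the Morse-Smale assumption. Since $\operatorname{NW}(V) = \Lambda_1 \cup \cdots \cup \Lambda_K$ and each $\Lambda_i$ is either a hyperbolic critical point or a hyperbolic closed orbit, by finiteness it suffices to prove \eqref{exp.decay} and \eqref{exp.growth} uniformly for $x$ in each $\Lambda_i$ separately, then take the minimum rate $\theta$ and maximum prefactor across $i$. I would also observe at the outset that \eqref{exp.growth} is obtained from \eqref{exp.decay} applied to the reversed vector field $-V$: this fixes the critical elements, swaps stable and unstable manifolds (and hence exchanges $R \leftrightarrow A$ and $R^r \leftrightarrow A^a$), and turns forward into backward propagation. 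Only \eqref{exp.decay} therefore needs to be proved.

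For a hyperbolic critical point $\Lambda_i = \{p\}$, the flow fixes $p$ and $\di\varphi^t_V(p) = \exp(t\,\di V(p))$ with invariant hyperbolic splitting $T_pM = E^s(p) \oplus E^u(p)$. At a critical point the foliation of $W^s$ is trivial, so $T_pW^s(\{p\}) = T_pW^{ss}(p) = E^s(p)$ and the fibre of $R \cap R^r$ over $p$ equals $(E^s(p))^\perp \subset T^*_pM$, naturally isomorphic to $(E^u(p))^*$ by restriction. Under this isomorphism the cotangent flow $[\di\varphi^t_V(p)]^{-\top}$ becomes $[\di\varphi^{-t}_V(p)|_{E^u(p)}]^\top$, whose operator norm is bounded by $C e^{-t\alpha_i}$ for $t>0$, with $\alpha_i := \min\{\operatorname{Re}\lambda : \lambda \in \operatorname{Spec}(\di V(p)),\ \operatorname{Re}\lambda>0\}>0$. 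This gives \eqref{exp.decay} at $p$. The analysis of a hyperbolic closed orbit $\Lambda_i = \gamma$ of period $T_i$ is structurally the same: the continuous invariant splitting $T_xM = \R V(x) \oplus E^{ss}(x) \oplus E^{uu}(x)$ along $\gamma$ identifies the fibre of $R \cap R^r$ over $x \in \gamma$ with $(\R V(x) \oplus E^{ss}(x))^\perp \simeq (E^{uu}(x))^*$, and hyperbolicity of the Poincar\'e return map gives exponential contraction of $[\di\varphi^{nT_i}_V(x)]^{-\top}$ on this subbundle at integer multiples of $T_i$, at the rate $\alpha_i$ fixed by the minimum log-rate of the unstable Floquet multipliers of $\gamma$. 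Writing $t = nT_i + s$ with $s \in [0,T_i]$ and absorbing the bounded factor $\|[\di\varphi^s_V(x)]^{-\top}\|$ into the constant then produces the continuous-time estimate, uniformly in $x \in \gamma$ by compactness.

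The only step that requires genuine care is this last interpolation in the closed-orbit case: one must check that the identification of $R\cap R^r$-fibres with duals of $E^{uu}(x)$ and the associated projection are continuous (and hence uniformly bounded) along $\gamma$, and that the fractional-period propagators are uniformly bounded on $[0,T_i]$. Both follow from compactness of $\gamma$ and smoothness of the flow. Collecting the estimates over the finitely many $\Lambda_i$ with $\theta := \min_i \alpha_i$ and $C_r := \max_i C_{r,i}$ (and $C_a$ likewise, after the sign reversal) then yields \eqref{exp.decay} and \eqref{exp.growth} as stated.
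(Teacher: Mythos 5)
The paper states Lemma~\ref{MS.growth} without proof, merely remarking that it ``follows from the hyperbolic properties of the non-wandering set,'' so there is no argument in the text to compare against; your proposal is supplying the missing argument. Having checked it, I find it sound: reducing to each $\Lambda_i$ separately (and taking the worst constants, exploiting finiteness of $\operatorname{Crit}(V)$), deducing~\eqref{exp.growth} from~\eqref{exp.decay} via the time reversal $V \mapsto -V$ (which is again Morse--Smale, preserves $\operatorname{NW}$, and swaps $R \leftrightarrow A$, $R^r \leftrightarrow A^a$), identifying the fibre of $R\cap R^r$ over a critical element with $\bigl(\R V(x)\oplus E^{ss}(x)\bigr)^{\perp}\simeq (E^{uu}(x))^{*}$ (degenerating to $(E^{s}(p))^{\perp}\simeq (E^{u}(p))^{*}$ at a critical point), and then reading off the contraction from the hyperbolic spectrum of $\di V(p)$ (resp.\ the unstable Floquet multipliers of the Poincar\'e return map, with the standard $t = nT_i + s$ interpolation and compactness) is exactly the natural route.

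Two small points are worth flagging. First, your estimate $\bigl\|\exp\bigl(-t\,\di V(p)\big|_{E^u(p)}\bigr)\bigr\|\leq Ce^{-t\alpha_i}$ with $\alpha_i$ the minimal unstable real part is not quite literal in the presence of Jordan blocks at the extremal eigenvalue: one either shrinks the rate slightly ($\theta < \alpha_i$ suffices, which is all the lemma asks for) or uses an adapted norm on $E^u(p)$; the same remark applies to the periodic-orbit case. Second, your implicit reading $R = N^*(W^s)$, $A = N^*(W^u)$ (so that $R \subset R^r$ and $A\subset A^a$ over $\operatorname{NW}(V)$, and the lemma is about the forward- resp.\ backward-contracted conormal directions) is the one needed for the statement to be nonvacuous and the one used in the proof of Lemma~\ref{function.f}; it is consistent with the definitions of $R_p, A_p$ in~\eqref{A.R}, even though equation~\eqref{A.R.TM} in the paper as written has $A$ and $R$ swapped relative to this. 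You silently corrected what appears to be a typo in the paper; that was the right call, but it deserves an explicit sentence.
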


We are now ready to prove Lemma \ref{function.f}.

\begin{proof}[Proof of Lemma~\ref{function.f}]
Mimicking the proof of~\cite[Lemma 2.4]{BonthonneauWeich}, we first define the two functions $f^{r}_1$ and $f^a_1$ as 
\begin{equation}\label{f.sigma}
    f^{a/r}_1(x,\xi):= e^{ I^{a/r}(x, \xi)}, \quad \mbox{ with } I^{a/r}(x, \xi):=\begin{cases}
        T_1^{-1}\int_0^{T_1} \ln{\norm{[\di\varphi_V^{-t}(x)]^{-\top}\xi}_x}\di t, & \mbox{ in } V(A) \cap V(A^a)\\
        T_1^{-1}\int_0^{T_1} \ln{\norm{[\di\varphi_V^{t}(x)]^{-\top}\xi}_x}\di t, & \mbox{ in } V(R) \cap V(R^r),\\
    \end{cases}
\end{equation}
for $T_1>0$ to be chosen big enough and for small neighborhoods $V(\cdot)$. 
First of all we remark that the functions $f^{a/r}_1$ are positively homogeneous of degree $1$ in their domains. Indeed, via a direct computation one obtains 
\begin{equation}\label{homo.f}
I^{a/r}(x, \lambda \xi)= \ln{\lambda} + I^{a/r}(x, \xi) \quad \implies \quad f^{a/r}_1(x, \lambda \xi) = \lambda f^{a/r}_1(x,\xi), \quad \forall \lambda >0, (x, \xi) \neq 0. 
\end{equation}
Next we claim that, up to considering slightly smaller neighborhoods, the functions $f^{a/r}_\sigma:=(f^{a/r}_1)^\sigma$ verify the inequalities of items $(i)$ and $(ii)$ of the Lemma. Let us postpone this verification and conclude the proof extending $f_1^{a/r}$ to the whole $T^*M \setminus \{ 0 \}$. To this aim, we exploit homogeneity \eqref{homo.f} and construct a function $\tilde{f}_1 \in C^\infty(S^*M)$ such that 
\begin{equation}
\tilde{f}_1=
    \begin{cases}
        f^a_1 & \mbox{ in } V(A_p) \cap V(A^a_p)\\
        f^r_1 & \mbox{ in } V(R_p) \cap V(R^r_p) \\
        \left|h\left(x, \frac{\xi}{\norm{\xi}_x}\right)\right| & \mbox{ in } V(R^r_p) \cap V(A^a_p).
    \end{cases}
\end{equation}
Recall that, from \eqref{intersections}, the three sets $V(R) \cap V(R^r)$, $V(A)\cap V(A^a)$ and $V(A^a) \cap V(R^r)$ do not intersect (up to restricting the neighborhoods if needed). Moreover the function $\tilde{f}_1$ is positive in the closed set $\overline{\bB}$ defined by \ref{bad.region}. It is then possible to extend $\tilde{f}_1$ to a positive smooth function on the whole compact manifold $S^*M$ (see for example \cite[2.26]{lee2003}). Note that, by compactness, the extended function and its derivatives are uniformly bounded. Finally we extend $\tilde{f}_1$ by homogeneity to the whole $T^*M \setminus \{ 0 \}$ and obtain $f_{1}$ positively homogeneous of degree $1$. We define then $f_\sigma:=(f_1)^\sigma$ so that~\eqref{e.uniform.bound.sigma} is automatically satisfied using the fact that $f_1$ is $1$-homogeneous.

We are thus left to prove the initial claim, i.e., that $f^a_{1}$ and $f^r_{1}$ in \eqref{f.sigma} verify items $(i)$ and $(ii)$ of the Lemma (the bound for $\sigma>0$ follows again directly by construction). We show only item $(i)$ since the two calculations are exactly the same. We first prove the claim on $R \cap R^r\cap\text{NW}(V)$. 
To this aim, we first we evaluate $\cX_h(f_1^r)$:
\begin{align}\label{X.fr}
    \cX_h(f_1^r)=\frac{\di}{\di s}\Big|_{s=0}f_1^r(\Phi^{s}_V(x, \xi)) \stackrel{\eqref{f.sigma}}{=} f_1^r(x, \xi) \frac{\di}{\di s}\Big|_{s=0}I^r(\Phi^{s}_V(x,\xi)),
\end{align}
and, using Lemma~\ref{MS.growth}, we have, on $R\cap R^r\cap\text{NW}(V)$: 
\begin{align}\label{I.r}
    \frac{\di}{\di s}\Big|_{s=0}I^r(\Phi^{s}_V(x,\xi))
    &= \frac{1}{T_1}\int_0^{T_1}\frac{\di}{\di s} \ln(\norm{[\di\varphi_V^{t+s}(x)]^{-\top}\xi}) \Big|_{s=0}\di t\\ 
    &= \frac{1}{T_1}\int_0^{T_1} \frac{\di}{\di t} \left( \ln(\norm{[\di\varphi_V^{t}(x)]^{-\top}\xi})\right) \di t\\
    &= \frac{1}{T_1} \left( \ln\left(\norm{[\di\varphi_V^{T_1}(x)]^{-\top}\xi}\right) - \ln \norm{\xi}\right)\\
    & \stackrel{\eqref{exp.decay}}{\leq}\frac{1}{T_1}  \left( \ln(C_r) - \theta T_1 \right).
\end{align}
Thus, choosing $T_1$ big enough so that $\ln(C_r)- \theta T_1 < 0$ and plugging \eqref{I.r} in \eqref{X.fr} we have
\begin{equation}
    \cX_h(f^r_1) \leq  f^r_1 \left(\frac{\ln(C_r)}{T_1}- \theta\right)  < - f^r_1 \gamma_1, 
\end{equation}
with $\gamma_1 = -\frac{\ln(C_r)}{T_1}+ \theta  >0$. This proves item $(i)$ for $f^r_1$ on $R \cap R^r\cap\text{NW}(V)$.
Then we can extend by continuity the inequality to a sufficiently small conical neighborhood $\mathbf{C}$ of $R\cap R^r\cap\text{NW}(V)$. Recall that we aim at showing such an upper bound inside $V(R)\cap V(R^r)\cap T^*\mathcal{U}$. Up to shrinking the size of the neighborhood $\mathcal{U}$ of $\text{NW}(V)$ and of the conical neighborhoods $V(R)\cap V(R^r)$ and recalling from Proposition~\ref{proj.flow} that $R^r_p$ and  $R_p$ are both compact subsets, one finds that $\mathbf{C}$ contains $V(R)\cap V(R^r)\cap T^*\mathcal{U}$ in its interior, proving item $(i)$.  One analogously proves item $(ii)$ using \eqref{exp.growth}.
\end{proof}

\subsection{Proof of Proposition \ref{prop.escape}}\label{ss.proof.escape}
We now put together Lemmas \ref{order.function} and \ref{function.f} to prove Proposition \ref{prop.escape}. Let $m \in C^\infty(T^*M \setminus \{ 0 \} )$ and  $\{f_{\sigma} \in C^\infty(T^*M\setminus \{ 0 \})\}_{ 0<\sigma <1}$ be respectively the order function given by Lemma \ref{order.function} and the family of functions given by Lemma \ref{function.f}, for the vector field $V \in \fX^{MS}(M)$. We note that the neighborhoods $\mathcal{U}$ and $W(S)$ in Lemma~\ref{order.function} are chosen in terms of the neighborhoods $\mathcal{U}$ and $V(S)$ given by Lemma~\ref{function.f}.

 We will prove that it is possible to choose $\sigma >0$ small enough, such that the function 
\begin{equation}\label{a}
    a(x, \xi):= m(x, \xi) f_{\sigma}(x,\xi), \quad \forall (x, \xi) \in T^*M \setminus \{ 0 \}, 
\end{equation}
satisfies Definition \ref{def.escape} for some $\delta >0$ and $E$ a closed conical neighborhood containing $A^a \cap R^r$ in its interior (where we recall the definition of the sets $A^a$ and $R^r$ in \eqref{A.a,R.r}). Note that~\eqref{negative.escape} is satisfied in $W(R)\cap W(R^r)$ thanks to item (i) in Lemma~\ref{order.function} and to~\eqref{e.uniform.bound.sigma}. 
We now use the explicit expression of  $\cX_h(a)$ to show that $a$ satisfies the requirements of Definition~\ref{def.escape}: 
\begin{equation}\label{X.a}
    \cX_h(a) \stackrel{\eqref{a}}{=} \cX_h(m f_{\sigma}) = \cX_h(m) f_{\sigma} + \cX_h(f_{\sigma})m.
\end{equation}
Next, recalling the notation in \eqref{Bu}, we split the cotangent space in $T^*M = (T^*M \setminus \bB_\mathcal{U}) \cup \bB_\mathcal{U}$ and treat the two regions separately. Remark that $W(A^a) \cap W(R^r)\cap T^*\mathcal{U} \subset \bB_\mathcal{U}$ is one of the three components of the set $\bB_\mathcal{U}$. Let us start with $T^*M \setminus \bB_\mathcal{U}$, for which we have to prove \eqref{bound.escape}. Using non-negativity of $f_{\sigma}$ and equations \eqref{m.outside} and ~\eqref{e.uniform.bound.sigma}, we have: 
\begin{equation}\label{bound.1}
    \cX_h(a)= \cX_h(m) f_{\sigma} + \cX_h(f_{\sigma})m \stackrel{\eqref{m.outside}}{\geq} \eta f_{\sigma} + \cX_h(f_\sigma) m \stackrel{\eqref{e.uniform.bound.sigma}}{\geq} (\eta c_0^{-1} - \sigma c_0)\norm{\xi}_x^\sigma.
\end{equation}
Thus choosing $\sigma$ small enough (precisely $0<\sigma< \eta/(2c_0^2)$), we can conclude that
\begin{equation}\label{delta.1}
    \cX_h(a) \stackrel{\eqref{bound.1}}{\geq} \frac{\eta}{2c_0}\norm{\xi}^\sigma_x, \quad \mbox{ on } T^*M \setminus \bB_{\cU}, 
\end{equation}
proving \eqref{bound.escape} in this set. Remark that it is not restrictive to suppose that $\sigma<1$, as in the statement of Proposition \ref{prop.escape}.  

We now consider the three components of the region $\bB_\mathcal{U}$ and use the properties of $m$ and $f_\sigma$ in each of them (see Lemma \ref{function.f} and Lemma \ref{order.function}). 
    \begin{itemize}
        \item \b{In the set $W(R) \cap W(R^r)\cap T^*\mathcal{U}$}, \eqref{bound.escape} is given by: 
        \begin{equation}
            \cX_h(a)=\cX_h(m) f_{\sigma} + \cX_h(f_{\sigma}) m \stackrel{\eqref{Xm.non.neg}}{\geq} \cX_h(f_{\sigma}) m \geq \frac{\gamma_1 \sigma}{2}f_{\sigma},  
        \end{equation}
        where for the last inequality we have used point $(i)$ of Lemma \ref{order.function} and point $(i)$ of Lemma \ref{function.f}. 
        This concludes, for this set, the proof of \eqref{bound.escape}. Using~\eqref{e.uniform.bound.sigma}, we have
        \begin{equation}\label{delta.2}
            \frac{\gamma_1 \sigma}{2} f_\sigma \geq \frac{\gamma_1 \sigma}{2c_0} \norm{\xi}_x^\sigma, \quad \implies \quad \cX_h(a) \geq \frac{\gamma_1 \sigma}{2c_0} \norm{\xi}_x^\sigma,  \quad \forall (x, \xi) \in W(R) \cap W(R^r)\cap T^*\mathcal{U}. 
        \end{equation}
        Remark that in this set we have not used any smallness of $\sigma$ and that this bound is valid for any $0<\sigma\leq 1$. 
        \item \b{In the set $W(A) \cap W(A^a)\cap T^*\mathcal{U}$} analogously, \eqref{bound.escape} is obtained from: 
        \begin{equation}
            \cX_h(a)=\cX_h(m) f_{\sigma} + \cX_h(f_{\sigma}) m \stackrel{\eqref{Xm.non.neg}}{\geq} \cX_h(f_{\sigma}) m \geq \frac{\gamma_2 \sigma}{2}f_{\sigma},  
        \end{equation}
         where for the last inequality we have used point $(ii)$ of Lemma \ref{order.function} and point $(ii)$ of Lemma \ref{function.f}. This concludes the proof as in previous point using~\eqref{e.uniform.bound.sigma}:
         \begin{equation}\label{delta.3}
             \cX_h(a) \geq \frac{\gamma_1 \sigma}{2} f_{\sigma} \geq \frac{\gamma_2 \sigma}{2c_0} \norm{\xi}_x^\sigma.
         \end{equation}
                 
        \item \b{In the set $W(R^r) \cap W(A^a)\cap T^*\mathcal{U}$}, we instead have to prove \eqref{bound.escape.2}. First of all remark that, from point $(iii)$ of Lemma \ref{function.f}, we have that, for every $\sigma >0$, $f_{\sigma} \equiv |h(x, \xi)|^\sigma$, thus in particular
        \begin{equation}\label{X.f}
            \cX_h(f_{\sigma})=0.
        \end{equation}
        Plugging this expression in \eqref{X.a} we obtain 
        \begin{equation}\label{almost.there}
            \cX_h(a) \stackrel{\eqref{X.a}}{=} \cX_h(m) f_{\sigma} + \cX_h(f_{\sigma}) m \stackrel{\eqref{X.f}}{=} \cX_h(m) f_\sigma \geq 0,
        \end{equation}
        from \eqref{Xm.non.neg} and the fact that $f_\sigma$ is non negative. 
        This concludes the proof, since from point $(iii)$ of Lemma \ref{order.function} $m(x, \xi)>\frac14$ in this region, thus, using~\eqref{e.uniform.bound.sigma} one more time
        \begin{equation}\label{delta.4}
            a\stackrel{\eqref{a}}{\geq}\frac14 f_\sigma \geq \frac{1}{4c_0} \norm{\xi}_x^\sigma. 
        \end{equation}
    \end{itemize}

In conclusion, we obtain, for every $\sigma>0$ small enough, \eqref{bound.escape} and \eqref{bound.escape.2} with $\delta:= \frac{\sigma}{2c_0}\min \{ \eta, \gamma_1, \gamma_2, 1/2 \}$ (see \eqref{delta.1}, \eqref{delta.2}, \eqref{delta.3} and \eqref{delta.4}).

\section{Instability and growth of Sobolev norms}
We are now in position to prove Theorem \ref{main.theorem}. The two main ingredients we use are the existence of an escape function for the Hamiltonian $h(x, \xi)= \xi(V(x))$ with $V$ a Morse-Smale vector field on $M$ (proven in Proposition \ref{prop.escape}) and some results of symbolic calculus on $M$. For this reason, before proving Theorem \ref{main.theorem}, we will start this section by reviewing some preliminary material on pseudodifferential operators in paragraph \ref{ss.pseudo}. We refer to \cite[Ch. 14]{Zworski} and~\cite[App. E]{DyatlovZworski} for an introduction to pseudodifferential calculus on compact manifolds. See also \cite[Appendix A2, A3]{Vlasov} for a brief presentation close to ours.

Once these preliminary tools are settled, we introduce a generalized transport equation~\eqref{pseudo.transport} in paragraph~\ref{ss.general.transport} for which we will prove existence of solutions whose positive Sobolev norms grow exponentially. This is achieved in two steps. We introduce first in paragraph~\ref{ss.energy.estimate} an energy type functional that is bounded from above by the Sobolev norm of the solution and we prove a growth property~\eqref{growth.A} for this functional. Then, in paragraph~\ref{ss.initial.data}, we introduce appropriate initial data such that this functional indeed blows up exponentially in time along the corresponding trajectories. This concludes the proof of Theorem~\ref{main.theorem} in the case of the generalized transport equation~\eqref{pseudo.transport}.

\subsection{Pseudodifferential operators}\label{sec.pseudo}

\paragraph{Pseudodifferential operators in $\R^n$.} We start by giving here some results of symbolic calculus in the case of pseudodifferential operators on $\R^n$, as the case of manifolds will be locally modeled on it. First, we recall the definition of the set of symbols of order $\rho \in \R$ over $\R^n$: 
\begin{equation}\label{symbolsRn}
    \bS^{\rho}(\R^{2n}):=\{ a \in C^\infty(\R^{2n}):  \forall \alpha, \beta \in \N^n \ |\pa_x^{\alpha}\pa_{\xi}^{\beta} a(x, \xi) | \leq C_{\alpha, \beta} \la \xi \ra ^{\rho -|\beta|}\}
\end{equation}
(where we denote by $\la \xi \ra:= (1 + |\xi|^2)^{\frac12}$), and the definition of the Weyl quantization of symbols $a \in \bS^{\rho}(\R^{2n})$:
\begin{equation}\label{Weyl}
    \Opw(a)[u]:= \frac{1}{(2\pi)^n} \int_{\R^n} \int_{\R^n}e^{\im(x-y)\xi} a\left(\frac{x+y}{2}, \xi\right)u(y) \di y \di \xi.
\end{equation}
This last definition makes sense apriori only for $a$ and $u$ belonging to the Schwartz class but one can show that this still makes sense for $a\in\bS^{\rho}(\R^{2n})$ and $u$ in the Schwartz class. In this setting, we say that an operator $F$ belongs to $\Psi^\rho(\R^n)$, the class of pseudodifferential operators of order $\rho$, if there exists a symbol $f \in \bS^\rho(\R^{2n})$ such that $F=\Opw(f)$. The following general results for pseudodifferential calculus over $\R^n$ hold. We refer for example to \cite{Shubin, Zworski} for proofs. 
\begin{lemma}\label{symbolic.calculus}
    Let $f \in \bS^{\rho_1}(\R^{2n})$, $g \in \bS^{\rho_2}(\R^{2n})$, $\rho_1, \rho_2 \in \R$. Then:
    \begin{enumerate}
        \item For any $s \in \R$, there exist $C_{f, s}>0$ such that: 
        \begin{equation}\label{e.cv}
          \|\Opw(f)u\|_{s-\rho_1} \leq C_{f, s}\|u\|_s,
        \end{equation}  
        where we recall the definition of Sobolev norms in \eqref{sobolev.spaces} (with $M \leadsto \R^n$); 
        \item\label{aggiunto_Weyl}
        $\Opw(f)^*=\Opw(\overline{f})$ (here $^*$ denotes the adjoint operator) and in particular skew adjoint operators have purely imaginary symbols;
        \item There exists $h \in \bS^{\rho_1+\rho_2}(\R^{2n})$ such that 
        $\Opw(f)\circ \Opw(g)=\Opw(h).$ Furthermore,
        \begin{equation}\label{comp.symbols}
            h:= f g -\frac{i}{2} \{ f, g\} \ \operatorname{mod}\ \bS^{\rho_1+\rho_2-2}(\R^{2n}); 
        \end{equation}
        \item 
        There exists $\ell  \in \bS^{\rho_1+\rho_2-1}(\R^{2n})$ such that 
        $ \im [\Opw(f),\Opw(g)] = \Opw(\ell)$. 
        Moreover
        \begin{equation}\label{comm.symbols}
        \ell:= \{ f, g\}\ \operatorname{mod}\ \bS^{\rho_1+ \rho_2-2}(\R^{2n});
        \end{equation}
        \item
        If $\rho_1<0$, then $\Opw(f)$ is compact as an operator from $L^2(\R^n)$ to itself. 
    \end{enumerate}
\end{lemma}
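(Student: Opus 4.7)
The plan is to handle the five items in an order that allows the later ones to follow from the earlier ones. The only genuinely computational core is the Moyal product formula for the Weyl composition (items 3--4); the rest reduces to it by standard manipulations.

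I would first verify item 2 by a direct kernel computation. Writing the Schwartz kernel of $\Opw(a)$ as $K_a(x,y) = (2\pi)^{-n}\int e^{\im(x-y)\xi}a((x+y)/2,\xi)\,d\xi$, the symmetry of the midpoint $(x+y)/2$ in $(x,y)$ together with complex conjugation shows that the kernel of $\Opw(a)^*$ is exactly $K_{\bar a}$. This is the specific simplification afforded by Weyl normal ordering over, say, left quantization, and it forces skew-adjoint operators to have purely imaginary symbols.

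For items 3 and 4, which form the workhorse step, I would write $\Opw(f)\Opw(g)$ as a four-fold oscillatory integral, apply stationary phase in a nondegenerate pairing of the auxiliary variables, and Taylor-expand the amplitude $f((x+y)/2,\eta)g((y+z)/2,\zeta)$ around the critical point. The constant term reproduces $fg$; the first-order terms integrate by parts to yield the factor $-\tfrac{i}{2}\{f,g\}$; and each additional order in the expansion gains two units of $\langle\xi\rangle^{-1}$, placing the remainder in $\bS^{\rho_1+\rho_2-2}(\R^{2n})$. Identity~\eqref{comm.symbols} for the commutator then follows by antisymmetrization: the symmetric leading term $fg$ cancels, the Poisson bracket doubles up with the $i/2$ factor, and the remainder stays in the same lower-order class. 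The hard part of the whole lemma is controlling this remainder uniformly in terms of the symbol seminorms of $f$ and $g$; rather than redo this bookkeeping, I would invoke the presentation in \cite[Ch.~4]{Zworski}.

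With items 2--4 in hand, items 1 and 5 become straightforward consequences. For item 1, setting $\Lambda^s := \Opw(\langle\xi\rangle^s) \in \Psi^s$ and factoring $\Opw(f) = \Lambda^{-s+\rho_1}\cdot \bigl(\Lambda^{s-\rho_1}\Opw(f)\Lambda^{-s}\bigr)\cdot \Lambda^s$, the middle factor lies in $\Psi^0$ by item 3, so the $L^2$-boundedness of zeroth-order pseudodifferential operators (Calder\'on--Vaillancourt) yields the Sobolev estimate~\eqref{e.cv}. For item 5, one approximates $f \in \bS^{\rho_1}$ with $\rho_1<0$ by truncations whose Weyl quantization is Hilbert--Schmidt, and passes to the norm limit. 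I expect this final point to carry the mildest technicality, since the passage to $L^2$-compactness on $\R^n$ requires a little care about symbol decay; in the compact-manifold applications of the next section this issue disappears because negative-order operators factor through the compact Sobolev embedding provided by the compactness of $M$.
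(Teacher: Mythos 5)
The paper does not actually prove Lemma~\ref{symbolic.calculus}; it simply cites \cite{Shubin, Zworski}. So there is no in-paper proof to compare against. Your sketch for items 1--4 is the standard and correct route: the kernel symmetry for the Weyl adjoint, the Moyal stationary-phase expansion with two-order gain per term (and the cancellation of the symmetric zeroth- and second-order terms in the commutator), and the reduction of the Sobolev mapping bound to the $L^2$-boundedness of order-zero operators via exact conjugation by the Fourier multipliers $\Lambda^s=\Opw(\langle\xi\rangle^s)$.

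The real problem is item~5, and it is not the ``mildest technicality'' you label it. As stated for the class $\bS^{\rho_1}(\R^{2n})$ defined in~\eqref{symbolsRn}, which imposes no decay in $x$, the claim is simply \emph{false} on $\R^n$. Take $f(x,\xi)=\langle\xi\rangle^{\rho_1}$ with $\rho_1<0$: then $\Opw(f)=(1-\Delta)^{\rho_1/2}$ is a Fourier multiplier commuting with translations. If $u_k(x):=u_0(x-k)$ with $u_0\in L^2\setminus\{0\}$, then $u_k\rightharpoonup 0$ weakly while $\|\Opw(f)u_k\|_{L^2}=\|\Opw(f)u_0\|_{L^2}$ stays bounded away from zero, so $\Opw(f)$ is not compact. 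Your truncation argument cannot repair this: truncating only in $\xi$ leaves a symbol that is still not in $L^2(\R^{2n})$ (so not Hilbert--Schmidt), while truncating in $(x,\xi)$ produces a remainder $(1-\chi_R)f$ whose $\bS^0$ seminorms do not tend to zero (since there is no spatial decay), so the Calder\'on--Vaillancourt bound gives no norm convergence. The correct statement, and the one the paper actually uses downstream in Lemma~\ref{manifold.symbolic}, is for compact manifolds, where negative-order operators are compact because they factor through a compact Sobolev embedding. So item~5 of the $\R^n$ lemma should be regarded as a benign erratum in the paper; your proof should either restrict item~5 to the compact-manifold setting or add a spatial-decay hypothesis on $f$, rather than trying to push it through on $\R^n$.
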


We conclude this introductory section with the following observation, that is a crucial ingredient to consider pseudodifferential operators on manifolds. Given a smooth diffeomorphism $\gamma: \R^n \to \R^n$, with bounded derivatives and such that also $\gamma^{-1}$ is bounded with bounded derivatives, one can define its symplectic lift as 
    \begin{equation}\label{symp.lift}
        \wt{\gamma}: T^*\R^n\simeq\R^{2n} \to T^*\R^n, \ (x, \xi) \to \left(\gamma^{-1}(x), \di \gamma(x)^\top \xi \right).
    \end{equation}
It is possible to prove that if $a \in \bS^\rho(\R^{2n})$, then $a \circ \wt{\gamma} \in \bS^\rho(\R^{2n})$, see for example \cite[Theorem 9.4]{Zworski}.

\paragraph{Pseudodifferential operators on a compact manifold M.}\label{ss.pseudo} 
Following \cite{Zworski}, we first fix a finite atlas for $M$, 
\begin{equation}\label{atlas}
    \{(\cU_i, \gamma_i)\}_{i=1}^N, \qquad \bigcup_{i=1}^N \cU_i = M, 
\end{equation}
where $\{\cU_i\}_{i=1}^N$ are coordinate patches and $\gamma_i: \cU_i \to V_i \subset \R^n$ are smooth homeomorphisms onto open subsets $\{V_i\}_{i=1}^N$ of $\R^n$.
We are now in position to introduce symbols on the compact manifold $M$. This definition is given for example in \cite[Sections 14.2.2, 14.2.3]{Zworski}, and the idea is that a function $a \in C^\infty(T^*M)$ is a symbol over $M$ if, in local coordinates, it corresponds to a symbol on each open subset $\gamma_i(\cU_i)$ over $\R^n$. 
\begin{definition}[Symbols on $M$]\label{def.symbols}
    We say that a smooth function $a \in C^\infty(T^*M)$ is a symbol of order $\rho$ on $M$ if, for any $i=1, \ldots, N$, the pullback of $a$ under the identification~\eqref{symp.lift} 
    $$
    V_i \times \R^n \to T^*\cU_i
    $$
    belongs to $\bS^{\rho}(T^*V_i)$. 
    Equivalently we can define the set of symbols of order $\rho \in \R$ on $M$ as:
    \begin{equation}\label{set.symbols.M}
        \bS^\rho(T^*M):=\{ a \in C^{\infty}(T^*M) : \forall \alpha, \beta \in \N^{n} \ |\pa_x^\alpha \pa_\xi^\beta a(x, \xi) | < C_{\alpha, \beta} \la \xi \ra ^{\rho - |\beta|} \},
    \end{equation}
 where the derivatives are again understood in local coordinate charts.    
\end{definition}
We remark that Definition \ref{def.symbols} does not depend on the choice of the atlas in \eqref{atlas}. We record the following property:
\begin{lemma}\label{symbols.compact}
    Let $a \in C^\infty(T^*M)$ be a compactly supported function, then $a \in \bS^\rho(T^*M)$ for any $\rho<0$.
\end{lemma}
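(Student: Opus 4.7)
The plan is to unwind the coordinate-invariant definition of $\bS^\rho(T^*M)$ and reduce the claim to a routine bound in a single chart. Fix the atlas $\{(\cU_i,\gamma_i)\}_{i=1}^N$ from \eqref{atlas}. By Definition~\ref{def.symbols}, it suffices to show that for each $i$ the pullback $\tilde a_i$ of $a$ to $V_i\times\R^n$ via the local trivialization \eqref{symp.lift} belongs to $\bS^{\rho}(T^*V_i)$ for every $\rho<0$.

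Next I would exploit the compact support hypothesis. Since $a$ is compactly supported in $T^*M$, there is a compact set $K\subset T^*M$ containing $\operatorname{supp}(a)$. Its image under the coordinate diffeomorphism $T^*\cU_i\to V_i\times\R^n$ is still compact, so there exists $R_i>0$ such that $\operatorname{supp}(\tilde a_i)\subset (V_i\cap\pi(K))\times\overline{B(0,R_i)}$. Thus $\tilde a_i$ is a smooth compactly supported function on $V_i\times\R^n$, and every derivative $\partial_x^\alpha\partial_\xi^\beta\tilde a_i$ is also smooth and compactly supported, hence bounded by some constant $C_{\alpha,\beta,i}$.

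To finish, I would compare $C_{\alpha,\beta,i}$ with the target weight $\langle\xi\rangle^{\rho-|\beta|}$. On the support of $\tilde a_i$, the $\xi$-variable lies in the compact set $\overline{B(0,R_i)}$, on which the continuous positive function $\langle\xi\rangle^{\rho-|\beta|}$ attains a strictly positive minimum $c_{\beta,i}>0$; therefore
\[
|\partial_x^\alpha\partial_\xi^\beta\tilde a_i(x,\xi)|\leq C_{\alpha,\beta,i}\leq \frac{C_{\alpha,\beta,i}}{c_{\beta,i}}\langle\xi\rangle^{\rho-|\beta|}
\]
on $\operatorname{supp}(\tilde a_i)$, and the bound is trivial off $\operatorname{supp}(\tilde a_i)$ since the left-hand side vanishes there. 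Taking the maximum over the finitely many charts yields the symbol estimates \eqref{set.symbols.M} uniformly on $M$. The only subtlety worth pointing out is that $\rho<0$ is not needed for the argument itself, it simply records the case useful for later compactness statements (cf.\ item~5 in Lemma~\ref{symbolic.calculus}); the proof shows $a\in\bS^\rho(T^*M)$ for every $\rho\in\R$. I do not expect any genuine obstacle here beyond bookkeeping with the finite atlas.
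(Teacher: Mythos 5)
Your proof is correct, and it is the natural argument: localize to the finite atlas, note that compact support forces the $\xi$-variable to range over a bounded set on which all derivatives of the pullback are bounded and the weight $\langle\xi\rangle^{\rho-|\beta|}$ is bounded below, and conclude by taking the maximum over the finitely many charts. The paper states this lemma without proof (it is a standard observation), so there is no alternative argument to compare against; your remark that the hypothesis $\rho<0$ is not actually used in the estimate and only records the case relevant for compactness is accurate.
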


Following~\cite{Zworski}, we now set

\begin{definition}[Pseudodifferential operators of order $\rho$]\label{def.pseudodiff}
    We say that a linear operator $A: C^\infty(M) \to C^\infty(M)$ is a pseudodifferential operator of order $\rho$ on $M$ if the following two conditions hold: 
    \begin{enumerate}
        \item for every coordinate patch $\cU_i$ there exists a symbol $a_i \in \bS^{\rho}(\R^{2n})$ such that 
    \begin{equation}
        \varphi A(\psi u) = \varphi \gamma_i^* \Opw(a_i)(\gamma^{-1}_i)^*(\psi u), \quad \forall \varphi, \psi \in C_c^{\infty}(\cU_i), \ \forall u \in C^\infty(M),
    \end{equation}
    where $\gamma_i$ is the homemorphism associated to $\cU_i$ (see \eqref{atlas}), $^*$ denotes the pullback operator and $\Opw(a_i)$ is defined as in \eqref{Weyl};
    \item for every $\varphi_1, \varphi_2 \in C^\infty(M)$ such that $\mbox{supp}(\varphi_1) \cap \mbox{supp}(\varphi_2) = \emptyset$, the operator 
        $\varphi_1 A \varphi_2$ is bounded from $\cH^{-k}(M)$ to $\cH^k(M)$ for every $k \in \N$ (recall \eqref{sobolev.spaces}).  
    \end{enumerate}
We denote by $\Psi^{\rho}(M)$ the class of pseudodifferential operators of order $\rho$ on $M$. 
\end{definition}

It is non-trivial to associate to each symbol in $\bS^\rho(T^*M)$ an operator in $\Psi^\rho(M)$ and vice versa (see \cite[Theorem 14.1]{Zworski}). Following~\cite[Sect. 4]{Bonthonneau} (see also~\cite[App. A]{Vlasov}) and in view of simplifying some aspects of the discussion, we now assume that the atlas in \eqref{atlas} is an isochore atlas in the sense of the following definition:
\begin{definition}[Isochore atlas]\label{def.isochore}
    We say that the atlas $\{(\cU_i, \gamma_i)\}_{i=1}^N$ is \emph{isochore} if
    \begin{equation}
        \gamma_i^*(\di \mu_g)= \mbox{Leb}_{\R^n}, \quad \forall \ i=1, \ldots, N,
    \end{equation}
    where $\di \mu_g$ is the Riemmanian volume form on $M$ and Leb is the Lebesgue measure.  
\end{definition}
Roughly speaking, Definition \ref{def.isochore} 
can be rephrased saying that an atlas is isochore if, for every change of coordinates between two patches $\cU_i$ and $\cU_j$, the determinant of the Jacobian matrix related to such transformation is equal to one. It is a general result that there always exists an isochore atlas on a compact smooth and oriented Riemmanian manifold (see \cite{Moser}). Next, we fix a partition of unity $\{\chi_i\}_{i=1}^N$ subordinated to the atlas, i.e., 
\begin{equation}
    \sum_{i=1}^N\chi_i^2(x)=1, \quad \forall x \in M, \ \mbox{ and } \ \chi_i \in C^\infty_c(\cU_i; [0, 1]), \, \forall i=1, \ldots, N. 
\end{equation}
Finally, following~\cite[Ch. 14]{Zworski}, we can define the quantization of a symbol $a \in \bS^\rho(T^*M)$:
\begin{equation}\label{quant.manifold}
    \OpM(a):= \sum_{i=1}^N \chi_i \gamma_i^* \Opw((\wt\gamma_i^{-1})^*(\wt\chi_i a))(\gamma_i^{-1})^*\chi_i,
\end{equation}
where we recall the notation in \eqref{symp.lift} for $\wt{\gamma_i}$ and where $\wt\chi_i\in C^\infty_c(\cU_i; [0, 1])$ is such that $\wt\chi_i\chi_i=\chi_i$.

\begin{remark}
    Let us briefly motivate the expression in \eqref{quant.manifold}. Given a symbol $a \in \bS^\rho(T^*M)$, in order to define its quantization we need to use its expression in local charts to rewrite it as sum of symbols over $\R^n$. Indeed, each $\{(\wt{\gamma}_i^{-1})^*(\chi_i a)\}_{i=1}^N$ is a symbol over $\R^{2n}$. Thus we can quantize it using the Weyl quantization over $\R^{n}$ (see \eqref{Weyl}), and then go back to the manifold localizing in the desired chart, thanks to the composition with $(\gamma_i^{-1})^*\chi_i$ and its inverse. 
\end{remark}

We now state the main result related to this choice of the quantization and of an isochore atlas: roughly speaking, the quantization of a symbol is independent on the choice of charts, up to lower order remainders. Precisely, we have the following result (see \cite{Bonthonneau} or \cite[Remark A.2]{Vlasov}, to which we refer for a proof): 

\begin{lemma}
    Let $a \in \bS^\rho(T^*M)$ be a symbol of order $\rho$ over $M$ and let $\{( \cU_i, \gamma_i)\}_{i=1}^N$ be an isochore atlas in the sense of Definition \ref{def.isochore}. Let $\OpM(a)$ be the quantization of $a$ in the sense of \eqref{quant.manifold}. Then the principal symbol of $\OpM(a)$ is well defined and independent of the chart, up to symbols in $\bS^{\rho-2}(M)$. 
\end{lemma}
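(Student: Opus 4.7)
The plan is to show that if $\OpM$ and $\OpM'$ denote two instances of the quantization~\eqref{quant.manifold} built from two different isochore atlases and subordinated partitions of unity, then, for every $a \in \bS^\rho(T^*M)$, the difference $\OpM(a) - \OpM'(a)$ is a pseudodifferential operator whose symbol in any chart lies in $\bS^{\rho-2}$. This reduces the global problem to a purely local computation in $\R^n$ concerning the Weyl quantization and changes of coordinates.

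First, I would localize. Using bilinearity of~\eqref{quant.manifold}, the two partitions of unity, and item (2) of Definition~\ref{def.pseudodiff} (which ensures that compositions with disjointly supported cutoffs produce smoothing operators), the problem reduces to the following local statement: for any diffeomorphism $\kappa: V \to V'$ between open subsets of $\R^n$ with $|\det \di\kappa| \equiv 1$ and any compactly supported symbol $b \in \bS^\rho(\R^{2n})$, one has
\begin{equation}
\kappa^{*} \circ \Opw(b) \circ (\kappa^{-1})^{*} = \Opw(b \circ \widetilde{\kappa}) + \Opw(r), \qquad r \in \bS^{\rho-2}(\R^{2n}),
\end{equation}
where $\widetilde{\kappa}$ is the symplectic lift of $\kappa$ as in~\eqref{symp.lift}. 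The hypothesis $|\det \di\kappa| \equiv 1$ is exactly what allows the remainder to gain an extra order compared with the general case.

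The proof of this local statement is via the Schwartz kernel of $\kappa^{*} \circ \Opw(b) \circ (\kappa^{-1})^{*}$, written as an oscillatory integral, followed by a Taylor expansion of the phase $\langle \kappa(x) - \kappa(y), \xi\rangle$ around the diagonal $x = y$. Writing $\kappa(x) - \kappa(y) = M(x,y)(x-y)$ with $M(x,y) := \int_0^1 \di \kappa(t x + (1-t) y)\, \di t$ and performing the change of variables $\eta := M(x,y)^\top \xi$ (whose Jacobian equals $|\det M(x,y)|^{-1}$, which is $1$ on the diagonal by the isochore condition) produces an oscillatory integral whose leading term is precisely $\Opw(b \circ \widetilde{\kappa})$. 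A stationary phase / Kuranishi-type expansion then controls the subleading terms.

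The main obstacle is exactly this expansion: for a general diffeomorphism one only obtains a $\bS^{\rho-1}$ remainder, and the improvement to $\bS^{\rho-2}$ combines two features. The midpoint evaluation in the Weyl quantization~\eqref{Weyl} kills the order $\rho-1$ contribution that would otherwise appear for the standard (left or right) quantization, while the volume preservation of $\kappa$ kills the order $\rho-1$ contribution arising from the Jacobian factor in the change of variables. Once the local statement is established, reassembling the atlas-level quantizations through the partitions of unity and disposing of the cross-terms via item (2) of Definition~\ref{def.pseudodiff} yields the lemma.
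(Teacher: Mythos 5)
The paper does not prove this lemma at all: immediately before the statement it defers to the references \cite{Bonthonneau} and \cite[Remark A.2]{Vlasov}. So there is no ``paper's own proof'' to compare against literally. What I can say is that your strategy is the standard (and correct) route to this result, and you have identified exactly the two features that produce the extra order of gain, namely the midpoint evaluation of the Weyl quantization~\eqref{Weyl} and the unimodularity of the chart transitions coming from Definition~\ref{def.isochore}.

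Let me spell out why your diagnosis of the local statement is right, since it is the crux. Write the Schwartz kernel of $\kappa^{*}\Opw(b)(\kappa^{-1})^{*}$ as an oscillatory integral; since $|\det \di\kappa|\equiv 1$, the pullback does not introduce any Jacobian factor and the kernel is just $K_b(\kappa(x),\kappa(y))$. After the Kuranishi substitution $\eta = M(x,y)^{\top}\xi$ with $M(x,y):=\int_0^1 \di\kappa(tx+(1-t)y)\,dt$, the kernel becomes
\begin{equation}
\frac{1}{(2\pi)^n}\int e^{i(x-y)\cdot \eta}\, b\!\left(\tfrac{\kappa(x)+\kappa(y)}{2},\, M(x,y)^{-\top}\eta\right)|\det M(x,y)|^{-1}\,d\eta .
\end{equation}
Comparing with the Weyl quantization of $b\circ\widetilde\kappa$, the three discrepancies are: $\tfrac{\kappa(x)+\kappa(y)}{2}-\kappa(\tfrac{x+y}{2})$, $M(x,y)-\di\kappa(\tfrac{x+y}{2})$, and $|\det M(x,y)|^{-1}-1$. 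The first two are $O(|x-y|^2)$ by the midpoint symmetry of the Weyl quadrature (parametrize $M$ by $s=t-1/2$ and note $\int_{-1/2}^{1/2}s\,ds=0$), and the third is $O(|x-y|^2)$ only because $\det\di\kappa\equiv 1$; factors of $(x-y)^2$ are traded for $\partial_\eta^2$ via integration by parts, yielding a remainder in $\bS^{\rho-2}$. For a non-isochore $\kappa$ the third discrepancy is $O(|x-y|)$, and for a non-Weyl quantization the first two are $O(|x-y|)$, so one genuinely needs both hypotheses, as you claim.

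Two small points you should tighten. First, ``stationary phase'' is the wrong name for the step you describe: there is no stationary phase here, only a Taylor expansion in $(x-y)$ converted into $\partial_\eta$ derivatives by integration by parts in the oscillatory integral (the Kuranishi trick itself is the only change of variables). Second, the patching step needs one more ingredient that you do not mention: after expressing $\OpM(a)-\OpM'(a)$ chart by chart, there are commutator contributions of order $\rho-1$ coming from the cutoffs $\chi_i$ in~\eqref{quant.manifold}, and these cancel precisely because the partition of unity is chosen so that $\sum_i\chi_i^2=1$ (so $\sum_i\chi_i\nabla\chi_i=0$). Pseudolocality (item (2) of Definition~\ref{def.pseudodiff}) only disposes of the disjointly-supported cross-terms; the overlapping terms require this cancellation.
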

 We also record the following useful properties (see~\cite[Remark A.3]{Vlasov} for a proof of the second point):
\begin{lemma}\label{order.one.symbol}
\begin{enumerate}
    \item Let $f \in C^\infty(M)$ be a smooth function on $M$. Then $f \in \bS^0(T^*M)$ and  $\OpM(f) \in \Psi^0(M)$ is a multiplicative operator with factor $f$, i.e.,
    \[
    \OpM(f)u(x):= f(x) u(x), \quad \forall x \in M, \, \forall u \in C^\infty(M).
    \]
    \item Let $X \in \fX(M)$ be a smooth vector field on $M$, then 
    \begin{equation}\label{vf.quant}
        \cL_Xu(x)=\OpM(i \xi (X(x)) + r(x))u(x), \quad \forall u \in C^\infty(M),
    \end{equation}
    where $r \in \bS^{0}(T^*M)$ is a smooth function on $M$.  
\end{enumerate}
\end{lemma}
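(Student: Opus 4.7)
For item (1), compactness of $M$ makes any $f\in C^\infty(M)$ and all its derivatives uniformly bounded, and since $f$ carries no $\xi$-dependence, the symbol estimates in \eqref{set.symbols.M} hold trivially with $\rho=0$; hence $f\in\bS^0(T^*M)$. To identify $\OpM(f)$ I would invoke the elementary fact that in $\R^n$ the Weyl quantization \eqref{Weyl} of a symbol $b(y)$ depending only on position reduces to multiplication by $b$, a consequence of $(2\pi)^{-n}\int e^{i(y-y')\eta}d\eta=\delta(y-y')$ together with $b\bigl((y+y')/2\bigr)\delta(y-y')=b(y)\delta(y-y')$. Pushing this identity through each term of \eqref{quant.manifold} — and noting that the symplectic pullback $\wt\gamma_i$ preserves the $\xi$-independence of an $x$-only symbol — collapses every summand to multiplication by $\chi_i^2\,\wt\chi_i f$; the partition-of-unity identities $\wt\chi_i\chi_i=\chi_i$ and $\sum_i\chi_i^2=1$ then yield $\OpM(f)u=fu$.

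For item (2), the symbol $i\xi(X(x))$ is linear in $\xi$ with smooth, uniformly bounded coefficients, so $i\xi(X)\in\bS^1(T^*M)$. The strategy I would follow is to show that $\cL_X-\OpM(i\xi(X))$ is multiplication by a smooth function $-r\in C^\infty(M)$; combined with item (1) and linearity of $\OpM$ this gives \eqref{vf.quant}. The conceptually cleanest argument observes that $\cL_X$ and $\OpM(i\xi(X))$ are both first-order differential operators on $M$ — the latter because $\Opw$ of a polynomial in $\eta$ with smooth coefficients is a differential operator in each chart, and the cutoff-and-glue construction \eqref{quant.manifold} preserves the class of differential operators — and that they share the principal symbol $i\xi(X)$. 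Their difference is therefore a differential operator of order zero, i.e.\ multiplication by a smooth function.

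If one prefers a hands-on derivation, the key local computation is that integration by parts in the oscillatory integral \eqref{Weyl} yields, in any chart,
\begin{equation*}
\Opw\bigl(i\,\eta\cdot X^{\mathrm{loc}}(y)\bigr)v=\cL_{X^{\mathrm{loc}}}v+\tfrac12\,\mathrm{div}_{\mathrm{Eucl}}(X^{\mathrm{loc}})\,v.
\end{equation*}
Since the atlas is isochore (Definition~\ref{def.isochore}), the Euclidean divergence matches $\mathrm{div}_{\upsilon_g}$ expressed in coordinates. The main obstacle in this direct route is the bookkeeping when reassembling \eqref{quant.manifold}: one must use the derivation property $\cL_X(\chi_iu)=(\cL_X\chi_i)u+\chi_i\cL_Xu$, together with $\sum_i\chi_i^2=1$ and $\wt\chi_i\chi_i=\chi_i$, to check that the first-order contributions recombine into $\cL_Xu$ while the zeroth-order terms — from the divergence piece, from the extra cutoff factors, and from the commutators $[\cL_X,\chi_i]$ produced by Leibniz — glue into a single globally-defined smooth $r\in C^\infty(M)$. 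The principal-symbol argument above sidesteps precisely this bookkeeping and is therefore the route I would prioritize.
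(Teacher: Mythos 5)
The paper states this lemma without proof — it cites \cite[Remark A.3]{Vlasov} for item (2) and offers nothing for item (1) — so there is no in-text argument to measure your proposal against, only correctness. Your proof is correct. For item (1), the chain of observations (position-only symbols pull back to position-only symbols under the symplectic lift; Weyl quantization of such a symbol is multiplication by the corresponding function; the identities $\wt\chi_i\chi_i=\chi_i$ and $\sum_i\chi_i^2=1$ collapse \eqref{quant.manifold} to $fu$) is exactly the standard verification. For item (2), your principal-symbol route is the cleaner of the two and works: $\OpM(i\xi(X))$ is a globally defined first-order differential operator with principal symbol $i\xi(X)$ by the lemma that precedes the statement in the paper, so $\cL_X-\OpM(i\xi(X))$ is a zeroth-order differential operator, hence multiplication by a globally defined smooth function; item (1) and linearity of $\OpM$ then give \eqref{vf.quant}. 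One trivial sign slip: since the lemma writes $\cL_X=\OpM(i\xi(X)+r)$ and item (1) gives $\OpM(r)=r\cdot$, the difference $\cL_X-\OpM(i\xi(X))$ is multiplication by $r$, not by $-r$. Your hands-on alternative via $\Opw(i\eta\cdot X^{\mathrm{loc}})=\cL_{X^{\mathrm{loc}}}+\tfrac12\,\mathrm{div}(X^{\mathrm{loc}})$ together with the isochore-atlas identification of the local Euclidean divergence with $\mathrm{div}_{\upsilon_g}$ is also sound, and is very likely what the cited remark carries out; the principal-symbol route simply spares you the partition-of-unity bookkeeping while still delivering that $r$ depends on $x$ alone.
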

We now state the main properties of pseudodifferential operators that we will need in our proof. We proceed in analogy with the ones of Lemma \ref{symbolic.calculus} on $\R^n$. 
\begin{lemma}\label{manifold.symbolic}
    Let $a \in \bS^{\rho_1}(T^*M)$ and $b \in \bS^{\rho_2}(T^*M)$, with $\rho_1, \rho_2 \in \R$. Then 
    \begin{enumerate}
        \item For any $s \in \R$ there exists $C_{a, s} >0$ such that
        \begin{equation}\label{C.V.manifold}
            \norm{\OpM(a) u }_{s -\rho_1} \leq C_{a,s} \norm{u}_s;
        \end{equation}

        \item $\OpM(a)^*=\OpM(\overline{a})$ (here $^*$ denotes the adjoint operator) and in particular purely imaginary  symbols correspond to skew adjoint operators; 

        \item There exists a symbol $c \in \bS^{\rho_1+\rho_2}(T^*M)$ such that $\OpM(c)=\OpM(a) \circ \OpM(b)\ \operatorname{mod}\Psi^{-\infty}(M)$. Moreover, the operator $\OpM(c)$ is given by 
        $$
        \OpM(c)=\OpM\left(ab +\frac{1}{2i} \{ a, b\} \right)\ \operatorname{mod}\ \Psi^{\rho_1+ \rho_2-2}(M);
        $$

        \item The symbol for the commutators between $\OpM(a)$ and $\OpM(b)$ is given by
        \begin{equation}\label{p.bracket}
            [\OpM(a), \OpM(b)]= -i \OpM(\{a, b\})\ \operatorname{mod} \Psi^{\rho_1+\rho_2-2}(M); 
        \end{equation}
        \item if $\rho_1<0$, then $\OpM(a)$ is a compact operator.
    \end{enumerate}
\end{lemma}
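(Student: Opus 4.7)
The plan is to reduce each of the five assertions in Lemma~\ref{manifold.symbolic} to its counterpart in Lemma~\ref{symbolic.calculus} on $\R^n$, exploiting the explicit form of the quantization~\eqref{quant.manifold}, the isochore property of the chosen atlas (Definition~\ref{def.isochore}), and the pseudolocality of Weyl-quantized operators. A preliminary observation I would record is that, because the atlas is isochore, the Riemannian $L^2$ pairing on $M$ restricted to a coordinate patch $\cU_i$ is pushed forward to the flat Lebesgue $L^2$ pairing on $\gamma_i(\cU_i)\subset\R^n$, so that $(\gamma_i^*)^*=(\gamma_i^{-1})^*$ as maps between the corresponding $L^2$ spaces. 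Similarly, the Sobolev norms $\|\cdot\|_s$ from~\eqref{sobolev.spaces} are equivalent, on functions supported in $\cU_i$, to the standard Sobolev norms of their pushforwards under $\gamma_i$.

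With this in hand, item~1 follows by writing $\OpM(a)u=\sum_i\chi_i\gamma_i^*\Opw(\wt a_i)(\gamma_i^{-1})^*(\chi_i u)$ with $\wt a_i:=(\wt\gamma_i^{-1})^*(\wt\chi_i a)\in\bS^{\rho_1}(\R^{2n})$ and invoking~\eqref{e.cv} chart by chart. Item~2 is handled by taking the formal adjoint in each chart, using $\Opw(f)^*=\Opw(\overline f)$ together with the isochore identification of adjoints, and then appealing to the chart-independence of the quantization recorded after~\eqref{quant.manifold} to recombine the pieces into $\OpM(\overline a)$ modulo lower order. Item~5 follows by composing the boundedness $\OpM(a):L^2(M)\to\cH^{-\rho_1}(M)$ from item~1 with the Rellich compact embedding $\cH^{-\rho_1}(M)\hookrightarrow L^2(M)$, which holds because $\rho_1<0$ and $M$ is compact.

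The bulk of the work lies in items~3 and~4. I would expand $\OpM(a)\circ\OpM(b)$ as a double sum over chart indices. The diagonal contributions $i=j$ are handled by the composition formula~\eqref{comp.symbols} on $\R^n$, which after conjugation with $(\gamma_i^{-1})^*\chi_i$ and $\chi_i\gamma_i^*$ produces the Weyl quantization of $\wt a_i\wt b_i-\frac{i}{2}\{\wt a_i,\wt b_i\}$ modulo $\bS^{\rho_1+\rho_2-2}$, and these pieces reassemble into the intrinsic symbol $ab-\frac{i}{2}\{a,b\}$ on $T^*M$ since the Poisson bracket is intrinsically defined by the symplectic structure. Off-diagonal pairs $i\neq j$ with $\cU_i\cap\cU_j=\emptyset$ are smoothing by the second clause of Definition~\ref{def.pseudodiff}, while overlapping but distinct charts are handled by transporting $\Opw(\wt b_j)$ into the $i$-th chart via the symplectic lift of the transition map $\gamma_i\circ\gamma_j^{-1}$ and using the pseudolocal fact that Weyl operators sandwiched between cutoffs with disjoint supports are smoothing. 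Item~4 then follows from item~3 by subtracting the analogous formula for $\OpM(b)\circ\OpM(a)$ and exploiting antisymmetry of the Poisson bracket to cancel the leading $ab$ term.

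The main obstacle is precisely the off-diagonal analysis in item~3: the overlapping chart contributions must reorganize, modulo $\Psi^{\rho_1+\rho_2-2}(M)$, into an intrinsic symbol on $T^*M$ rather than something depending on the chosen partition of unity. The isochore normalization is essential here, since otherwise Jacobian determinants would pollute the subprincipal term, and one must combine this with the Egorov-type invariance of principal and subprincipal symbols of Weyl quantizations under symplectic coordinate changes. Given that the excerpt already records chart-independence of the principal symbol modulo $\bS^{\rho-2}$, I expect this step to be technical but ultimately routine, and the remaining assertions then drop out without additional input.
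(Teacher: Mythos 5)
The paper itself does not prove Lemma~\ref{manifold.symbolic}; it is presented as standard and deferred to \cite{Zworski, DyatlovZworski, Bonthonneau, Vlasov}, with the single editorial remark after the statement that ``item 2 follows from our choice of isochore charts.'' Your reduction to the $\R^n$ calculus chart by chart, with the off-diagonal terms absorbed via disjoint-support smoothing and the chart-change invariance of (sub)principal symbols, is indeed the standard route, and items 1, 3, 4, 5 are argued along the lines one would expect.

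However, your handling of item 2 contains a genuine error. You conclude $\OpM(a)^*=\OpM(\overline a)$ only ``modulo lower order,'' invoking chart-independence of the quantization modulo $\bS^{\rho-2}$. In fact the adjoint identity holds \emph{exactly}, with no remainder, and this exactness is precisely what the isochore normalization buys and what the paper uses downstream: in~\eqref{e.normL2} the exact skew-adjointness of $\OpM(i\xi(V+\e\cP)+ib_{\e,1})$ is what makes the $L^2$-norm evolve only through the $b_{\e,2}$ term, and in Theorem~\ref{main.theorem} the $L^2$-conservation for~\eqref{transport.eq} when $b_{\e,2}=0$ is stated as an exact identity, not up to lower-order corrections. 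The mechanism is simpler than what you sketch: since each chart is isochore, $\gamma_i^*$ is a \emph{unitary} map $L^2(V_i,\mathrm{Leb})\to L^2(\cU_i,\upsilon_g)$, so $(\gamma_i^*)^*=(\gamma_i^{-1})^*$ exactly. Applying this and $\Opw(f)^*=\Opw(\overline f)$ to each summand of~\eqref{quant.manifold}, and using that $\chi_i,\widetilde\chi_i$ are real, one gets term by term
$$
\left(\chi_i\gamma_i^*\Opw\bigl((\widetilde\gamma_i^{-1})^*(\widetilde\chi_i a)\bigr)(\gamma_i^{-1})^*\chi_i\right)^*
=\chi_i\gamma_i^*\Opw\bigl((\widetilde\gamma_i^{-1})^*(\widetilde\chi_i \overline a)\bigr)(\gamma_i^{-1})^*\chi_i,
$$
and summing over $i$ gives $\OpM(a)^*=\OpM(\overline a)$ on the nose. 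No recombination or invariance-modulo-$\bS^{\rho-2}$ argument is required; the chart-independence lemma you cite concerns how the quantization depends on the partition of unity, which is a separate (and genuinely lower-order) issue.
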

Observe that item 2 follows from our choice of isochore charts. Finally, we also need the following G\aa{}rding inequality:
\begin{lemma}\label{Garding}
    Let $a \in \bS^\rho(T^*M)$, $a \geq 0$. Then, for every $u \in C^\infty(M)$, we have
    \begin{equation}
        \la \OpM(a)u, u \ra_{L^2(M)}\geq - C_{\rho, a}\norm{u}^2_{\cH^{\frac{\rho-1}{2}}(M)}.
    \end{equation}
\end{lemma}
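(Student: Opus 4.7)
The plan is to reduce the inequality to the classical sharp G\aa{}rding inequality on $\R^n$ through the partition-of-unity definition of $\OpM$ in \eqref{quant.manifold}, exploiting the isochore property of the atlas so that no Jacobian factors have to be tracked.

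The core analytic input is the Euclidean sharp G\aa{}rding inequality: for any $b\in\bS^\rho(\R^{2n})$ with $b\geq 0$ and any $v\in\mathcal{S}(\R^n)$, one has $\langle \Opw(b)v,v\rangle_{L^2(\R^n)} \geq -C_b\|v\|^2_{(\rho-1)/2}$. I would recall its standard proof via the Berezin/anti-Wick quantization $\operatorname{Op}^{\mathrm{AW}}(b)$: the latter is manifestly non-negative whenever $b\geq 0$, since it is an integral operator against a Gaussian-weighted positive kernel, and a direct stationary phase computation identifies $\Opw(b)-\operatorname{Op}^{\mathrm{AW}}(b)=\Opw(r)$ for some $r\in\bS^{\rho-1}(\R^{2n})$. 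Combining the non-negativity of $\operatorname{Op}^{\mathrm{AW}}(b)$ with the continuity estimate \eqref{e.cv} applied to $\Opw(r)$ yields the inequality.

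To transfer this to $M$, I unfold the definition \eqref{quant.manifold}. Since the atlas is isochore (Definition~\ref{def.isochore}), the pull-back $\gamma_i^*$ is an $L^2$-isometry on functions supported in $\cU_i$. Setting $v_i:=(\gamma_i^{-1})^*(\chi_i u)$ (compactly supported in $V_i$, extended by zero to $\R^n$) and $b_i:=(\widetilde\gamma_i^{-1})^*(\widetilde\chi_i a)$, a direct computation gives
\[
\langle\OpM(a)u,u\rangle_{L^2(M)} \;=\; \sum_{i=1}^N \langle \Opw(b_i) v_i, v_i\rangle_{L^2(\R^n)}.
\]
Because $a\geq 0$ and $\widetilde\chi_i\geq 0$, each $b_i$ is a non-negative symbol on $\R^{2n}$, and the Euclidean inequality applied term-by-term gives a lower bound by $-\sum_i C_i\|v_i\|^2_{(\rho-1)/2}$.

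It then remains to absorb the $v_i$-norms into $\|u\|_{\cH^{(\rho-1)/2}(M)}$: since $\gamma_i$ is a smooth diffeomorphism with bounded derivatives and bounded inverse, $(\gamma_i^{-1})^*$ preserves the Sobolev regularity of compactly supported functions up to a multiplicative constant, and multiplication by $\chi_i\in C^\infty_c(\cU_i)$ is bounded on $\cH^{s}(M)$ for every $s\in\R$. This gives $\|v_i\|_{(\rho-1)/2}\leq C\|u\|_{(\rho-1)/2}$ for every $i$, and summation completes the proof. The only genuinely difficult step is the Euclidean sharp G\aa{}rding inequality itself; once the isochore charts are used, the passage to the manifold is essentially bookkeeping, which is precisely why the quantization \eqref{quant.manifold} was designed this way.
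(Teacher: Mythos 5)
Your argument is correct, and it is precisely the route the paper has in mind: the paper's proof is the one-line remark ``We refer to [Zworski, Th.~9.11] for a proof in $\R^n$ from which the case on $M$ follows,'' and your write-up fills in exactly that reduction — unfolding the quantization~\eqref{quant.manifold}, using the isochore property to identify $\gamma_i^*$ as an $L^2$-isometry so that $\langle\OpM(a)u,u\rangle_{L^2(M)}=\sum_i\langle\Opw(b_i)v_i,v_i\rangle_{L^2(\R^n)}$ with non-negative localized symbols $b_i$, applying the Euclidean sharp G\aa{}rding term by term, and absorbing $\|v_i\|_{(\rho-1)/2}$ into $\|u\|_{(\rho-1)/2}$ by chart invariance and boundedness of multiplication by $\chi_i$.
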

We refer to \cite[Theorem 9.11]{Zworski} for a proof in $\R^n$ from which the case on $M$ follows. 

\subsection{Generalized transport equations}\label{ss.general.transport}
In the rest of this section, we prove our main result, namely Theorem \ref{main.theorem}. The starting point is to rewrite the transport equation \eqref{transport.eq} in terms of the symbols associated to pseudodifferential operators in view of using the above microlocal tools. Precisely, using Lemma \ref{order.one.symbol}, we can in fact consider more generally the transport like equation: 
\begin{equation}\label{pseudo.transport}
    \pa_t u_\e = \OpM(i \xi(V(x) + \e \cP_\e(t, x))) u_\e + \OpM(ib_\e(t,x, \xi))u_\e, \quad x \in M, \ t \in \R, \ \e >0, 
\end{equation}
where 
\begin{itemize}
\item $V$ belongs to $\fX^{MS}(M)$,
\item $\mathcal{P}_\e(t,x)$ is a smooth time dependent vector field belonging to $\mathcal{C}^0_b(\R; \fX(M))$ with all semi-norms uniformly bounded in terms of $0\leq\e\leq 1$,
\item there exists $\rho<1$ such that $b_{\e,1}(t,x,\xi):=\text{Re}\,b_\e(t,x,\xi) \in C^0_b(\R; \bS^\rho(T^*M))$ with all seminorms uniformly bounded in terms of $0\leq\e\leq 1$,
\item $b_{\e,2}(t,x):=\e^{-1}\text{Im}\,b_\e(t,x) \in C^0_b(\R; \bS^0(T^*M))$ with all seminorms uniformly bounded in terms of $0\leq\e\leq 1$.
\end{itemize}
Note that the function $b_\e(t,x,\xi)$ appearing in Theorem~\ref{main.theorem} would not depend on $\xi$ (thanks to Lemma \ref{order.one.symbol}) and that it would be of order $0$ (and proportional to $\e$). Yet, as our proof allows to handle this more general case, we consider such a general $b_\e$.

\begin{remark}According to~\cite[Th.~1.2]{MasperoRobert}, one has that, for any $\sigma\geq 0$ and for any $u_0\in\mathcal{H}^\sigma(M)$, the solution $u(t,x)$ exists globally in time and belongs to $\mathcal{C}^0(\R; \mathcal{H}^\sigma(M))$ when $b_{\e,2}\equiv 0$. Here, we want to allow this extra selfadjoint term and we postpone the existence of the solution to Appendix~\ref{a:existence} following the results from this reference and classical perturbative arguments from ordinary differential equations.
\end{remark}

Our main result reads 
\begin{theorem}\label{main.theorem.pseudo} Suppose that the above assumptions on $V$, $\mathcal{P}_\e$ and $b_\e$ hold and, if $n=1$, suppose in addition that $V$ has at least one critical point.

There exists $\delta_{0}>0$ such that, for every $0\leq \sigma\leq \delta_0$ and for every $0\leq \e \leq \delta_0\sigma^2$, one can find $v_\e\in\mathcal{H}^\sigma(M)$ and $r:=r(\e,\sigma,v_\e)>0$ such that 
\begin{equation}\label{growth.pseudo}
\|u_\e(0)-v_\e\|_{\sigma}\leq r\quad \implies\quad   \norm{u_\e(t)}_\sigma \geq \delta_0e^{\delta_0\sigma t}\|u_\e(0)\|_\sigma,\quad \text{for any}\ t>0,
\end{equation} 
where $u_\e(t)$ is the solution to equation \eqref{pseudo.transport} with initial datum $u_\e(0).$
\end{theorem}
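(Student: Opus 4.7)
The plan is to prove Theorem~\ref{main.theorem.pseudo} by a positive commutator argument built on the escape function $a$ of Proposition~\ref{prop.escape}. I would introduce the quadratic functional
\begin{equation*}
\mathcal{A}(t):=-\langle \OpM(a)u_\e(t),u_\e(t)\rangle,
\end{equation*}
which is designed to be positive (and large) when $u_\e(t)$ has a substantial frequency content in the open conic set $C\subset T^*M\setminus\{0\}$ where $a\leq -\tfrac{c_0}{2}\|\xi\|_x^\sigma$. Since $a\in\bS^{\sigma}(T^*M)$, Lemma~\ref{manifold.symbolic}(1) together with Cauchy--Schwarz yields $\mathcal{A}(t)\leq C\|u_\e(t)\|_\sigma\|u_\e(t)\|_{L^2}$; combined with the fact that the generator of~\eqref{pseudo.transport} is skew-adjoint up to the $\e$-small self-adjoint term $-\e\OpM(b_{\e,2})$, so that $\|u_\e(t)\|_{L^2}\leq e^{C\e t}\|u_\e(0)\|_{L^2}$ (see Appendix~\ref{a:existence}), this translates any exponential lower bound for $\mathcal{A}(t)$ into the desired lower bound for $\|u_\e(t)\|_\sigma$.

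\textbf{Energy identity.} The next step is to compute $\tfrac{d}{dt}\mathcal{A}(t)$ along~\eqref{pseudo.transport}. Writing the generator as $L_\e=\OpM(s_\e)$ with $s_\e=i(\xi(V)+\e\xi(\mathcal{P}_\e)+b_{\e,1})-\e b_{\e,2}$, Lemma~\ref{manifold.symbolic}(2) gives $L_\e^*=-L_\e-2\e\OpM(b_{\e,2})$ modulo lower order, so that
\begin{equation*}
\tfrac{d}{dt}\mathcal{A}(t)=-\langle [\OpM(a),L_\e]u_\e,u_\e\rangle+2\e\langle \OpM(b_{\e,2})\OpM(a)u_\e,u_\e\rangle+\text{l.o.t.}
\end{equation*}
The commutator formula~\eqref{p.bracket}, together with the identity $\{a,\xi(V)\}=-\mathcal{X}_h(a)$, then yields
\begin{equation*}
\tfrac{d}{dt}\mathcal{A}(t)=\langle\OpM(\mathcal{X}_h(a))u_\e,u_\e\rangle+\e\langle\OpM(\mathcal{X}_{\xi(\mathcal{P}_\e)}(a))u_\e,u_\e\rangle+R_\e(u_\e(t)),
\end{equation*}
where the remainder $R_\e$ collects the contributions from $b_{\e,1}$ (an operator of order $\sigma+\rho-1<\sigma$), from $\e b_{\e,2}$ (order $\sigma-1$ in the commutator, order $\sigma$ with coefficient $\e$ in the self-adjoint correction), and the subprincipal part of the commutator with $\OpM(i\xi(V))$ (order $\sigma-1$).

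\textbf{Positive commutator and absorption of errors.} The key algebraic input is the pointwise inequality
\begin{equation*}
\mathcal{X}_h(a)+\tfrac{\sigma}{8}\,a\geq c_1\sigma^2\|\xi\|_x^\sigma\qquad\text{on }T^*M\setminus\{0\},
\end{equation*}
which I would read off Proposition~\ref{prop.escape} by splitting the cotangent space into $E$ (where $a\geq c_0\sigma\|\xi\|_x^\sigma$ and $\mathcal{X}_h(a)\geq 0$, whence the sum is at least $c_0\sigma^2/8\|\xi\|_x^\sigma$) and its complement (where $\mathcal{X}_h(a)\geq c_0\sigma\|\xi\|_x^\sigma$ and $|a|\leq 4c_0\|\xi\|_x^\sigma$, whence the sum is at least $c_0\sigma/2\|\xi\|_x^\sigma$). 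Applying G\aa{}rding's inequality (Lemma~\ref{Garding}) to the nonnegative symbol $\mathcal{X}_h(a)+\tfrac{\sigma}{8}a-\tfrac{c_1\sigma^2}{2}\langle\xi\rangle^\sigma\in\bS^\sigma(T^*M)$ and using Lemma~\ref{manifold.symbolic}(1) to control the various terms in $R_\e$ leads to
\begin{equation*}
\tfrac{d}{dt}\mathcal{A}(t)\geq\tfrac{\sigma}{8}\,\mathcal{A}(t)+\tfrac{c_1\sigma^2}{2}\|u_\e(t)\|_{\sigma/2}^2-C\e\|u_\e(t)\|_{\sigma/2}^2-C'\|u_\e(t)\|_{L^2}^2.
\end{equation*}
Under the standing assumption $\e\leq\delta_0\sigma^2$ with $\delta_0$ small, the $\sigma^2\|u_\e\|_{\sigma/2}^2$ term absorbs the $\e$-perturbation, and the remaining $L^2$-contribution is controlled by $\|u_\e(t)\|_{L^2}\leq e^{C\e t}\|u_\e(0)\|_{L^2}$. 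A Gronwall argument then gives $\mathcal{A}(t)\geq c_3 e^{\sigma t/8}\mathcal{A}(0)$ as soon as $\mathcal{A}(0)$ dominates $\sigma^{-1}\|u_\e(0)\|_{L^2}^2$.

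\textbf{Choice of initial data and main obstacle.} To trigger this exponential growth I would take $v_\e$ to be a normalized wave packet (or coherent state) microlocalized at a large frequency $N\gg 1$ inside the open cone $C$ provided by~\eqref{negative.escape}; such a cone is nonempty thanks to the dimensional assumption on $V$. Then $\mathcal{A}(v_\e)\geq c\,N^\sigma\|v_\e\|_{L^2}^2\simeq\|v_\e\|_\sigma\|v_\e\|_{L^2}$, which largely dominates $\sigma^{-1}\|v_\e\|_{L^2}^2$; continuity of the quadratic form $\mathcal{A}$ on $\mathcal{H}^\sigma(M)$ ensures that the same property is preserved for every $u_\e(0)$ in a sufficiently small $\mathcal{H}^\sigma$-ball of radius $r$ around $v_\e$. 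Combining the exponential lower bound on $\mathcal{A}(t)$ with the upper bound $\mathcal{A}(t)\leq C e^{C\e t}\|u_\e(t)\|_\sigma\|u_\e(0)\|_{L^2}$, and tuning constants, finally yields $\|u_\e(t)\|_\sigma\geq\delta_0 e^{\delta_0\sigma t}\|u_\e(0)\|_\sigma$. The main technical difficulty I anticipate is the careful bookkeeping of constants throughout: one must verify that every lower-order pseudodifferential remainder, every $\e$-perturbation term, and the slow $L^2$-growth of $u_\e$ are all dominated by the $\sigma^2\|u_\e\|_{\sigma/2}^2$ gain from the positive commutator, which is precisely what forces the scaling $\e\leq\delta_0\sigma^2$ appearing in the statement.
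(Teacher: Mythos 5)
Your proposal follows essentially the same route as the paper: a positive commutator argument built on the escape function of Proposition~\ref{prop.escape}, the energy functional $\mathcal{A}(t)=-\langle\OpM(a)u_\e(t),u_\e(t)\rangle$, G\aa{}rding's inequality, a Gronwall bound, wave packets microlocalized in the cone $C$ for the initial data, and continuity of the quadratic form to get the $\mathcal{H}^\sigma$-ball. One small genuine difference: you package the key symbolic estimate as the single pointwise inequality $\mathcal{X}_h(a)+\tfrac{\sigma}{8}a\geq c_1\sigma^2\|\xi\|_x^\sigma$ (checked correctly on $E$ and its complement using $|m|\leq 4$ and $f_\sigma\leq c_0\|\xi\|_x^\sigma$), whereas the paper carries the analogous inequality through a three-way case analysis on the ``bad'' set $\mathbb{B}_\mathcal{U}$ and its complement, and absorbs the $b_{\e,1}$-contribution by introducing a $\sigma$-dependent cutoff radius $R_0$ rather than by your interpolation/order-counting argument; both absorb the errors in the same regime $\e\lesssim\sigma^2$. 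You should however be careful on two technical points that the paper handles explicitly: (i) the escape function $a$ is only defined on $T^*M\setminus\{0\}$ and must be cut off near $\xi=0$ (the paper's $\tilde a=a(1-\chi)$) before it can be fed into $\OpM$ and G\aa{}rding; and (ii) your claimed nonnegativity of $\mathcal{X}_h(a)+\tfrac{\sigma}{8}a-\tfrac{c_1\sigma^2}{2}\langle\xi\rangle^\sigma$ fails for small $\|\xi\|_x$ where $\|\xi\|_x^\sigma<\langle\xi\rangle^\sigma$, so the low-frequency cutoff is needed there as well. Neither is a serious gap, but the clean inequality as you stated it does not literally hold everywhere.
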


Observe that this Theorem implies Theorem \ref{main.theorem}. Yet the equation considered in~\eqref{pseudo.transport} is slightly more general and it encompasses the case where the right-hand side of the equation is not necessarily skew adjoint. We emphasize that the selfadjoint perturbation in \eqref{pseudo.transport} is small both in terms of $\e$ and of the order. As anticipated, the key ingredient of our proof is the existence of an escape function for $h(x, \xi)=\xi (V(x))$, proven in Proposition \ref{prop.escape}. Thus, we start associating a symbol to the escape function.

Let $f$ be any positively homogeneous function of order $\sigma>0$, in the sense of Definition \ref{def.pos.homo}. Let $\chi \in C_c^\infty(T^*M; [0,1])$ be a bump function on $T^*M$, such that 
    \begin{equation}\label{shape.chi}
        \mbox{supp}(\chi) \subseteq \{ (x, \xi) : x \in M, \norm{\xi}_x \leq 2 \}, \quad \chi(x, \xi) \equiv 1, \quad \forall \norm{\xi}_x \leq 1. 
    \end{equation}
    Then the function $\tilde{f}(x, \xi):=f(x,\xi)(1-\chi(x,\xi))$ is a symbol of order $\sigma$, i.e., $\tilde{f} \in \bS^\sigma(T^*M)$. Indeed, it is immediate to see from its definition that $\tilde{f}$ is a smooth function on the whole $T^*M$. Moreover using the characterization in \eqref{set.symbols.M} of $\bS^\sigma(T^*M)$, one obtains the decay property of symbols from homogeneity of $f$. 
    
    In particular, for every choice of $0<\sigma<\sigma_0$ (where the threshold $\sigma_0$ is given by Proposition \ref{prop.escape}), the escape function $a$ for $h(x, \xi)=\xi (V(x))$ is a positively homogeneous function of order $\sigma$, where $\sigma$ depends on the vector field $V$ (recall the Definition \ref{def.escape} of escape function). Thus we denote by 
    \begin{equation}\label{smooth.escape}
        \tilde{a}(x, \xi):= a(x, \xi) ( 1- \chi(x, \xi)) \in \bS^\sigma(T^*M), 
    \end{equation}
    a smooth symbol associated with the escape function. We remark that, choosing $\chi$ as in \eqref{shape.chi}, we have  $\tilde{a} \equiv a$ in $\{ (x, \xi) \in T^*M : \norm{\xi}_x \geq 2\}$, so that, in this set, conditions \eqref{bound.escape} and \eqref{bound.escape.2} hold for $\tilde{a}$ as well.

\subsection{Energy type estimate}\label{ss.energy.estimate}

Let $u_\e(t, x)$ be a solution of equation \eqref{pseudo.transport} and define 
\begin{equation}\label{At}
    \cA(t):= \la \OpM(- \tilde{a})u_\e(t, x), u_\e(t, x) \ra_{L^2(M)}, \qquad \forall t \in \R,
\end{equation}
where $\tilde{a}$ is given in \eqref{smooth.escape}. The main result of this paragraph is the following Lemma.
\begin{lemma}\label{lemma.growth.A}
    There exist $\e_0>0$, $0<\sigma_0<1$ and $\tilde\alpha>0$ such that, for any $0<\sigma\leq\sigma_0$, one can find $\beta_\sigma>0$ so that, for every $0<\e\leq\sigma^2\e_0$, the function $\cA(t)$ in \eqref{At} satisfies
\begin{equation}\label{growth.A}
    \frac{\di}{\di t} \cA(t) \geq \tilde\alpha \sigma \cA(t)- \beta_\sigma\|u_\e(t)\|_{L^2}^2,\quad\forall \ t \in \R, 
\end{equation}
where $u_\e(t, x)$ is the solution of \eqref{transport.eq} with initial condition $u_\e(0, x)$.
\end{lemma}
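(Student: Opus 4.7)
The plan is to set up a positive commutator argument with the self-adjoint operator $B := \OpM(-\tilde{a})$, using the escape function properties provided by Proposition~\ref{prop.escape} together with the G\aa{}rding inequality. Since $-\tilde{a}$ is real-valued, item 2 of Lemma~\ref{manifold.symbolic} ensures that $B$ is self-adjoint, so differentiating $\cA(t) = \langle B u_\e, u_\e \rangle$ along~\eqref{pseudo.transport} produces
\begin{equation*}
\frac{d}{dt}\cA(t) = \langle (B F_\e + F_\e^* B) u_\e, u_\e \rangle,
\end{equation*}
where $F_\e$ denotes the right hand side operator in~\eqref{pseudo.transport}. Writing $b_\e = b_{\e,1} + i\e b_{\e,2}$ and isolating the self-adjoint part $K_\e := -\e\OpM(b_{\e,2})$ (of order $0$ with $L^2$-operator norm $O(\e)$), the identity becomes
\begin{equation*}
\frac{d}{dt}\cA(t) = \langle [B,\OpM(i\xi(V+\e\cP_\e))]u_\e, u_\e\rangle + \langle [B,\OpM(ib_{\e,1})]u_\e, u_\e\rangle + \langle (BK_\e+K_\e B)u_\e, u_\e\rangle.
\end{equation*}

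By the commutator formula in Lemma~\ref{manifold.symbolic}, the principal symbol of the first commutator is $\cX_h(\tilde{a}) + \e\{\xi\cP_\e, \tilde{a}\}$, of order $\sigma$. The second commutator has order $\sigma+\rho-1 < \sigma$, so once $\sigma \leq 1-\rho$ it contributes only $O(\|u_\e\|_{L^2}^2)$ by continuity. The anticommutator $BK_\e + K_\e B$ has principal symbol $-2\e\tilde{a}b_{\e,2}$ of order $\sigma$ with seminorms $O(\e)$. Thus the problem reduces to producing a lower bound for the main commutator that controls $\tilde\alpha\sigma\cA(t) = -\tilde\alpha\sigma\langle\OpM(\tilde{a})u_\e, u_\e\rangle$ modulo an $L^2$ remainder.

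The crucial point is to analyze the symbol $q := \cX_h(\tilde{a}) + \tilde\alpha\sigma\tilde{a}$ for a small $\tilde\alpha > 0$ fixed independently of $\sigma$. Combining Proposition~\ref{prop.escape} with the uniform upper bound $|\tilde{a}| \leq 4c_0\|\xi\|_x^\sigma$ issued from~\eqref{e.uniform.bound.sigma} (through $|m|\leq 4$ and the bound on $f_\sigma$), one sees that, on $\{\|\xi\|_x \geq 2\}$, one has $q \geq c_0\sigma(1-4\tilde\alpha)\|\xi\|_x^\sigma$ off $E$ and $q \geq \tilde\alpha c_0\sigma^2\|\xi\|_x^\sigma$ on $E$. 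Picking $\tilde\alpha = 1/8$ therefore yields a uniform lower bound $q \geq c_1\sigma^2\|\xi\|_x^\sigma$ on this region, for some $c_1 > 0$ independent of $\sigma$. Writing $\phi := (1+\|\xi\|_x^2)^{\sigma/2}$, the symbol $q - \tfrac{c_1\sigma^2}{2}\phi$ is bounded below by some $-C_\sigma$ on all of $T^*M$ (the deficit being supported in a compact set of frequencies), so Lemma~\ref{Garding} together with the identification $\OpM(\phi) = (1-\Delta_g)^{\sigma/2} \bmod \Psi^{\sigma-2}(M)$ gives
\begin{equation*}
\langle \OpM(q)u_\e, u_\e\rangle \geq c_2\sigma^2 \|u_\e\|_{\sigma/2}^2 - \tilde{C}_\sigma \|u_\e\|_{L^2}^2,
\end{equation*}
with $c_2 > 0$ independent of $\sigma$.

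The last step is to absorb the perturbative errors. The symbols $\{\xi\cP_\e, \tilde{a}\}$ and $\tilde{a}b_{\e,2}$ lie in $\bS^\sigma(T^*M)$ with seminorms uniformly bounded in $\e$ (using the uniform bounds on $\cP_\e$, $b_{\e,2}$ and~\eqref{e.uniform.bound.sigma}), so by item 1 of Lemma~\ref{manifold.symbolic} these two contributions are each bounded in absolute value by $C\e\|u_\e\|_{\sigma/2}^2$. Choosing $\e_0 > 0$ small enough that $C\e_0 \leq c_2/4$, the hypothesis $\e \leq \e_0\sigma^2$ forces $C\e \leq c_2\sigma^2/4$, and these errors are absorbed into half of the main positive term. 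The lower-order commutator $[B,\OpM(ib_{\e,1})]$ together with the compactly supported corrections inherent to the smoothing $\tilde{a} = a(1-\chi)$ yield only $O(\|u_\e\|_{L^2}^2)$ contributions, eventually absorbed into $\beta_\sigma\|u_\e\|_{L^2}^2$ with $\beta_\sigma$ encoding their possible $\sigma$-dependence. The main obstacle is precisely the G\aa{}rding step: one has to squeeze a positivity of order $\sigma^2\|u_\e\|_{\sigma/2}^2$ out of a symbol whose positivity is only of order $\sigma^2\|\xi\|_x^\sigma$, and it is this loss of one power of $\sigma$ (coming from the two regimes $E$ vs $E^c$ in Proposition~\ref{prop.escape}) that dictates the smallness condition $\e \leq \e_0\sigma^2$ appearing in the statement.
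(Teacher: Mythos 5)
Your argument is correct and shares with the paper's proof the same overall positive commutator strategy, the same decomposition of $\frac{\di}{\di t}\cA(t)$ into a commutator (against the skew-adjoint part) and an anticommutator (against the self-adjoint part), and the same reliance on Proposition~\ref{prop.escape} combined with G\aa{}rding. The place where your organization genuinely departs from the paper's is in how the perturbative terms are absorbed. The paper folds \emph{all} symbol-level perturbations into a single pointwise inequality $\{\xi V,\tilde a\}+\e\cP_{\tilde a}+\cP_{b,\e}-C_b\e\langle\xi\rangle^\sigma>-C_+\sigma\tilde a$ on $\{\|\xi\|_x\geq R_0\}$, applied after a preliminary G\aa{}rding step on the anticommutator; the sub-principal term $\cP_{b,\e}=\{b_{\e,1},\tilde a\}\in\bS^{\sigma+\rho-1}$ is then handled by letting the truncation radius $R_0$ depend on $\sigma$ (so that $R_0^{\rho-1}$ is of order $\sigma^2$), which makes the final constant $\beta_\sigma$ inherit a $\sigma$-dependence through $R_0$. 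You instead isolate the clean symbol $q=\cX_h(\tilde a)+\tilde\alpha\sigma\tilde a$, prove the uniform lower bound $q\geq c_1\sigma^2\|\xi\|_x^\sigma$ on $\{\|\xi\|_x\geq 2\}$ (your two-regime calculation on $E$ and its complement is correct, and $\tilde\alpha=1/8$ works given $|m|\leq 4$ and~\eqref{e.uniform.bound.sigma}), pass through the operator coercivity $\langle\OpM(q)u,u\rangle\geq c_2\sigma^2\|u\|_{\sigma/2}^2-C_\sigma\|u\|_{L^2}^2$ via G\aa{}rding, and then absorb the two $O(\e)$ perturbations of order $\sigma$ as $O(\e\|u\|_{\sigma/2}^2)$ errors. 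The $\{b_{\e,1},\tilde a\}$ contribution is simply disposed of by $L^2$-boundedness once $\sigma\leq 1-\rho$, a constraint you may fold into the choice of $\sigma_0$. Your route therefore trades the $\chi_3$/$R_0$ machinery for an extra invocation of Calder\'on--Vaillancourt at the operator level; both versions need $\e\leq \e_0\sigma^2$ for the same reason, namely that the escape function's positivity degrades to order $\sigma^2$ on the exceptional cone $E$.

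One small slip: the principal symbol of $BK_\e+K_\e B$, with $B=\OpM(-\tilde a)$ and $K_\e=-\e\OpM(b_{\e,2})$, is $+2\e\tilde a b_{\e,2}$, not $-2\e\tilde a b_{\e,2}$. This is harmless, since you only bound that term in absolute value.
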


For simplicity of notations, from now on we drop the index $\e$ and just write $u_\e=u$. Before proving the Lemma, we remark that, from the properties of the operator $\mathcal{B}(t):=\OpM(i\xi(V(x)+\e\mathcal{P}(t,x))+b_\e(t,x,\xi))$ and the definitions of the functions $b_{\e_1}$, $b_{\e_2}$ in \eqref{pseudo.transport}, we have 
\begin{equation}\label{e.normL2}
\frac{\di}{\di t}\|u(t)\|_{L^2}^2=2\e \langle\OpM(b_{\e,2}) u(t),u(t)\rangle\leq 2C_b\e \|u(t)\|_{L^2}^2,
\end{equation}
where we used the Calder\'on-Vaillancourt property~\eqref{e.cv} in the last inequality and where the involved constant $C_b>0$ is uniform for $0\leq\e\leq 1$. In particular,
\begin{equation}\label{e.bound.normL2}
\|u(t)\|_{L^2}\leq e^{C_b\e t} \|u(0)\|_{L^2}, \quad \forall t\geq 0, 
\end{equation}
and equation~\eqref{growth.A} also reads, for any $c>0$,
\begin{equation}\label{growth.A.bis0}
    \frac{\di}{\di t} \left(\cA(t)- c\|u(t)\|_{L^2}^2\right) \geq \tilde\alpha\sigma \left(\cA(t)- \frac{\beta_\sigma+2cC_b\e}{\tilde\alpha\sigma}\|u(t)\|_{L^2}^2\right),\quad\forall \ t \in \R. 
\end{equation}
Hence, if we set $
c=\beta_{\sigma,\e}:=\frac{\beta_\sigma}{\tilde\alpha\sigma-2C_b\e}$ for $0\leq\e \leq \tilde\alpha \sigma/4C_b$ (recall that $0\leq \e \leq\e_0\sigma^2$),
we find that
\begin{equation}\label{growth.A.bis}
    \frac{\di}{\di t} \left(\cA(t)- \beta_{\sigma,\e}\|u(t)\|_{L^2}^2\right) \geq \tilde\alpha\sigma \left(\cA(t)- \beta_{\sigma,\e}\|u(t)\|_{L^2}^2\right),\quad\forall \ t \in \R. 
\end{equation}
In paragraph~\ref{ss.initial.data}, we will explain how one can derive Theorem~\ref{main.theorem.pseudo} from this energy estimate in \eqref{growth.A.bis}.

\subsection{Proof of Lemma \ref{lemma.growth.A}}
First of all, we remark that, using \eqref{pseudo.transport} and the properties of the operator $\mathcal{B}(t)$, we can write: 
\begin{equation}\label{der.A.0}
\begin{split}
    \frac{\di}{\di t} \cA(t) 
    & = \la [\OpM(-\tilde{a}), \OpM(i\xi (V(x)+ \e \cP_\e(t, x))+ib_{\e,1}(t,x, \xi)) ] u, u \ra
    \\
    & +2\e \text{Re} \la \OpM(\tilde{a})\OpM(b_{\e,2})u, u \ra .
    \end{split}
\end{equation}
Using the G\aa{}rding inequality from Lemma~\ref{Garding}, one can find a constant $C_{b}$ (uniform for $t\in\R$ and $0\leq\e\leq 1$) such that this equality implies
\begin{equation}\label{der.A}
    \frac{\di}{\di t} \cA(t) 
     \geq  \la [\OpM(-\tilde{a}), \OpM(i\xi (V(x)+ \e \cP_\e(t, x))+ib_{\e,1}(t,x, \xi)) ] u, u \ra -C_b \e \la \OpM(\langle\xi\rangle^\sigma)u, u \ra ,
\end{equation}
where $\langle\xi\rangle^\sigma:=(1+\|\xi\|_x^2)^{\frac{\sigma}{2}}.$

Hence, we are left with studying the operator $[\OpM(-\tilde{a}), \OpM(i\xi (V(x)+ \e \cP_\e(t, x))+ib_{\e,1})]$. We first use Lemma \ref{manifold.symbolic} to recover its symbol: 
\begin{multline}\label{symbol.c}
    [\OpM(-\tilde{a}), \OpM(i\xi (V(x)+ \e \cP_\e(t, x))+ib_{\e_1})] \\
    \stackrel{\eqref{p.bracket}}{=} \OpM(-i \{ -\tilde{a}, i \xi (V(x) + \e \cP_\e(t))+ib_{\e,1}\})\ \mod\ \Psi^{\sigma-1}(M) \\
    = \OpM(\{ \xi (V(x)+ \e \cP_\e(t))+b_{\e,1}, \tilde{a} \} )\ \mod\ \Psi^{\sigma-1}(M)\\
     = \OpM(\{ \xi (V(x)), \tilde{a} \}) + \OpM(\e\cP_{\tilde{a}}(t)+\cP_{b,\e}(t))\ \mod\ \Psi^{\sigma-1}(M)
\end{multline}
where the remainder in $\Psi^{\sigma-1}(M)$ results from the symbolic calculus operations (see \eqref{p.bracket}) with all the seminorms of the symbols uniformly bounded in terms of $t\in\R$, $0<\sigma\leq 1$ and $0<\e <1$. Here, we have denoted by $\cP_{\tilde{a}}(t):= \{\xi (\cP_\e(t,x)),\tilde{a} \}$ and $\cP_{b,\e}(t):= \{b_{\e,1},\tilde{a} \}$, just to shorten the notation. Remark that $\cP_{\tilde{a}}(t) \in C^0_b(\R; \bS^\sigma(M))$, that it depends implicitly on $\e$ and that it is a homogeneous function of degree $\sigma$ on $\{(x, \xi) : \norm{\xi}_x \geq 2 \}$ for all $t \in \R$. Similarly, $\cP_{b,\e}(t)$ belongs to $\mathcal{C}^0(\R; \bS^{\sigma+\rho-1}(M))$. Consider now a cutoff function $\chi_3 \in C_c^\infty(T^*M; [0, 1])$, such that
\begin{equation}\label{chi.3}
    \mbox{supp}(\chi_3) \subseteq \{ (x, \xi) : \norm{\xi}_x \leq R_0+1 \}, \quad \mbox{ and } \quad \chi_3(x, \xi) \equiv 1 \mbox{ in } \{ (x, \xi) : \norm{\xi}_x \leq R_0\},
\end{equation}  
where $R_0\geq 2$ will be determined later on. We can rewrite 
\begin{multline}\label{split.cutoff}
     \{\xi (V(x)), \tilde{a} \}+ \e \cP_{\tilde{a}}(t)+\mathcal{P}_{b,\e}(t)-C_b\e\langle\xi\rangle^\sigma= (\{\xi (V(x)), \tilde{a} \}+ \e \cP_{\tilde{a}}(t)+\mathcal{P}_{b,\e}(t)-C_b\e\langle\xi\rangle^\sigma)(1-\chi_3)   \\
     + (\{\xi (V(x)), \tilde{a} \}+ \e \cP_{\tilde{a}}(t)+\mathcal{P}_{b,\e}(t)-C_b\e\langle\xi\rangle^\sigma )\chi_3.
\end{multline}
Since the cutoff function is compactly supported, we have from Lemma \ref{symbols.compact} that
\begin{equation}\label{compact}
    \OpM((\{\xi (V(x)), \tilde{a} \}+ \e \cP_{\tilde{a}}(t)+\mathcal{P}_{b,\e}(t)-C_b\e\langle\xi\rangle^\sigma)\chi_3) \in C^0_b(\R; \cS^{-\nu}), \quad \forall \nu>0,
\end{equation}
with the involved constants in the semi-norms being uniform for $t\in\R$, $0\leq \e\leq 1$, $0\leq\sigma\leq 1$ and $R_0$ varying in a compact interval of $(0,\infty)$. Thus the right-hand side of \eqref{der.A} reads:  
\begin{multline}
\label{comm.with.R}
    [\OpM(-\tilde{a}), \OpM(i\xi (V(x)+ \e \cP_\e(t, x))+ib_{\e_1})]-C_b\e\OpM(\langle\xi\rangle^\sigma)\\
    \stackrel{\substack{\eqref{split.cutoff}\\ \eqref{compact}}}{=} \OpM((\{\xi (V(x)), \tilde{a} \}+ \e \cP_{\tilde{a}}(t)+\mathcal{P}_{b,\e}(t)-C_b\e\langle\xi\rangle^\sigma)(1-\chi_3) ) \ \mod\ \Psi^{\sigma-1}(M),
\end{multline}
where the remainder takes into account both the remainder in \eqref{symbol.c} and the compact operator in \eqref{compact}. One more time, all the seminorms of the symbols in this remainder are uniformly bounded in time and in terms of $0<\e<1$ and $0<\sigma\leq 1$. Yet it is worth noting that the constants depend on $R_0\geq 2$.

We now claim that, from \eqref{bound.escape} and \eqref{bound.escape.2}, there exist $\e_0,\sigma_0>0$ small enough and a positive constant $C_+$ such that, for every $0< \sigma\leq\sigma_0$, one can find $R_0\geq 2$ so that, for every $0\leq \e \leq \e_0\sigma^2$,
\begin{equation}\label{claim.final}
    (\{ \xi V(x), \tilde{a} \} + \e \cP_{\tilde{a}}(t)+\mathcal{P}_{b,\e}(t)-C_b\e\langle\xi\rangle^\sigma)(1-\chi_3) > -C_+\sigma \tilde{a}(1-\chi_3), \quad \forall (x, \xi) \in T^*M, \forall t \in \R. 
\end{equation}
Let us postpone the proof of \eqref{claim.final} and conclude the proof of \eqref{growth.A}. Applying G\aa{}rding inequality (see Lemma \ref{Garding}) with $0<\sigma<1$, we indeed obtain from \eqref{claim.final} that, for every $0< \e < \e_0\sigma^2$: 
\begin{equation}\label{use.garding}
    \OpM((\{\xi V(x),  \tilde{a}\} + \e \cP_{\tilde{a}}(t)+\mathcal{P}_{b,\e}(t)-C_b\e\langle\xi\rangle^\sigma)(1-\chi_3)) \geq 
    C_+\sigma \OpM(-\tilde{a}) -C_{0}\text{Id},
\end{equation}
where $C_{0}>0$ is the constant resulting from G\aa{}rding inequality and the remainder in $\Psi^{\sigma-1}(M)$. It is again uniformly bounded for $t\in\R$ but now depends on $0<\sigma\leq 1$ (through its dependence in $R_0$).
Plugging \eqref{comm.with.R} and \eqref{use.garding} in \eqref{der.A} we obtain 
\begin{equation}\label{done!}
    \frac{\di}{\di t} \cA(t)
    \geq C_+\sigma \la \OpM(-\tilde{a})u, u \ra - (C_{K}+C_0)\|u(t)\|_{L^2}^2,
\end{equation}
with $C_{K}>0$ taking into account the compact remainder from~\eqref{compact}. Observe that again this remainder depends implicitly on $R_0\geq 2$ and thus on $0\leq \sigma\leq\sigma_0$. See Remark~\ref{remark.R0} for discussion on this dependence.
This concludes the proof, since \eqref{done!} is \eqref{growth.A} with $\tilde{\alpha}:=C_+$ and $\tilde{\beta}_\sigma:=C_{K}+ C_0>0$.

We are left to prove \eqref{claim.final}. To this aim, we first recall that, from the properties of $\chi_3$ in \eqref{chi.3}, we only have to verify \eqref{claim.final} in $\{(x, \xi) \in T^*M : \norm{\xi}_x \geq R_0 \}$, since $(1-\chi_3)\equiv 0$ otherwise. We thus claim that 
\begin{equation}\label{too.much}
    \{ \xi V(x), \tilde{a} \} + \e \cP_{\tilde{a}}(t)+\mathcal{P}_{b,\e}(t)-C_b\e\langle\xi\rangle^\sigma> - C_+\sigma \tilde{a}, \quad \mbox{ in } \{ (x, \xi) \in T^*M : \norm{\xi}_x \geq R_0\},
\end{equation}
for some $C_+>0$ (depending only $V$) and $0<\e<\e_0\sigma^2$ with $\e_0$ to be determined. Clearly \eqref{too.much} implies \eqref{claim.final} since $(1-\chi_3)\geq0$ everywhere. We remark that in this remaining region $\tilde{a} \equiv a$, where $a$ is the homogeneous escape function given by Proposition \ref{prop.escape}. To prove \eqref{too.much} we split the domain as 
\begin{equation}\label{two.sets}
    \{(x, \xi) \in T^*M : \norm{\xi}_x \geq R_0 \} = \underbrace{\{(x, \xi) \in E : \norm{\xi}_x \geq R_0 \}}_{=:\cE} \cup \underbrace{\{(x, \xi) \in T^*M\setminus E : \norm{\xi}_x \geq R_0 \}}_{=: \cI},
\end{equation}
where $E$ is the set appearing in the Definition \ref{def.escape} of the escape function. First, we look at the set $\cI$. Using \eqref{bound.escape} (with $\delta=c_0 \sigma$ given by Proposition \ref{prop.escape}), we have that 
\begin{equation}\label{In.I}
\{ \xi V(x), \tilde{a} \} + \e \cP_{\tilde{a}}(t) +\cP_{b,\e}(t)-C_b\e\langle\xi\rangle^\sigma\geq (c_0\sigma - \e (M_2+2C_b)-\tilde{C}_bR_0^{\rho-1})\norm{\xi}_x^\sigma,
\end{equation}
where $\tilde{C}_b>0$ is a constant (uniform in $0\leq \e\leq 1$ and $0\leq\sigma\leq 1$) that depends on the seminorms of $b_\e$ and where $M_2 := \max_{t \in \R, (x, \xi) \in \cI} \left|\cP_{\tilde{a}}\left(t, x, \frac{\xi}{\norm{\xi}}\right) \right|>0$ is again uniform for $0<\sigma\leq 1$. Moreover, using again homogeneity of $a$, one has $a(x, \xi) \geq - C_a \norm{\xi}^\sigma_x$ for some $C_a>0$ (that is uniform for $0\leq\sigma\leq 1$) and for all $(x, \xi) \in \cI$. Thus 
choosing $\e_{\cI}:=\frac{c_0}{4(M_2+2 C_b)}>0$ and $R_0^{\rho-1}\leq \frac{c_0\sigma}{4\tilde{C}_b}$, we obtain from \eqref{In.I} that, for all $0<\e<\e_{\cI}\sigma$ we have: 
\[
\{\xi (V(x)), \tilde{a} \} + \e \cP_{\tilde{a}}(t)+\cP_{b,\e}(t) -C_b\e\langle\xi\rangle^\sigma\geq \frac{c_0\sigma}{4} \norm{\xi}_x^\sigma \geq - \frac{c_0}{4C_a}\sigma a.
\]
Finally, from condition \eqref{bound.escape.2} (with $\delta= c_0 \sigma$ from Proposition \ref{prop.escape}), we see that, in $\cE$ (see \eqref{two.sets}):
\begin{equation}\label{In.E}
\tilde{a} \equiv a \geq c_0\sigma \norm{\xi}^\sigma_x  \quad \mbox{whilst} \quad \{ \xi V(x) , a \}  + \e \cP_{\tilde{a}}(t) +\mathcal{P}_{b,\e}(t)-C_b\e\langle\xi\rangle^\sigma\stackrel{\eqref{bound.escape.2}}{\geq}
- (\e  (M_1+C_b)+\tilde{C}_bR_0^{\rho-1}) \norm{\xi}^\sigma_x, 
\end{equation}
where $M_1:=\max_{t \in \R, (x, \xi) \in \cE}\left|\cP_{\tilde{a}}\left(t, x, \frac{\xi}{\norm{\xi}_x}\right) \right|\geq0$ (since $\cP_{\tilde{a}}(t)$ is positively homogeneous of degree $\sigma$ in $\cE$) is uniformly bounded in terms of $0<\sigma\leq 1$. Thus choosing $\e_{\cE}:= \frac{c_0^2}{8(M_1+C_b)C_a}$ and  $R_0^{\rho-1}\leq \frac{c_0^2\sigma^2}{8\tilde{C}_bC_a}$, we obtain that, for all $0<\e<\e_{\cE}\sigma^2$,
$$
\{ \xi V(x) , a \}  + \e \cP_{\tilde{a}}(t) +\mathcal{P}_{b,\e}(t)- C_b \e \la \xi \ra^\sigma \geq 
- \frac{c_0^2\sigma^2}{4C_a} \norm{\xi}^\sigma_x\geq -\frac{c_0\sigma}{4C_a}a. 
$$
This proves \eqref{too.much} with $C_+:=\frac{c_0}{4C_a}>0$ and $\e_0:= \min\{\e_{\cI}, \e_{\cE}\}>0$, concluding the proof of the energy estimate~\eqref{growth.A}.

\begin{remark}\label{remark.R0} Observe that, the smaller $\sigma$ is, the larger $R_0\geq 2$ has to be. If $b_{\e,1}$ was of the form $\e \tilde{b}_{\e,1}$ with $\tilde{b}_{\e,1}\in\mathcal{C}^0_b(\R; \bS^\rho(M))$ having all its seminorms uniformly bounded in terms of $0\leq \e\leq 1$, one could in fact pick $R_0=2$ up to shrinking the value of $\e_0$. In particular, $\tilde{\beta}_\sigma$ could be chosen uniformly in terms of $0\leq\sigma\leq \sigma_0$. Finally, note that the choice of the parameter $\e_0$ depends not only on $V$ and $\mathcal{P}_\e$ but also on $b_{\e,2}$. 
\end{remark}

\subsection{Initial data and end of the proof of Theorem~\ref{main.theorem.pseudo}}\label{ss.initial.data}

We are now ready to prove Theorem~\ref{main.theorem.pseudo}. According to Equation~\eqref{growth.A.bis}, one has that, for any $0<\sigma\leq\sigma_0$, for any $0<\e\leq \sigma^2 \e_0$ and for any initial datum $u_0\in L^2$,
\begin{equation}\label{integrate}
    \cA(t) \geq e^{\tilde\alpha\sigma  t}\left(\cA(0)-\beta_{\sigma,\e}\|u_0\|_{L^2}^2 \right) + \beta_{\sigma,\e}\|u(t)\|_{L^2}^2, \qquad \forall t \geq 0. 
\end{equation}
Cauchy-Schwarz inequality together with~\eqref{e.bound.normL2} now gives: 
\begin{equation}\label{finish}
\cA(t) \stackrel{\eqref{At}}{=} \la \OpM(-\tilde{a}) u(t, x),  u(t, x) \ra_{L^2(M)} \leq \norm{\OpM(-\tilde{a}) u}_{L^2(M)}e^{C_b\e t}\|u_0\|_{L^2}.
\end{equation}
Since $\tilde{a} \in \bS^\sigma(T^*M)$, Lemma \ref{manifold.symbolic} (inequality \eqref{C.V.manifold}) gives:
\begin{equation}\label{eq.2}
\norm{\OpM(\tilde{a})u}_{L^2(M)}\leq C_{\tilde{a}} \norm{u}_{\cH^\sigma}, \quad \forall u \in C^\infty(M). 
\end{equation}
Thus, plugging \eqref{eq.2} and \eqref{finish} in~\eqref{integrate}, we have, for every $0<\sigma\leq\sigma_0<1$, 
\begin{equation}\label{e.exp.lower.bound}
C_{\tilde{a}}\norm{u}_{\cH^\sigma}\|u_0\|_{L^2} \geq e^{(\tilde\alpha\sigma-C_b\e) t}\left(\cA(0)-\beta_{\sigma,\e}\|u_0\|_{L^2}^2 \right)\geq e^{(\tilde\alpha-C_b\e_0\sigma ) \sigma t}\left(\cA(0)-\beta_{\sigma,\e}\|u_0\|_{L^2}^2 \right). 
\end{equation}
Hence, we will be done with the proof of Theorem~\ref{main.theorem.pseudo} if we are able to find initial datum $u_0$ such that 
\begin{equation}\label{e.criterium.growth}
 F(u_0):=-(\cA(0)-\beta_{\sigma, \e}\norm{u_0}^2_{L^2}) \equiv \la \OpM(\tilde{a})u_0, u_0 \ra_{L^2(M)}+\beta_{\sigma,\e}\|u_0\|_{L^2}^2<0.
\end{equation}

\begin{remark}
  Note that the proof is valid for any $0<\sigma<\sigma_0$ with $\sigma_0$ given by Proposition \ref{prop.escape}. Yet, since for every $s \geq \sigma$, we have 
$\norm{u}_s \geq \norm{u}_\sigma$, exponential blow up of the Sobolev norm of the solutions to~\eqref{pseudo.transport} holds for any $s \geq \sigma$. 
\end{remark}

Let us now construct $u_0$ verifying~\eqref{e.criterium.growth}. To that aim, we recall property~\eqref{negative.escape} of the escape function $a$, namely that it is $\leq -c_0\|\xi\|^\sigma/2$ in some non empty\footnote{This is exactly for this reason that we need to impose that $V$ has a critical point when $n=1$.} closed conic subset $C$. Hence, we will construct initial data which are microlocalized in this region where $a<0$. To that aim, we fix $\chi\in\mathcal{C}^\infty(M; [0,1])$ such that the support of $\chi$ intersects $C$ and such that the support of $\chi$ is contained in an isochore local chart $\gamma: \mathcal{U}_j\subset M\rightarrow B(0,1):=\{x\in\R^n:\|x\|<1\}$. In order to work with the Weyl quantization on $\R^n$, we will consider initial data of the form $u_0=\chi v_0\circ\gamma$ with $v_0\in\mathcal{C}^{\infty}_c(B(0,1))$. This allows us to write
\begin{equation}
\la \OpM(\tilde{a})u_0, u_0 \ra_{L^2(M)}=\la \gamma^{-1*}\chi\OpM(\tilde{a})\chi\gamma^* v_0, v_0 \ra_{L^2(\R^n)}.
\end{equation}
Thanks to the rule for the change of coordinates for pseudodifferential operators~\cite[Th.~9.9]{Zworski}, one finds that
\begin{equation}\label{bound.chi.a}
\la \OpM(\tilde{a})u_0, u_0 \ra_{L^2(M)}\leq \la \Opw((\chi^2\tilde{a})\circ\tilde{\gamma} ) v_0, v_0 \ra_{L^2(\R^n)} +C\|v_0\|_{L^2(\R^n)}^2,
\end{equation}
for some constant $C$ that depends only on the coordinate chart, on $\tilde{a}$ and on the choice of the quantization procedure $\OpM$ and with $\tilde{\gamma}$ defined in \eqref{symp.lift}. We now fix some $\xi_0\neq 0$ such that $\xi_0$ lies in the region where $(\chi^2\tilde{a})\circ\tilde{\gamma}\leq -c_\gamma\|\xi\|^{\sigma}$ and we set 
$$
\tilde{v}_h(x)=\chi_0(x)e^{\frac{i\xi_0\cdot x}{h}},\quad 0<h\leq 1,
$$ 
with $\chi_0\in\mathcal{C}^{\infty}_c(B(0,1))$ which is not identically $0$. With this definition at hand, one has
\begin{lemma}\label{l.norm.initial.data} There exists a constant $C_0>0$ (depending on $\xi_0$ and $\chi_0$) such that, for every $0<h\leq 1$ and for every $0\leq\sigma\leq 1$, one has
   \begin{equation}\label{e.sobolev.norm.estimate}
  C_0^{-1}\leq \|\tilde{v}_h\circ\gamma\|_{L^2}\leq C_0,\ \text{and}\  \|\tilde{v}_h\circ\gamma\|_{\sigma}\leq C_0h^{-\sigma}.
\end{equation}
\end{lemma}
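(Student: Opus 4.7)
The two inequalities have different flavors: the $L^2$-bound is essentially a tautology coming from the isochore choice of chart, while the Sobolev estimate follows from a Fourier-side computation combined with the equivalence of Sobolev norms on $M$ and on $\mathbb{R}^n$ for functions supported in a chart.

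First I would handle the $L^2$-part. Since $\gamma:\mathcal{U}_j\to B(0,1)$ is isochore (Definition~\ref{def.isochore}), the pullback of $\upsilon_g$ is the Lebesgue measure. As $\chi_0$ is compactly supported in $B(0,1)$, the function $\tilde v_h\circ\gamma$ extends smoothly by zero to $M$, and
\[
\|\tilde v_h\circ\gamma\|_{L^2(M)}^2=\int_{B(0,1)}|\chi_0(y)e^{i\xi_0\cdot y/h}|^2\,dy=\|\chi_0\|_{L^2(\mathbb{R}^n)}^2,
\]
which is a positive constant independent of $h$ and of $\sigma$; choosing $C_0\geq\max(\|\chi_0\|_{L^2}^{-1},\|\chi_0\|_{L^2})$ yields the two-sided bound.

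For the $\sigma$-Sobolev estimate I would first invoke the standard equivalence, for functions supported inside a fixed coordinate chart, between the intrinsic norm $\|\cdot\|_\sigma$ defined via $(1-\Delta_g)^{\sigma/2}$ and the Euclidean Sobolev norm $\|\cdot\|_{H^\sigma(\mathbb{R}^n)}$ of the coordinate representative (the equivalence constants can be taken uniform for $\sigma\in[0,1]$ by interpolation between $\sigma=0$, where equality holds because the chart is isochore, and $\sigma=1$, where the Riemannian gradient is comparable to the Euclidean one on a compact chart). It therefore suffices to estimate $\|\tilde v_h\|_{H^\sigma(\mathbb{R}^n)}$. Writing $\widehat{\tilde v_h}(\eta)=\widehat{\chi_0}(\eta-\xi_0/h)$ and changing variables,
\[
\|\tilde v_h\|_{H^\sigma(\mathbb{R}^n)}^2=\int_{\mathbb{R}^n}(1+|\eta+\xi_0/h|^2)^\sigma|\widehat{\chi_0}(\eta)|^2\,d\eta.
\]
Using the elementary bound $(1+|\eta+\xi_0/h|^2)^\sigma\leq 2^\sigma(1+|\xi_0|^2 h^{-2})^\sigma(1+|\eta|^2)^\sigma$ and the Schwartz decay of $\widehat{\chi_0}$, together with $h\leq 1$ and $\sigma\leq 1$, this integral is controlled by $C h^{-2\sigma}$ where $C$ depends only on $\chi_0$ and $\xi_0$.

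The only mild point to watch is the uniformity of all constants in $\sigma\in[0,1]$: this requires that both the chart-to-manifold Sobolev equivalence and the elementary Peetre-type inequality above be phrased with $\sigma$-independent constants. Both are standard (for the equivalence, interpolate between the isochore $L^2$ identity and the $H^1$ comparison via the $g$-gradient), and neither is a real obstacle; the argument is essentially bookkeeping. Combining the two reductions yields \eqref{e.sobolev.norm.estimate}.
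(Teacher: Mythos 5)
Your proof follows essentially the same route as the paper's: the isochore chart reduces the $L^2$-bound to $\|\chi_0\|_{L^2}$, and the Sobolev bound comes from computing $\widehat{\tilde v_h}(\eta)=\widehat{\chi_0}(\eta-\xi_0/h)$, applying a Peetre-type inequality to pull out the $h^{-\sigma}$ factor, and invoking the uniform (in $\sigma\in[0,1]$) equivalence of the intrinsic and Euclidean Sobolev norms for functions supported in the chart. Your additional remark that the uniformity of the chart equivalence can be justified by interpolation between the isochore $L^2$ identity and the $H^1$ comparison is a useful clarification of a point the paper leaves implicit, but the argument is the same.
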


\begin{proof} Regarding the $L^2$-norm, one has
$$
\|\tilde{v}_h\|_{L^2(\R^n)}=\int_{\R^n}|\chi_0(x)|^2dx,
$$
from which we deduce the expected equality as we have chosen an isochore chart. Regarding the Sobolev norm, one has
$$
\|\tilde{v}_h\|_\sigma^2=\int_{\R^n}(1+\|\xi\|^2)^\sigma\left|\int_{\R^n}\chi_0(x)e^{ix\cdot\left(\frac{\xi_0}{h}-\xi\right)}dx\right|^2d\xi,
$$
where $\|.\|_\sigma$ denotes here the Sobolev norm in $\R^n$ (endowed with the standard Euclidean metric). Hence, denoting by $\widehat{\chi}_0$ the Fourier transform of $\chi_0$, one finds
$$
\|\tilde{v}_h\|_\sigma^2=\int_{\R^n}\left(1+\left\|\xi+\frac{\xi_0}{h}\right\|^2\right)^\sigma|\widehat{\chi}_0(\xi)|^2d\xi\leq 2^\sigma\left(1+2\frac{\|\xi_0\|^2}{h^2}\right)^\sigma\|\chi_0\|_{\sigma}.
$$
The conclusion follows then from the fact that the Sobolev norm induced by the Riemannian metric $g$ on $M$ and the one induced by the Euclidean metric in the local chart are uniformly equivalent (with constants that can be made uniform in terms of $0\leq\sigma\leq 1$).
\end{proof}

\begin{remark}
Recall that $\tilde{a}$ is $\sigma$ homogeneous for $\|\xi\|_x\geq 2$ with $0<\sigma<1$. Hence, its pullback to $T^*\R^n$ is also $\sigma$-homogeneous for $\|\xi\|$ large enough, say $R_0$ (where $\|.\|$ now denotes the Euclidean norm in $\R^n$). Hence, we pick $\|\xi_0\|>2R_0$ in order to have this covector in the region where the pullback symbol $(\chi^2\tilde{a})\circ\tilde{\gamma}$ is $\sigma$-homogeneous.
\end{remark}
We now study the behavior of 
\begin{equation}\label{oscillating.integral}
\la \Opw((\chi^2\tilde{a})\circ\tilde{\gamma} ) \tilde{v}_h, \tilde{v}_h \ra_{L^2(\R^n)},\ \text{as}\ h\to 0^+.
\end{equation}
Observing that the $L^2$ norm of $\tilde{v}_h$ is uniformly bounded in terms of $0<h\leq 1$, we will be done with the proof of~\eqref{e.criterium.growth} if we can prove that~\eqref{oscillating.integral} tends to $- \infty$ as $h\to 0^+$.  Indeed, from Cauchy-Schwartz inequality and \eqref{bound.chi.a}, we will obtain \eqref{e.criterium.growth}. This is the content of the next Lemma.
\begin{lemma}\label{l.check.criterium} With the above conventions, there exist $0<c_1,h_1<1$ such that, for every $0<h<h_1$ and for every $0\leq \sigma \leq 1$,
$$
\la \Opw((\chi^2\tilde{a})\circ\tilde{\gamma} ) \tilde{v}_h, \tilde{v}_h \ra_{L^2(\R^n)}\leq -c_1h^{-\sigma}\leq  -c_1 C_0^{-2}\|\tilde{v}_h\|_{\sigma}\|\tilde{v}_h\|_{L^2}.
$$
\end{lemma}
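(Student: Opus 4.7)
\textbf{Proof plan for Lemma \ref{l.check.criterium}.} The second inequality follows immediately from \eqref{e.sobolev.norm.estimate}, so we focus on the first. The central observation is that multiplying $\chi_0$ by the plane wave $e^{i\xi_0\cdot x/h}$ simply shifts the frequency variable in the Weyl quantization: performing the change of variable $\xi \mapsto \xi + \xi_0/h$ in the oscillatory integral defining $\Opw((\chi^2\tilde{a})\circ\tilde{\gamma})$ (and using that the $e^{i(x-y)\cdot \xi_0/h}$ factor cancels the plane waves coming from $\tilde{v}_h$ and $\overline{\tilde{v}_h}$) yields the identity
\[
\la \Opw((\chi^2\tilde{a})\circ\tilde{\gamma}) \tilde{v}_h, \tilde{v}_h\ra_{L^2(\R^n)} = \la \Opw(b_h)\chi_0, \chi_0\ra_{L^2(\R^n)},
\]
where $b_h(x,\xi) := (\chi^2\tilde{a})\circ\tilde{\gamma}(x, \xi + \xi_0/h)$. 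The oscillations are thus absorbed and we are reduced to a pairing against a fixed Schwartz function.

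Next I would extract the leading semiclassical behavior. Pick $\psi \in C^\infty_c(\R^n;[0,1])$ with $\psi \equiv 1$ near $0$ and supported in a ball of radius less than $\|\xi_0\|/4$, and split $b_h = \psi(h\xi)\, b_h + (1 - \psi(h\xi))\,b_h$. On the support of $\psi(h\cdot)$, as soon as $h$ is small enough, the vector $\xi+\xi_0/h$ lies in the conic region where $(\chi^2\tilde{a})\circ\tilde{\gamma}$ is $\sigma$-homogeneous (this is exactly why we picked $\|\xi_0\|>2R_0$), so
\[
\psi(h\xi)\, b_h(x,\xi) = h^{-\sigma}\,\psi(h\xi)\,(\chi^2\tilde{a})\circ\tilde{\gamma}(x,\,h\xi+\xi_0).
\]
After the rescaling $\eta = h\xi$, the corresponding Weyl operator equals $h^{-\sigma}\Opw_h(c)$, where $\Opw_h$ denotes the semiclassical Weyl quantization with parameter $h$ and $c(x,\eta) := \psi(\eta)(\chi^2\tilde{a})\circ\tilde{\gamma}(x,\eta+\xi_0)$ is a fixed, $h$-independent, compactly supported smooth symbol. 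The standard semiclassical expansion (see e.g.~\cite[Ch.~4]{Zworski}) then gives
\[
\la \Opw_h(c)\chi_0,\chi_0\ra_{L^2(\R^n)} = \int_{\R^n} c(x,0)\,|\chi_0(x)|^2\,\di x + O(h) = \int_{\R^n} (\chi^2\tilde{a})\circ\tilde{\gamma}(x,\xi_0)\,|\chi_0(x)|^2 \,\di x + O(h),
\]
and by choice of $\xi_0$ the integrand is bounded above by $-c_\gamma \|\xi_0\|^\sigma |\chi_0(x)|^2$ on $\operatorname{supp}\chi_0$. This contributes $-c_\gamma \|\xi_0\|^\sigma \|\chi_0\|_{L^2}^2\, h^{-\sigma} + O(h^{1-\sigma})$ to the pairing.

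It remains to show that the complementary piece $\la \Opw((1 - \psi(h\xi))b_h)\chi_0,\chi_0\ra$ is negligible. Its symbol is supported where $|\xi|\gtrsim 1/h$, and since $\chi_0$ is a Schwartz function with rapidly decaying Fourier transform, repeated integrations by parts against the phase $(x-y)\cdot \xi$ in the Weyl kernel produce an $O(h^N)$ bound for every $N \in \N$. Combining the two estimates, for $h$ small enough the negative leading term dominates and provides the required upper bound with $c_1 := c_\gamma \|\chi_0\|_{L^2}^2 / 2$, which is uniform in $\sigma \in [0,1]$ since $\|\xi_0\|^\sigma \geq 1$ in that range. The main technical obstacle I expect is ensuring that all remainder constants (both in the semiclassical expansion and in the nonstationary phase bound) are uniform for $0 \leq \sigma \leq 1$; this is unproblematic because the $\sigma$-dependence of the seminorms of $c$ near $\eta = 0$ is controlled uniformly, the values $(\chi^2\tilde{a})\circ\tilde{\gamma}$ and its derivatives at the fixed point $\xi_0$ being uniformly bounded in $\sigma \in [0,1]$ by homogeneity.
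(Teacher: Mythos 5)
Your proposal is correct and establishes the lemma by a genuinely different decomposition from the paper's. The paper keeps the wavepacket $\tilde v_h$ as the test function throughout, inserts a low-frequency cutoff $\psi_{R_0 h^{-1}}$ (using non-stationary phase to show $\|\Opw(\psi_{R_0h^{-1}})\tilde v_h\|_{L^2}=\mathcal{O}(h)$, Calder\'on--Vaillancourt to control $\Opw((\chi^2\tilde a)\circ\tilde\gamma)\tilde v_h$ in $L^2$, and the composition rule to move the cutoff onto the symbol), then rescales to the semiclassical quantization, localizes near $\xi_0$, and finally evaluates the pairing by stationary phase against the oscillating wavepacket. You instead exploit the exact covariance of the Weyl calculus under phase-space translation to conjugate away the plane wave at the outset, reducing the matter to pairing a shifted symbol $b_h(x,\xi)=(\chi^2\tilde a)\circ\tilde\gamma(x,\xi+\xi_0/h)$ against the \emph{fixed} function $\chi_0$; after truncating in $\xi$ at scale $h^{-1}$, homogeneity extracts the $h^{-\sigma}$ prefactor and the leading contribution becomes a standard semiclassical limit at $\eta=0$, with the complementary piece killed by integration by parts in the spatial variable (which produces inverse powers of $|\xi|\gtrsim h^{-1}$). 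The inputs are the same --- homogeneity, the sign condition at $\xi_0$, and non-stationary phase --- but your conjugation trick avoids the Calder\'on--Vaillancourt and product-rule steps and replaces the stationary-phase-against-a-wavepacket computation by the simpler expansion of $\Oph(c)$ acting on an $h$-independent Schwartz function. One small caveat: when you invoke homogeneity to write $\psi(h\xi)\,b_h(x,\xi)=h^{-\sigma}\psi(h\xi)\,(\chi^2\tilde a)\circ\tilde\gamma(x,h\xi+\xi_0)$, you should note (as you implicitly do) that both $\xi+\xi_0/h$ and $h\xi+\xi_0$ lie in the region of exact homogeneity, which follows from $\|\xi_0\|>2R_0$ together with $|h\xi|<\|\xi_0\|/4$ on $\operatorname{supp}\psi(h\cdot)$; and for the high-frequency piece the integration by parts should be carried out in the $y$ (or $x$) variable of the Weyl kernel, so that each step gains a factor $|\xi|^{-1}\lesssim h$, the resulting $\xi$-integral being absolutely convergent once enough factors have been gained against the at-most-$\langle\xi\rangle^{\sigma}$ growth of the symbol. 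Your observation on uniformity in $\sigma\in[0,1]$ --- that the seminorms of $c$ are controlled because $\eta+\xi_0$ remains in a fixed compact set away from the origin --- is exactly the right justification.
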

Note that the last inequality follows from~\eqref{e.sobolev.norm.estimate} (thus the constant $C_0$ depends on $\xi_0$ and $\chi_0$). The above discussion combined with this Lemma yields that, for $h>0$ small enough (depending on $0<\sigma\leq 1$),
\begin{equation}\label{e.lower.bound.F}
F\left(\tilde{v}_h\circ\gamma \right)\leq  -\frac{c_1 C_0^{-2}}{2}\|\tilde{v}_h\circ\gamma \|_{\sigma}\|\tilde{v}_h\circ\gamma \|_{L^2}.
\end{equation}
As already explained, this yields the proof of Theorem~\ref{main.theorem.pseudo} with $v_\e=\tilde{v}_h\circ\gamma$. More precisely, we have shown the existence of a constant $\delta_1>0$ such that, for every $0<\sigma\leq\delta_1$ and for every $0<\e\leq\delta_1\sigma$, one can find $v_\e(0)\in\mathcal{H}^\sigma\setminus \{0\}$ (even $\mathcal{C}^\infty$) such that
\begin{equation}\label{e.center.ball.estimate}
\forall t>0,\quad\|v_\e(t)\|_\sigma\geq \delta_1\left(e^{\sigma\delta_1 t}\|v_\e(0)\|_\sigma+ \|v_\e(0)\|_{L^2}\right),
\end{equation}
where we have picked $\delta_1:=\min\{\tilde{\alpha},\tilde{\beta}/(C_{\tilde{a}}\tilde{\alpha}), c_1 C_0^{-2}/(2C_{\tilde{a}})\}$ and where $v_\e(t)$ is the solution to~\eqref{pseudo.transport} with initial datum $v_\e$.

We will explain below why this remains true in a small neighborhood of $v_\e(0)=\tilde{v}_h\circ\gamma$ and thus we will be done with the proof of Theorem~\ref{main.theorem.pseudo}.
\begin{proof}[Proof of Lemma~\ref{l.check.criterium}] We will make use of semiclassical pseudodifferential calculus as in~\cite{Zworski}. First, we introduce $\psi$ in $\mathcal{C}^{\infty}_c((-2,2);[0,1])$ which is identically equal to $1$ on $[-1,1]$ and we set $\psi_R(\xi):=\psi(\|\xi\|/R)$. Using non stationary phase Lemma~\cite{Zworski} and that $\|\xi_0\|>2R_0$, one can prove that $\|\Opw(\psi_{R_0h^{-1}})\tilde{v}_h\|_{L^2}=\mathcal{O}(h).$ Now using Calderon-Vaillancourt property~\ref{e.cv} with $s=\rho_1=\sigma$ and Lemma~\ref{l.norm.initial.data}, one finds that 
    $$
    \left\|\Opw((\chi^2\tilde{a})\circ\tilde{\gamma} ) \tilde{v}_h\right\|_{L^2}\leq \tilde{C}_a\left\|\tilde{v}_h\right\|_{\sigma}=\mathcal{O}(h^{-\sigma}).
    $$
Hence, gathering these two properties with the Cauchy-Schwarz inequality, one finds that
$$
\la \Opw((\chi^2\tilde{a})\circ\tilde{\gamma} ) \tilde{v}_h, \tilde{v}_h \ra_{L^2(\R^n)}=
\la \Opw(1-\psi_{R_0h^{-1}})\Opw((\chi^2\tilde{a})\circ\tilde{\gamma} ) \tilde{v}_h, \tilde{v}_h \ra_{L^2(\R^n)}+\mathcal{O}(h^{1-\sigma}).
$$
Using the composition rule for pseudodifferential operators together with~\eqref{e.cv} and the fact that $\|\tilde{v}_h\|_{L^2}=\mathcal{O}(1)$, we get
$$
\la \Opw((\chi^2\tilde{a})\circ\tilde{\gamma} ) \tilde{v}_h, \tilde{v}_h \ra_{L^2(\R^n)}=
\la \Opw((1-\psi_{R_0h^{-1}})(\chi^2\tilde{a})\circ\tilde{\gamma} ) \tilde{v}_h, \tilde{v}_h \ra_{L^2(\R^n)}+\mathcal{O}(1).
$$
Thanks to the introduction of the cutoff $(1-\psi_{R_0h^{-1}})$, the function $(\chi^2\tilde{a})\circ\tilde{\gamma} $ is now $\sigma$-homogeneous in the support of our cutoff function. Hence, using the semiclassical quantization $\Oph$ as in~\cite[Ch.4]{Zworski}, one finds
$$
\la \Opw((\chi^2\tilde{a})\circ\tilde{\gamma} ) \tilde{v}_h, \tilde{v}_h \ra_{L^2(\R^n)}=
h^{-\sigma}\la \Oph((1-\psi_{R_0})(\chi^2\tilde{a})\circ\tilde{\gamma} ) \tilde{v}_h, \tilde{v}_h \ra_{L^2(\R^n)}+\mathcal{O}(1).
$$
Let now $\tilde{\psi}\in\mathcal{C}^\infty_c(\R^n)$ which is equal to $1$ near $\xi_0$. Another application of the non stationary phase lemma yields that
$$
\la \Opw((\chi^2\tilde{a})\circ\tilde{\gamma} ) \tilde{v}_h, \tilde{v}_h \ra_{L^2(\R^n)}=
h^{-\sigma}\la \Oph(\tilde{\psi}(1-\psi_{R_0})(\chi^2\tilde{a})\circ\tilde{\gamma} ) \tilde{v}_h, \tilde{v}_h \ra_{L^2(\R^n)}+\mathcal{O}(1).
$$
The conclusion follows then from an application of the stationary phase Lemma (see~\cite[Ch.~5, Ex.~2]{Zworski} for the exact same integral) which gives
$$
\la \Oph(\tilde{\psi}(1-\psi_{R_0})(\chi^2\tilde{a})\circ\tilde{\gamma} ) \tilde{v}_h, \tilde{v}_h \ra_{L^2(\R^n)}=\int_{\R^d}(\chi^2\tilde{a})\circ\tilde{\gamma}(x,\xi_0) \chi_0^2(x)dx +o(1).
$$
By construction, this quantity is negative thanks to~\eqref{negative.escape} as $h\to 0^+$.
\end{proof} 

We can now conclude the proof of Theorem~\ref{main.theorem.pseudo}.
\begin{proof}[Proof of Theorem~\ref{main.theorem.pseudo}] Thanks to~\eqref{e.center.ball.estimate}, we have already the candidate for the center of the ball $v_\e(0)\neq 0$ in $\mathcal{H}^\sigma$ and it remains to prove that exponential growth is true in a small neighborhood of $v_\e(0)$. We let $\|u_\e(0)-v_\e(0)\|_\sigma\leq r$ with $r>0$ to be determined. Thanks to~\eqref{e.criterium.growth}, one has exponential growth if $F(u_\e(0))<0$. By continuity of the map $F$ on $\mathcal{H}^\sigma$ and thanks to~\eqref{e.lower.bound.F}, one has that, for $r>0$ small enough, 
$$
F(u_\e(0))\leq \frac{F(v_\e(0))}{2}\leq -\frac{c_1 C_0^{-2}}{4}\|v_\e(0)\|_\sigma\|v_\e(0)\|_{L^2}.
$$
Hence, using~\eqref{e.exp.lower.bound}, one finds that, for $t\geq 0$, 
$$
C_{\tilde{a}}\|u_\e(t)\|_\sigma\|u_\e(0)\|_{L^2}\geq e^{\tilde{\alpha}\sigma t} \frac{c_1 C_0^{-2}}{4}\|v_\e(0)\|_\sigma\|v_\e(0)\|_{L^2}.
$$
Choosing $r\leq \|v_\e(0)\|_{L^2}/2$, one has, for every $0\leq \sigma\leq 1$, 
$$
\|u_\e(0)\|_\sigma\leq r+\|v_\e(0)\|_\sigma\leq\frac{3}{2}\|v_\e(0)\|_\sigma,
$$
from which we infer the expected result
$$
C_{\tilde{a}}\|u_\e(t)\|_\sigma\geq e^{\tilde{\alpha}\sigma t} \frac{c_1 C_0^{-2}}{9}\|u_\e(0)\|_\sigma.
$$
\end{proof}
\begin{remark} Observe that our proof dealt with the general equation~\eqref{pseudo.transport} whose principal symbol is given by a (time-dependent) transport perturbation term. In fact, instead of $\e\xi(\mathcal{P}(t,x))$, we could also have considered perturbations of the form $\e p_\e(t,x,\xi)$ with $p_\e\in\mathcal{C}^0_b(\R;\bS^1(M)))$ with $p_\e$ real valued and with all the seminorms of $p_\e$ uniformly bounded in terms of $0\leq\e\leq 1.$
\end{remark}

\section{Application to periodic transport equations}\label{sec.2D}

In this last section we prove Theorem \ref{theorem.torus} and its extension in dimension $n>2$. Let us recall the equation we consider (see equation \eqref{transport.torus}):  
\begin{equation}\label{transp.torus}
    \pa_t u= (\nu + \e V(t,x)) \cdot \nabla_x u + \frac{\e}{2} \div(V(t,x)) u, \qquad x \in \T^n, \, t \in \R,
\end{equation}
where $\nu \in \N_+^n$, $V(t, x) \in C^\infty(\T \times \T^n)$ and $\div(\cdot)$ denotes the divergence with respect to the Euclidean volume on $\T^n$. In this section, we often drop the index $\e$ for the solution of equation \eqref{transp.torus} and we simply use $u(t, x)$ even if the solution depends on the parameter $\e>0$ (which requires some attention at certain steps).  

Let us start by giving a scheme of the proof, in order to explain why this result comes as an application of Theorem \ref{main.theorem}. We first perform a \emph{resonant normal form reduction} adapted from \cite{BambusiLangellaMontalto}, conjugating equation \eqref{transp.torus} to an equation with the same structure as \eqref{pseudo.transport}, but with leading transport term which is completely resonant with respect to the frequency $\nu$ (see Definition \eqref{def.res.average}). This reduces the study of instabilities of equation \eqref{transp.torus}, to that of an equation analogous to the one previously studied in this article. 
Indeed, after the normal form procedure, the transport term $\nu + \e V(t,x)$ in \eqref{transp.torus} is reduced to a time independent one (up to small-in-size reminders), given by the resonant average $$
\langle V\rangle_\nu(x):=\frac{1}{2\pi}\int_0^{2\pi}V(\tau, x-\tau \nu)d\tau\
\in\ C^\infty(\T^n)
$$ 
of $V$. 
Equivalently, $\la V \ra_\nu$ is obtained selecting only the resonant Fourier modes of $V$ (see \eqref{eq.res.average}) with respect to the periodic flow $\varphi^\tau(x)=x+\tau\nu$, $x\in\T^n$. Loosely speaking, these Fourier modes are the ones provoking the unstable dynamics.

Next, using Theorem \ref{main.theorem}, we can identify the set of perturbations $V$ such that their resonant average $\la V \ra_\nu$ will provoke unstable dynamics. 
Finally, we show that, for $n=2$, such set of potentials is generic. Let us remark that it is for this last part of the proof that we need to ask that $M=\T^2$. We are indeed able to perform the resonant normal form on all tori, i.e. $M=\T^n$, for all $n \in \N$, but in order to achieve genericity, we need to use the fact that Morse-Smale vector fields are generic in compact manifolds of dimension two (see Peixoto \cite{Peixoto1962} and Remark \ref{generic.2d}). 
This is the reason why the only possible choice (through our approach) to state Theorem \ref{theorem.torus}
is to pick $M=\T^2$ if we aim at proving genericity among smooth perturbations.

\subsection{The Resonant Normal Form}\label{section.normalform}
In this section, we perform the normal form reduction. We use the ideas from \cite{BambusiLangellaMontalto} and we adapt them to our context. First of all, given a $C^\infty$ function $V : \T \times \T^n \to \R^n$, with components $V(t, x):=(V_1(t,x),\ldots, V_n(t,x)),$ 
we use the standard compact notation for the Fourier coefficients of $V$: 
\begin{equation}\label{fourier}
V(t,x):= \sum_{\substack{k \in \Z^n\\ \ell \in \Z}} \underbrace{v_{k, \ell}}_{\in \C^n} e^{i(k\cdot x + \ell t)}, \qquad  \forall (t, x) \in \T \times \T^n,
\end{equation}
where $\{v_{k, \ell}\}_{k, \ell}$ are the Fourier coefficients of $V$. Then, we can define the resonant average of a function as follows.
\begin{definition}[Resonant average]\label{def.res.average}
    Let $V(t, x) \in C^{\infty}(\T^{n+1}; \R^n)$ and recall the notation in \eqref{fourier} for the Fourier coefficients of $V$. For any $\nu \in \N_+^n$, we define the \emph{resonant average} of $V$ with respect to the frequency vector $\nu$ as 
\begin{equation}\label{eq.res.average}
    \la V \ra_\nu(x):= \sum_{k \in \Z^2} v_{k, \nu \cdot k} e^{i k \cdot x}, \quad x \in \T^n.
\end{equation} 
\end{definition}

\begin{remark}
One can verify that this definition coincides with the earlier convention,
\begin{equation}\label{integral.average}
\la V \ra_\nu(x):= \frac{1}{2\pi} \int_{0}^{2\pi} V(t, x- \nu t) \di t.
\end{equation}
\end{remark}
The main result of this section is the following Proposition: 
\begin{proposition}\label{prop.normal.form}
There exists $\e_0>0$ sufficiently small such that, for every $0\leq \e\leq \e_0$, there exists an invertible linear map 
$$
\Phi_\e(t): \mathcal{D}^\prime(\mathbb{T}^n) \to  \mathcal{D}^\prime(\mathbb{T}^n), 
$$
and a vector field $W_{\nu,\e} \in C^\infty(\T^{n+1}; \R^n)$, with all its seminorms uniformly bounded in terms of $\e\in[0,\e_0]$ such that the following properties hold:
\begin{enumerate}
 \item $\Phi_\e(t)$ is $2\pi$-periodic,
 \item for all $\sigma\in\R_+$, there exists $C_\sigma>0$ such that, for all $0\leq\e\leq\e_0$ and for all $t \in \R$
 $$
 \left\|\Phi_\e(t)\right\|_{\cH^\sigma\to\cH^\sigma}+\left\|\Phi_\e(t)^{-1}\right\|_{\cH^\sigma\to\cH^\sigma}\leq C_\sigma,
 $$
 \item the constant $C_\sigma$ is uniformly bounded when $\sigma$ lies in a compact interval,
 \item for any solution $u_\e(t, x)$ to equation \eqref{transp.torus}, the function $v_\e(t,x)$ defined by $u_\e(t,x):=\Phi_\e(t) v_\e(t,x)$
satisfies
\begin{equation}\label{normal.form.eq}
    \pa_t v_\e(t,x)= \Opw(i \xi ( \e \la V \ra_\nu(x) + \e^2 W_{\nu,\e}(t,x))) v_\e(t,x).
\end{equation}
\end{enumerate}
\end{proposition}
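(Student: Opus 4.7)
The plan is a standard two-step resonant normal form reduction: first move to the interaction picture for the unperturbed flow $\partial_t u = \nu\cdot\nabla_x u$, and then perform a near-identity conjugation that averages out the time-dependent, non-resonant part of the perturbation. Throughout, I write $\mathcal{T}_W := W\cdot\nabla_x + \tfrac12\text{div}(W)$ for the skew-adjoint transport operator associated to a vector field $W$ on $\T^n$, so that \eqref{transp.torus} reads $\partial_t u = \mathcal{T}_\nu u + \e\,\mathcal{T}_{V(t)} u$ (note $\mathcal{T}_\nu = \nu\cdot\nabla_x$ since $\text{div}(\nu)=0$).

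\textit{Step 1 (Interaction picture).} I would introduce $U(t):=e^{t\nu\cdot\nabla_x}$, the translation $U(t)f(x)=f(x+t\nu)$, which is isometric on every $\mathcal{H}^\sigma(\T^n)$ and, crucially because $\nu\in\N^n$, is $2\pi$-periodic in $t$. Setting $u=U(t)w$, a direct computation yields
\begin{equation*}
\partial_t w = \e\,\mathcal{T}_{\tilde V(t)} w, \qquad \tilde V(t,x):=V(t,x-t\nu),
\end{equation*}
with $\tilde V\in C^\infty(\T\times\T^n;\R^n)$ (again using $\nu\in\N^n$) whose time-average is exactly $\langle V\rangle_\nu(x)$ by \eqref{integral.average}. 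I would decompose $\tilde V(t,x) = \langle V\rangle_\nu(x) + Y(t,x)$ and set $Z(t,x):=\int_0^t Y(s,x)\,ds$; the zero-mean property of $Y$ in $t$ makes $Z$ itself smooth and $2\pi$-periodic in $t$.

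\textit{Step 2 (Normal form conjugation).} For each fixed $t$, let $\psi^\e_t$ denote the time-$\e$ flow on $\T^n$ of the smooth vector field $Z(t,\cdot)$, and define
\begin{equation*}
\Psi_\e(t)v(x):= v\!\left(\psi^\e_t(x)\right)\sqrt{\det D_x\psi^\e_t(x)}.
\end{equation*}
For $\e_0$ small, $(\e,t)\mapsto\psi^\e_t$ is a smooth family of diffeomorphisms of $\T^n$, $2\pi$-periodic in $t$, so $\Psi_\e(t)$ is unitary on $L^2(\T^n)$, $2\pi$-periodic, and uniformly bounded with its inverse on every $\mathcal{H}^\sigma(\T^n)$: these are the standard composition-operator bounds, whose constants depend continuously on $\sigma$ and are uniform in $\e\in[0,\e_0]$. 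Setting $w=\Psi_\e(t)v$ and using Cartan's identity $U_\phi^{-1}\mathcal{T}_W U_\phi = \mathcal{T}_{\phi^*W}$ for the conjugation of a skew-adjoint transport operator by a volume-weighted pullback, the equation for $v$ becomes
\begin{equation*}
\partial_t v = \mathcal{T}_{A(t,\e)} v, \qquad A(t,\e):= \e\,(\psi^\e_t)^*\tilde V(t) - \int_0^\e (\psi^s_t)^* Y(t)\,ds,
\end{equation*}
the second term coming from the Duhamel formula for $\Psi_\e^{-1}\partial_t\Psi_\e$ together with $\partial_t Z = Y$. A Taylor expansion of the flow pullback, $(\psi^s_t)^*W = W + s[Z(t),W] + O(s^2)$, then gives
\begin{equation*}
A(t,\e) = \e\bigl(\tilde V(t)-Y(t)\bigr) + \e^2\,\tilde W_{\nu,\e}(t,x) = \e\,\langle V\rangle_\nu(x) + \e^2\,\tilde W_{\nu,\e}(t,x),
\end{equation*}
with $\tilde W_{\nu,\e}\in C^\infty(\T\times\T^n;\R^n)$ whose seminorms are uniform in $\e\in[0,\e_0]$.

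\textit{Conclusion and main difficulty.} The total transformation $\Phi_\e(t):=U(t)\Psi_\e(t)$ is $2\pi$-periodic (product of two $2\pi$-periodic operators), uniformly bounded with its inverse on $\mathcal{H}^\sigma$ continuously in $\sigma$ and uniformly in $\e\in[0,\e_0]$, and by construction sends \eqref{transp.torus} directly to \eqref{normal.form.eq} with $W_{\nu,\e}:=\tilde W_{\nu,\e}$. The conceptual content is classical; the actual technical work amounts to (i) verifying from the Dyson-type expansion above that the $\e^2$ remainder is genuinely a transport-type operator with uniformly controlled seminorms, so that \eqref{normal.form.eq} fits the framework of Theorem~\ref{main.theorem.pseudo}; and (ii) carefully tracking the role of the integer hypothesis $\nu\in\N^n$, which is exactly what makes $\tilde V$, $Z$, $\Psi_\e$ and finally $\Phi_\e$ all $2\pi$-periodic in $t$. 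Any irrational $\nu$ would immediately break this periodicity and push one back into the Diophantine non-resonant regime of \cite{BambusiLangellaMontalto}.
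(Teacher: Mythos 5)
Your proof takes essentially the same route as the paper but organizes it in the reverse order, and this is worth spelling out. The paper conjugates first by a near-identity transformation $\tilde\Phi(t)$ built from a diffeomorphism $x\mapsto x+\e\beta(t,x)$, determining $\beta$ by solving the homological equation directly in Fourier ($\beta_{k,\ell}=v_{k,\ell}/i(\ell-k\cdot\nu)$ off resonance), and only afterwards applies the time translation $\cU(t)v(x)=v(x-\nu t)$ to remove the constant drift. You invert the order: first pass to the interaction picture via $U(t)=e^{t\nu\cdot\nabla_x}$, then average out the remaining zero-mean part $Y=\tilde V-\langle V\rangle_\nu$ by conjugating with the unitary pullback along the time-$\e$ flow of the primitive $Z(t,\cdot)=\int_0^t Y(s,\cdot)\,ds$. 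These are the same reduction in disguise: your $Z$, read in Fourier, has coefficients $v_{k,m+k\cdot\nu}/im$ for $m\neq 0$, which is the paper's $\beta$ after the change of variables $x\mapsto x-\nu t$ (up to a harmless $t$-independent shift coming from the lower limit of your integral). What your ordering buys is that one never needs to compute the pushforward of $\nu\cdot\nabla_x$ through the near-identity conjugation, and the homological equation becomes the elementary ODE $\partial_t Z = Y$ rather than $(\partial_t-\nu\cdot\nabla_x)\beta = V - V_{R_\nu}$; what it costs is that the near-identity conjugation is now applied to a time-dependent vector field whose time derivative feeds into the Duhamel term, which you then have to expand. Net effort is comparable.

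Two small technical inaccuracies, neither of which threatens the argument since both are absorbed into the $O(\e^2)$ remainder $W_{\nu,\e}$. First, the conjugation identity should read $\Psi_\e(t)^{-1}\mathcal{T}_W\Psi_\e(t)=\mathcal{T}_{(\psi^\e_t)_*W}$ (pushforward, not pullback): a direct computation on $P_\phi f:=f\circ\phi$ gives $P_\phi^{-1}\mathcal{L}_W P_\phi=\mathcal{L}_{\phi_*W}$, and the Jacobian factor only restores skew-adjointness. Second, the Duhamel term should be $\Psi_\e(t)^{-1}\partial_t\Psi_\e(t)=\mathcal{T}_{W_t}$ with
\begin{equation*}
W_t=\int_0^\e (\psi^\e_t)_*(\psi^s_t)^* Y(t)\,ds,
\end{equation*}
i.e.\ with an outer $(\psi^\e_t)_*$ that your formula omits. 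Both corrections agree with your expressions at order $\e$, so the conclusion $A(t,\e)=\e\langle V\rangle_\nu+\e^2\tilde W_{\nu,\e}$ stands; I mention them only because you present the exact formulas rather than working modulo $O(\e^2)$ from the start. You correctly identify that the integer hypothesis $\nu\in\N^n$ is precisely what makes $U(t)$, $\tilde V$, $Z$ and hence $\Phi_\e(t)$ all $2\pi$-periodic; this is also where the paper uses it.
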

Remark that equation \eqref{normal.form.eq} is a particular case of \eqref{pseudo.transport} with $b_{\e}=0$. 
Before proving this proposition, let us write the following corollary that motivates the choice of the normal form and that allows to derive unstable solutions to~\eqref{transport.torus} from the unstable solutions arising in Theorem~\ref{main.theorem} (as soon as $\langle V\rangle_\nu$ is Morse-Smale). 

\begin{corollary}\label{equivalent.growth}
Let $u_\e(t,x)$ be a solution of \eqref{transp.torus} and let $v_\e(t,x)$ be the corresponding solution of \eqref{normal.form.eq} (i.e. $u_\e=\Phi_\e(t) v_\e$ with $\Phi_\e(t)$ given in Proposition \ref{prop.normal.form}). If
\begin{equation}\label{e.growth}
    \norm{v_\e(t)}_{\sigma} \geq \tilde{\delta}_0 e^{\tilde{\delta}_0 \sigma \e t} \norm{v_\e(0)}_{\sigma}, \quad \mbox{ for some }\tilde{\delta}_0>0, \sigma\geq 0, 
\end{equation}
then one has
\begin{equation}\label{e.growth.2}
    \norm{u_\e(t)}_{\sigma} \geq C_\sigma^{-2}\tilde{\delta}_0 e^{\tilde{\delta}_0  \sigma \e t}\norm{u_\e(0)}_{\sigma}, \quad \mbox{ for some } \tilde{\delta}_0>0, \sigma\geq 0, 
\end{equation}
where $C_\sigma$ is the constant from Proposition~\ref{prop.normal.form}.
\end{corollary}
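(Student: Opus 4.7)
The plan is to prove this as an essentially immediate consequence of the uniform boundedness of $\Phi_\e(t)$ and $\Phi_\e(t)^{-1}$ on $\mathcal{H}^\sigma$ stated in Proposition~\ref{prop.normal.form}. The relation $u_\e(t,x) = \Phi_\e(t) v_\e(t,x)$ must be used at two times: at time $t>0$ to pass a lower bound on $\|v_\e(t)\|_\sigma$ to a lower bound on $\|u_\e(t)\|_\sigma$, and at time $0$ to pass an upper bound on $\|u_\e(0)\|_\sigma$ to an upper bound on $\|v_\e(0)\|_\sigma$.

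More precisely, the first step is to write $v_\e(t) = \Phi_\e(t)^{-1} u_\e(t)$ and apply the bound $\|\Phi_\e(t)^{-1}\|_{\mathcal{H}^\sigma\to\mathcal{H}^\sigma} \leq C_\sigma$ to obtain
\begin{equation}
\|v_\e(t)\|_\sigma \leq C_\sigma \|u_\e(t)\|_\sigma, \qquad\text{i.e.,}\qquad \|u_\e(t)\|_\sigma \geq C_\sigma^{-1} \|v_\e(t)\|_\sigma.
\end{equation}
Symmetrically, $u_\e(0) = \Phi_\e(0) v_\e(0)$ together with $\|\Phi_\e(0)\|_{\mathcal{H}^\sigma\to\mathcal{H}^\sigma} \leq C_\sigma$ yields
\begin{equation}
\|u_\e(0)\|_\sigma \leq C_\sigma \|v_\e(0)\|_\sigma, \qquad\text{i.e.,}\qquad \|v_\e(0)\|_\sigma \geq C_\sigma^{-1} \|u_\e(0)\|_\sigma.
\end{equation}

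The second and final step is to chain these two inequalities with the assumed growth \eqref{e.growth}:
\begin{equation}
\|u_\e(t)\|_\sigma \geq C_\sigma^{-1}\|v_\e(t)\|_\sigma \geq C_\sigma^{-1}\tilde{\delta}_0 e^{\tilde{\delta}_0 \sigma \e t} \|v_\e(0)\|_\sigma \geq C_\sigma^{-2}\tilde{\delta}_0 e^{\tilde{\delta}_0 \sigma \e t} \|u_\e(0)\|_\sigma,
\end{equation}
which is exactly~\eqref{e.growth.2}. There is no real obstacle here: the statement is a bookkeeping corollary encoding the fact that conjugation by a uniformly bounded and invertible family of operators preserves exponential growth of Sobolev norms up to a multiplicative loss of $C_\sigma^{-2}$. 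The uniformity of $C_\sigma$ in $t\in\mathbb{R}$ and in $\e\in[0,\e_0]$ provided by items (2)-(3) of Proposition~\ref{prop.normal.form} ensures that the constant on the right-hand side of~\eqref{e.growth.2} depends only on $\sigma$ (and not on $t$ or $\e$), which is what is needed for the subsequent application to Theorem~\ref{theorem.torus}.
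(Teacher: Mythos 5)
Your proof is correct and is the natural (indeed essentially the only) argument: the paper leaves this corollary unproved precisely because it is the immediate two-step conjugation bookkeeping you carry out, using the uniform $\cH^\sigma\to\cH^\sigma$ bounds on $\Phi_\e(t)$ and $\Phi_\e(t)^{-1}$ from Proposition~\ref{prop.normal.form} at times $t$ and $0$ respectively. No issues.
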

Equivalently, the behavior of the Sobolev norms of solutions to \eqref{normal.form.eq} is the same as that to \eqref{transp.torus}, up to constants. Recall that the existence of solutions to~\eqref{normal.form.eq} verifying~\eqref{e.growth} follows from Theorem~\ref{main.theorem.pseudo} as soon as $\langle V\rangle_\nu$ has the Morse-Smale property:
\begin{theorem}\label{theorem.torus.n} 
Let $\omega = 1$ and $\nu\in\N^2\setminus\{0\}$. Suppose that $\langle V\rangle_\nu$ satisifies the Morse-Smale property and, if $n=1$, suppose in addition that $\langle V\rangle_\nu$ has at least one critical point.

Then, there exists $\delta_0>0$ such that, for every $0 \leq \sigma \leq \delta_0$ and for every $0 \leq \e \leq \delta_0 \sigma^2$, one can find $v_\e \in \cH^\sigma(\T^2)$ and $r:=r(\e, \sigma, v_\e)>0$ such that 
\begin{equation}
    \norm{u_\e(0) - v_\e}_\sigma \leq r \quad \implies \quad \norm{u_\e(t)}_\sigma \geq \delta_0 e^{\delta_o \sigma t\e} \norm{u_\e (0)}_{\sigma}, \quad \mbox{ for any } t >0, 
\end{equation}
where $u_\e(t)$ is the solution to equation \eqref{transport.torus} with initial datum $u_\e(0)$.
\end{theorem}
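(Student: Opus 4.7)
The plan is to combine Proposition~\ref{prop.normal.form} with Theorem~\ref{main.theorem.pseudo}, reducing the stability question for \eqref{transport.torus} to one for the normal form \eqref{normal.form.eq} in which the leading symbol is driven by the resonant average $\langle V\rangle_\nu$. The argument has three steps. First, I apply Proposition~\ref{prop.normal.form} to obtain a $2\pi$-periodic conjugation $\Phi_\e(t)$ that is uniformly bounded on every $\mathcal{H}^\sigma(\T^n)$, transforming \eqref{transport.torus} into
\begin{equation}
 \partial_t v_\e = \OpM\bigl(i\xi\bigl(\e\langle V\rangle_\nu(x) + \e^2 W_{\nu,\e}(t,x)\bigr)\bigr)v_\e.
\end{equation}
Second, I rescale time by $\tau = \e t$, setting $w_\e(\tau,x) := v_\e(\tau/\e,x)$. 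The rescaled equation reads
\begin{equation}
 \partial_\tau w_\e = \OpM\bigl(i\xi\bigl(\langle V\rangle_\nu(x) + \e\, \widetilde W_{\nu,\e}(\tau,x)\bigr)\bigr) w_\e,
\end{equation}
with $\widetilde W_{\nu,\e}(\tau,x) := W_{\nu,\e}(\tau/\e,x)$. Since composition with a time reparametrization does not affect $C^0_b(\R;\fX(\T^n))$-bounds, the vector field $\widetilde W_{\nu,\e}$ has seminorms uniformly bounded in $\e$. This is precisely an equation of the type \eqref{pseudo.transport} with base vector field $\langle V\rangle_\nu$ (Morse-Smale by assumption) and perturbation $\e\widetilde W_{\nu,\e}$.

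Third, I invoke Theorem~\ref{main.theorem.pseudo} applied to this rescaled equation. It yields a constant $\tilde\delta_0 > 0$ (depending only on $\langle V\rangle_\nu$) such that, for every $0\leq \sigma\leq \tilde\delta_0$ and every $0\leq \e \leq \tilde\delta_0\sigma^2$, there exist $w_\e^0 \in \mathcal{H}^\sigma(\T^n)$ and $r>0$ with
\begin{equation}
 \|w(0)-w_\e^0\|_\sigma\leq r \ \Longrightarrow\ \|w(\tau)\|_\sigma \geq \tilde\delta_0 e^{\tilde\delta_0 \sigma\tau}\|w(0)\|_\sigma,\quad \forall\tau>0.
\end{equation}
Undoing the time rescaling ($\tau = \e t$) converts this into exponential growth of rate $\tilde\delta_0\sigma\e$ for $v_\e$. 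Finally, I set $\tilde v_\e := \Phi_\e(0) w_\e^0$ and transfer everything back via $u_\e(t) = \Phi_\e(t) v_\e(t)$: by the uniform $\mathcal{H}^\sigma$-bounds on $\Phi_\e(t)^{\pm 1}$ provided by Proposition~\ref{prop.normal.form} (and Corollary~\ref{equivalent.growth}), the inequality $\|u_\e(0)-\tilde v_\e\|_\sigma \leq r/C_\sigma$ implies $\|v_\e(0)-w_\e^0\|_\sigma \leq r$, and then
\begin{equation}
 \|u_\e(t)\|_\sigma \geq C_\sigma^{-2}\tilde\delta_0 e^{\tilde\delta_0\sigma\e t}\|u_\e(0)\|_\sigma.
\end{equation}
Choosing $\delta_0 := \min\bigl(\tilde\delta_0,\, C_{\tilde\delta_0}^{-2}\tilde\delta_0\bigr)$ and using item (3) of Proposition~\ref{prop.normal.form} (uniform bound of $C_\sigma$ for $\sigma$ in the compact interval $[0,\tilde\delta_0]$) gives the conclusion.

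The step I expect to require the most care is the interplay between the two small parameters $\e$ and $\sigma$ in the application of Theorem~\ref{main.theorem.pseudo}: the perturbation $\e\widetilde W_{\nu,\e}$ in the rescaled equation must satisfy the hypothesis $\e\leq \tilde\delta_0\sigma^2$ of that theorem, which translates back into the same constraint $\e\leq \delta_0\sigma^2$ in the original $t$-variable, so no extra smallness is forced by the time rescaling. The rest is essentially bookkeeping: verifying that the selfadjoint Weyl-remainder produced when passing between $\mathcal{L}_X$ and $\OpM(i\xi\cdot X)$ (cf.\ Lemma~\ref{order.one.symbol}) fits in the slot of the $b_\e$-term in~\eqref{pseudo.transport}, and that the $r$-ball of admissible initial data survives transport through the bounded isomorphism $\Phi_\e(0)$. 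This completes the reduction to Theorem~\ref{main.theorem.pseudo}.
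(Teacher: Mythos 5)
Your proposal is correct and follows the same route the paper intends: conjugate by the normal form from Proposition~\ref{prop.normal.form}, pass from \eqref{normal.form.eq} to an equation of type \eqref{pseudo.transport} with base vector field $\langle V\rangle_\nu$ via the time rescaling $\tau=\e t$, apply Theorem~\ref{main.theorem.pseudo}, and transfer back with Corollary~\ref{equivalent.growth}. The paper leaves the rescaling implicit (it only records the resulting rate $\delta_0\sigma\e$ in \eqref{e.growth}), so your spelled-out version is a faithful reconstruction; the only tiny omission is that the final $\delta_0$ should also be chosen small enough to guarantee $\e\le\e_0$ from Proposition~\ref{prop.normal.form}.
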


We now turn to the proof of Proposition \ref{prop.normal.form}. 

\subsubsection{Preliminary constructions}

As already explained, we drop the index $\e$ at several places (e.g. $u,v,\Phi$ instead of $u_\e,v_\e,\Phi_\e$) in view of alleviating the notations. We now need some preliminary remarks and notation that we introduce following \cite[Section 3.1]{BambusiLangellaMontalto}. First of all, recalling the definition in \eqref{sobolev.spaces} of the Sobolev spaces $\cH^\sigma$, $\sigma \in \R$, let $A(t) : \cH^\sigma \to \cH^\sigma$ be any linear invertible transformation, periodic in time of period $2\pi$. For any differential equation of the form $\pa_t u = H u$, defining the function $v(t,x)$ as $u=A(t)v$, one can check that $v$ solves $\dot{v}=A(t)_*H v$, where the pushforward $A(t)_*H$ is given by 
\begin{equation}\label{pushforward}
A(t)_*H:=A(t)^{-1}[H A(t) -\pa_t A(t)].
\end{equation}
We remark, to avoid any confusion in the notation, that $\pa_t A(t)$ is the derivative in time of the linear transformation (and not the composition $\pa_t \circ A(t)$). 
The core of the proof of Proposition \ref{prop.normal.form} consists in choosing the proper transformation $A(t)$ and computing the corresponding pushforward in \eqref{pushforward}. To this aim, we consider a family of diffeomorphisms of the torus of the form 
\begin{equation}\label{diffeo}
\varphi(t): \T^n \to \T^n, \quad x \mapsto x +\e \beta(t, x),
\end{equation}
where $\beta \in C^{\infty}(\T \times \T^n; \R^n)$ is a function to be determined.
It has been proved (see for example \cite[Lemma B.4]{Baldi}) that, for $\e>0$ small enough, this diffeomorphism is invertible and its inverse has the form 
\begin{equation}\label{inv.diffeo}
   \varphi(t)^{-1}: \T^n \to \T^n, \quad y \mapsto y + \e \tilde{\beta}_\e(t, y),
\end{equation}
for $\tilde{\beta}_\e \in C^{\infty}(\T \times \T^n; \R^n)$. Moreover, $\tilde{\beta}_\e$ has all its derivatives uniformly bounded in terms of $\e>0$ in the admissible range of $\e$. Note that $\beta$ is chosen independently of $\e$ in our construction and that $\varphi$ depends implicitly on $\e$.

We define the transformation associated with \eqref{diffeo} as: 
\begin{equation}\label{Phi}
   \tilde{\Phi}(t)u(t, x):= \det(\text{Id} + \e \nabla_x \beta(t,x))^{\frac12} u \circ \varphi(t,x), \quad \forall u \in C^\infty(\T; \cH^\sigma(\T^n)) \quad \forall t \in \T, \forall \sigma \in \R,
\end{equation}
with inverse
\begin{equation}\label{Phi.inv}
   \tilde{\Phi}(t)^{-1}u(t, x):= \det(\text{Id} + \e \nabla_x \tilde \beta_\e(t,x))^{\frac12} u \circ \varphi^{-1}(t,x), \quad \forall u \in C^\infty(\T; \cH^\sigma(\T^n)) \quad \forall t \in \T, \forall \sigma \in \R.
\end{equation}

\noindent Remark that 
\begin{equation}\label{inv.det}
    \det(\text{Id}+ \e \nabla_y \tilde\beta_\e(t,y))= \frac{1}{\det(\text{Id} +\e\nabla_x \beta(t,x))\vert_{x=y + \e \tilde{\beta}(t, y)}}, \quad \forall y \in \T^n.
\end{equation}
Using \eqref{inv.det} and via a direct computation one can check that 
\begin{equation}\label{unitary}
    \tilde{\Phi}(t)^*=\tilde{\Phi}(t)^{-1}, \quad \forall t \in \T,
\end{equation}
where $*$ denotes the adjoint with respect to the standard inner product on $L^2$. Finally, defining 
\begin{equation}\label{H(t)}
    H(t):= (\nu + \e V(t,x)) \cdot \nabla_x + \frac{\e}{2} \div(V(t,x)), 
\end{equation}
where $\nu$ and $V$ are as in \eqref{transp.torus}, (so that equation \eqref{transp.torus} reads $\dot{u}=H(t) u$), and using formula \eqref{pushforward}, we see that the pushforward of $H(t)$ in \eqref{H(t)} with respect to $\tilde\Phi(t)$ in \eqref{Phi} is given by 
\begin{equation}\label{Phi.star}
\tilde\Phi(t)_* H = \tilde\Phi(t)^{-1} H \tilde\Phi(t) - \tilde\Phi(t)^{-1}\pa_t \tilde \Phi(t). 
\end{equation}
The following Lemma computes explicitly this pushforward. 

\begin{lemma}\label{lemma.expansion}
    Let $H$ as in \eqref{H(t)} and $\tilde{\Phi}$ as in \eqref{Phi}. Then the order one differential operator $\tilde\Phi(t)^{-1}H\tilde\Phi(t)$ is skew adjoint and the following identities hold: 
\begin{equation}\label{expand.H1}
\tilde\Phi(t)^{-1}H\tilde\Phi(t)= \Opw(i \xi( \nu + \e V(t,x) + \e \nu \cdot \nabla_x \beta +\e^2\mathcal{R}_\e^1(t,x))), 
\end{equation}
and 
\begin{equation}\label{expand.H2}
    \tilde{\Phi}(t)^{-1}\pa_t\tilde{\Phi}(t)=\Opw(i\xi(\e\pa_t \beta +\e^2\mathcal{R}_\e^2(t,x))),
\end{equation}
with $\mathcal{R}_\e^{1/2}(t,x)\in C^\infty(\T^{n+1}; \R)$  having all their seminorms uniformly bounded in terms of $\e\in[0,\e_0]$.
\end{lemma}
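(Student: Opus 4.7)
The plan is to exploit the unitarity of $\tilde\Phi(t)$ (stated in~\eqref{unitary}) to handle the skew-adjointness and then to reduce both identities to explicit pullback/pushforward computations that can be Taylor-expanded in $\e$. For the first identity, since $H(t)$ defined in~\eqref{H(t)} is skew-adjoint on $L^2$ for every $t$, conjugation by the unitary $\tilde\Phi(t)$ produces another skew-adjoint operator. For the second identity, differentiating the identity $\tilde\Phi(t)^*\tilde\Phi(t)=\mathrm{Id}$ in $t$ immediately gives $(\tilde\Phi(t)^{-1}\partial_t\tilde\Phi(t))^*=-\tilde\Phi(t)^{-1}\partial_t\tilde\Phi(t)$. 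Now recall that the Weyl quantization of $i\xi\cdot W(x)$ for a real smooth vector field $W$ is exactly the skew-adjoint operator $W\cdot\nabla_x+\tfrac{1}{2}\operatorname{div}(W)$. So once both operators are shown to be first-order skew-adjoint differential operators, they are automatically of the form $\Opw(i\xi\cdot W(t,x))$; the real content is to identify $W$ modulo $\e^2$.

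For $\tilde\Phi^{-1}H\tilde\Phi$, I would first rewrite $H=X\cdot\nabla_x+\tfrac{1}{2}\operatorname{div}(X)$ with $X(t,x):=\nu+\e V(t,x)$, using that $\operatorname{div}_x\nu=0$. Next I would establish the conjugation identity
$$
\tilde\Phi(t)^{-1}\bigl(X\cdot\nabla_x+\tfrac{1}{2}\operatorname{div}(X)\bigr)\tilde\Phi(t)=(\varphi(t)_*X)\cdot\nabla_y+\tfrac{1}{2}\operatorname{div}_y(\varphi(t)_*X),
$$
by directly applying $\tilde\Phi^{-1}\circ H\circ \tilde\Phi$ to a test function and using the chain rule together with the divergence identity $\tfrac{1}{2}\partial_t\log J=\tfrac{1}{2}\operatorname{div}(\partial_t\varphi\circ\varphi^{-1})$ (Jacobi's formula), which ensures that the zero-order ``multiplication'' term reconstitutes itself into exactly $\tfrac{1}{2}\operatorname{div}$ of the pushforward. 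Finally, since $\varphi(t)(x)=x+\e\beta(t,x)$, one has
$$
\varphi(t)_*X(y)=(I+\e\nabla_x\beta(t,x))(\nu+\e V(t,x))\Big|_{x=\varphi(t)^{-1}(y)},
$$
and expanding both $(I+\e\nabla\beta)(\nu+\e V)=\nu+\e V+\e(\nu\cdot\nabla)\beta+\e^2\nabla\beta\cdot V$ and $x=y-\e\tilde\beta_\e(t,y)$ via Taylor's formula yields $\varphi(t)_*X(y)=\nu+\e V(t,y)+\e\,\nu\cdot\nabla_y\beta(t,y)+\e^2\mathcal{R}^1_\e(t,y)$, with $\mathcal{R}^1_\e$ real-valued smooth and with seminorms uniformly bounded in $\e$ because $\beta,\tilde\beta_\e,V$ are.

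For the second identity $\tilde\Phi^{-1}\partial_t\tilde\Phi$, I would differentiate the defining formula~\eqref{Phi} in $t$ and then apply $\tilde\Phi^{-1}$ in the form~\eqref{Phi.inv}, obtaining
$$
\tilde\Phi(t)^{-1}\partial_t\tilde\Phi(t)\,u(y)=\tfrac{1}{2}\partial_t\log J(t,\varphi(t)^{-1}(y))\,u(y)+\e\,\partial_t\beta(t,\varphi(t)^{-1}(y))\cdot\nabla_y u(y).
$$
Then, applying Jacobi's formula as above, the scalar coefficient rearranges into $\tfrac{1}{2}\operatorname{div}_y$ of the pushforward vector field $\varphi(t)_*(\e\partial_t\beta)$; equivalently, one may appeal directly to the skew-adjointness established above to conclude that the scalar part must reassemble into the required divergence. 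A Taylor expansion of $\varphi(t)^{-1}(y)=y-\e\tilde\beta_\e(t,y)$ then gives the leading term $\e\partial_t\beta(t,y)$ plus an $\e^2$ remainder $\mathcal{R}^2_\e$ with seminorms uniformly bounded in $\e$.

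The main obstacle is not conceptual but rather bookkeeping: one must handle the Jacobian determinant terms carefully to check that the zero-order ``multiplication by $c$'' parts reassemble exactly into $\tfrac{1}{2}\operatorname{div}$ of the relevant pushforward vector field, so that the resulting first-order operator is manifestly skew-adjoint and hence expressible as $\Opw(i\xi\cdot W)$. This is precisely where Jacobi's formula $\partial_t\log\det(I+\e\nabla\beta)=\e\operatorname{tr}[(I+\e\nabla\beta)^{-1}\partial_t\nabla\beta]$ together with $(I+\e\nabla_x\beta)^{-1}=\mathrm{d}\varphi(t)^{-1}\circ\varphi(t)$ is used; alternatively, invoking the a priori skew-adjointness deduced in the opening paragraph makes this identification automatic and shortens the proof. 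Uniform boundedness of the seminorms of $\mathcal{R}^1_\e$ and $\mathcal{R}^2_\e$ in $\e$ then follows from the smoothness of $\beta,V$ together with the uniform smoothness of $\tilde\beta_\e$ recalled after~\eqref{inv.diffeo}.
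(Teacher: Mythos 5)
Your proof is correct and follows essentially the same computational route as the paper: conjugate, Taylor-expand in $\e$, and use unitarity of $\tilde\Phi(t)$ to control the zero-order terms. The packaging is slightly tidier than the paper's in two respects. First, you establish skew-adjointness of $\tilde\Phi(t)^{-1}\partial_t\tilde\Phi(t)$ a priori by differentiating $\tilde\Phi(t)^*\tilde\Phi(t)=\mathrm{Id}$, and then use the classification of real skew-adjoint first-order operators as $\Opw(i\xi\cdot W)$ to read off the vector field; the paper instead proves the form $\Opw(i\xi\,g_\e)$ first (hence skew-adjointness) by an explicit verification that the zero-order piece equals $\tfrac12\div g_{\e,2}$, which is exactly your Jacobi-formula bookkeeping but carried out in coordinates (their equations \eqref{equat1}--\eqref{equat2}). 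Second, for $\tilde\Phi(t)^{-1}H\tilde\Phi(t)$ you invoke the exact conjugation identity $\tilde\Phi^{-1}(X\cdot\nabla+\tfrac12\div X)\tilde\Phi=(\varphi_*X)\cdot\nabla+\tfrac12\div(\varphi_*X)$, i.e.\ you treat $\tilde\Phi$ as the half-density pullback, which produces the whole operator at once; the paper instead computes modulo $\Psi^0$ and lets the already-established skew-adjointness pin down the zero-order term. The Taylor expansion of $\varphi_*X$ and of $\varphi^{-1}(y)=y-\e\tilde\beta_\e(t,y)$, together with the uniform bounds on $\tilde\beta_\e$ recalled after \eqref{inv.diffeo}, give the $\e^2\mathcal{R}^{1/2}_\e$ remainders exactly as in the paper. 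So: same argument, with a modestly cleaner organization that trades the paper's explicit identity $\tfrac12\div g_{\e,2}=g_{\e,1}$ for an a priori appeal to unitarity.
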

Here, we use the formalism of pseudodifferential operators (see section \ref{sec.pseudo}) in order to identify the symbols of $\tilde\Phi(t)^{-1}H\tilde\Phi(t)$ and $\tilde\Phi(t)^{-1}\pa_t \tilde{\Phi}(t)$, that we will need to prove Proposition \ref{prop.normal.form}. We remark that on the torus it is possible to use the Weyl quantization exactly as over $\R^n$ and that all properties in Lemma \ref{symbolic.calculus} hold. See~\cite[Ch.~5]{Zworski} for details.

\begin{proof}
First of all remark that, since $H(t)$ defined as in \eqref{H(t)} is skew adjoint, using \eqref{unitary} we have 
\[
[\tilde{\Phi}(t)^{-1}H\tilde{\Phi}(t) ]^*=\tilde\Phi(t)^* H^* (\tilde\Phi(t)^{-1})^* \stackrel{\eqref{unitary}}{=}-\tilde\Phi(t)^{-1}H\tilde{\Phi}(t),
\]
which proves skewadjointness of $\tilde{\Phi}(t)^{-1}H\tilde\Phi(t)$. Moreover, from its definition, $\tilde{\Phi}(t)^{-1}H\tilde{\Phi}(t)$ is a real valued differential operator of order one. Thus, from point 2 of Lemma \ref{symbolic.calculus}, we have
\begin{equation}\label{symbol.pt.2}
    \tilde{\Phi}(t)^{-1}H\tilde\Phi(t)=\Opw(i\xi(f_\e(t,x))),
\end{equation}
for some $f_\e \in C^\infty(\T^{n+1})$, real valued. We now compute such a function $f_\e$ in order to prove \eqref{expand.H1}. For every $u \in C^\infty(\T^{n+1})$ we have
\begin{multline*}
         \tilde\Phi(t)^{-1}H\tilde\Phi(t)u(t,x) 
        = \tilde\Phi(t)^{-1} [( \nu + \e V(t,x) ) \cdot \nabla_x (\det(\text{Id}+ \e \nabla_x \beta(t, x))^{\frac12}u(t, x+ \e \beta(t,x)))] \ \mbox{ mod } \Psi^0(\T^{n+1}) \\
         = \tilde\Phi(t)^{-1}[(\nu + \e V(t,x)) \cdot \det(\text{Id} + \e \nabla_x \beta(t,x))^{\frac12} (\text{Id}+ \e \nabla_x \beta(t,x)) \cdot \nabla_x u(t, x + \e \beta(t,x))] \ \mbox{mod} \Psi^0(\T^{n+1}) \\
         \stackrel{\substack{\eqref{Phi.inv}\\ \eqref{inv.det}}}{=}(\nu + \e V(t,x + \e \tilde{\beta}(t,x)))(\text{Id} + \e \nabla_x \beta(t, x + \e \tilde\beta(t,x))) \nabla_x u(t,x) \quad \mbox{ mod } \quad \Psi^0(\T^{n+1}).
\end{multline*}
Since we are interested in finding the first term of the expansion in $\e$ of $f_\e$ in \eqref{symbol.pt.2}, we now perform a Taylor expansion in a neighborhood of $\e=0$ and we obtain:
\begin{equation}
    \tilde\Phi(t)^{-1}H\tilde\Phi(t)u(t,x)= [(\nu + \e V(t,x))(\text{Id} + \e \nabla_x \beta(t,x)) + O_{C^\infty}(\e^2)]\nabla_x u(t,x) \quad \mbox{ mod } \quad \Psi^0(\T^{n+1}),
\end{equation} 
which gives the expansion in \eqref{expand.H1}.

We now prove \eqref{expand.H2}. We first show that there exists $g_\e \in C^\infty(\T^{n+1})$ such that 
\begin{equation}\label{skew.H2}
    \tilde\Phi(t)^{-1}\pa_t \tilde\Phi(t)= \Opw(i\xi g_\e(t,x)).
\end{equation}
This proves that  $\tilde\Phi(t)^{-1}\pa_t \tilde\Phi(t)$ is a skew adjoint operator (see Lemma \ref{symbolic.calculus}). Next, we derive the explicit expression in \eqref{expand.H2}. 
In order to prove \eqref{skew.H2}, we first remark that
\begin{equation}\label{be.careful}
\tilde\Phi(t)^{-1}\pa_t \tilde\Phi(t)[u](t,x)=\tilde\Phi(t)^{-1}\pa_t [\tilde\Phi(t)u](t,x) - \pa_t u(t,x), \quad \forall u \in C^\infty(\T^{n+1}). 
\end{equation}
Next we compute $\tilde\Phi(t)^{-1}\pa_t[\tilde\Phi(t)u](t,x)$:
\begin{equation}\label{H.3}
\begin{split}
\tilde\Phi(t)^{-1}\pa_t[\tilde\Phi(t)u](t,x) & 
\stackrel{\eqref{Phi}}{=} \tilde\Phi(t)^{-1}\pa_t[\det(\text{Id}+ \e \nabla_x \beta(t,x))^{\frac12}u(t, x +\e \beta(t,x))] \\
&= \tilde\Phi(t)^{-1}\left[\left(\pa_t \det(\text{Id}+ \e \nabla_x \beta(t,x))^{\frac12}\right)u(t,x+\e \beta(t,x))\right]\\
&+ \tilde{\Phi}(t)^{-1} \left[\det(\text{Id}+ \e \nabla_x \beta(t,x))^{\frac12}(\pa_t u(t, x +\e \beta(t,x))\right]\\
&+\tilde\Phi(t)^{-1} \left[\det(\text{Id}+ \e \nabla_x \beta(t,x))^{\frac12}\e \pa_t \beta(t,x) \cdot \nabla_x u(t, x +\e \beta(t,x)))\right]\\
& \stackrel{\eqref{Phi.inv}}{=} \left(\pa_t \det(\text{Id}+ \e \nabla_x \beta(t,x))^{\frac12}\right)\vert_{y= x + \e \tilde{\beta}_\e}\det(\text{Id}+ \e \nabla_x \tilde{\beta}_\e(t,x))^{\frac12} u(t,x) \\
&+ \pa_tu(t,x) + \e \pa_t\beta(t, x+ \e \tilde\beta_\e(t,x)) \cdot \nabla_x u(t,x).
\end{split}
\end{equation}

Thus, plugging \eqref{H.3} in \eqref{be.careful}, we obtain that
\begin{equation}\label{computation}
\begin{split}
    \tilde\Phi(t)^{-1}\pa_t \tilde\Phi(t)[u](t,x) & = \underbrace{\left(\pa_t \det(\text{Id}+ \e \nabla_x \beta(t,x))^{\frac12}\right)\vert_{y= x + \e \tilde{\beta}_\e}\det(\text{Id}+ \e \nabla_x \tilde{\beta}_\e(t,x))^{\frac12}}_{g_{\e, 1}(t,x)} u(t,x)\\
    & + \underbrace{ \e \pa_t\beta(t, x+ \e \tilde\beta_\e(t,x))}_{g_{\e, 2}(t,x)} \cdot \nabla_x u(t,x).
    \end{split}
\end{equation}

Recalling now that, from the composition rule for pseudodifferential operators (see Lemma \ref{symbolic.calculus}), for any $h(t,x) \in C^\infty(\T^{n+1})$ we have
\begin{equation}
    h(t,x) \cdot \nabla_x +\frac{\div_x h(t,x)}{2}= \Opw(i\xi \cdot h(t,x)),
\end{equation}
we are done with the proof of \eqref{skew.H2} if we show that 
\begin{equation}\label{last.to.prove}
    \frac{\div{g_{\e,2}(t,x)}}{2} = g_{\e, 1}(t,x),
\end{equation}
with $g_{\e,1}$ and ${g_{\e, 2}}$ given in \eqref{computation}. Indeed, plugging \eqref{last.to.prove} in \eqref{computation} we obtain \eqref{skew.H2} with $g_{\e}(t,x):=\e \pa_t \beta(t, x + \e \tilde{\beta}_\e(t,x))$. We show that \eqref{last.to.prove} holds with a direct computation: 
\begin{equation}\label{equat1}
\begin{split}
    g_{\e,1}(t,x) & \stackrel{\eqref{computation}}{=} \left(\pa_t \det(\text{Id}+ \e \nabla_x \beta(t,x))^{\frac12}\right)\vert_{y= x + \e \tilde{\beta}_\e}\det(\text{Id}+ \e \nabla_x \tilde{\beta}_\e(t,x))^{\frac12}\\
    & \stackrel{\eqref{inv.det}}{=}\frac12 \pa_t \det(\text{Id}+ \e \nabla_x \beta(t, x+ \e \tilde{\beta}_\e(t,x))) \det(\text{Id} + \e \nabla_x \tilde{\beta}_\e(t,x)) \\
    & = \frac12 \text{tr}\left((\text{Id}+ \e \nabla_x \beta)^{-1}\pa_t ( \text{Id} + \e \nabla_x \beta) \right) \\
    & \stackrel{\eqref{inv.det}}{=} \frac12 \text{tr}\left((\text{Id}+ \e \nabla_x \tilde{\beta}_\e) \e \pa_t  \nabla_x \beta) \right) 
\end{split}
\end{equation}
where for the unlabeled equality we used Jacobi identity. On the other hand we also have: 
 \begin{equation}\label{equat2}
 \begin{split}
     \div g_{\e,2}(t,x) & \stackrel{\eqref{computation}}{=} \div (\e \pa_t\beta(t, x+ \e \tilde\beta_\e(t,x)))\\
     & = \text{tr}\left(\e \pa_t \nabla_y \beta(t,y) \vert_{y=x +\e \tilde{\beta}_\e(t,x)} (\text{Id} + \e \nabla_x \tilde{\beta}_\e(t,x))\right).
     \end{split}
 \end{equation}
Putting together \eqref{equat1} and \eqref{equat2}, we obtain \eqref{last.to.prove}. Thus we have shown that 
\begin{equation}\label{done}
    \tilde\Phi(t)^{-1}\pa_t \tilde\Phi(t)[u](t,x)= \Opw(i \xi \cdot\e \pa_t \beta(t, x +\e \tilde{\beta}_\e(t,x))).
\end{equation}
Finally, we obtain \eqref{expand.H2} performing a Taylor expansion of \eqref{done} in a neighborhood of $\e=0$, concluding the proof of the lemma. 
\end{proof}

\subsubsection{Proof of Proposition \ref{prop.normal.form}}
Let $\tilde{\Phi}(t)$ be a transformation of the form in \eqref{Phi}, with $\beta \in C^\infty(\T^{n+1})$ to be determined and recall the expression for its inverse in \eqref{Phi.inv}. 
Take $u(t,x)$ solution of  \eqref{transp.torus}, that we rewrite as $\pa_t u = H(t) u$ with $H(t)$ in \eqref{H(t)}, and  recall from previous discussion that defining $v(t,x)$ via $u(t, x)= \tilde{\Phi}(t)v(t,x)$ then $v(t,x)$ solves 
\begin{equation}\label{first.transform}
    \pa_t v(t, x)= \tilde{\Phi}(t)_* H(t)v(t,x).
\end{equation}
See \eqref{pushforward}. We claim that it is possible to choose $\e_0>0$ small enough and $\beta \in C^\infty(\T \times \T^n; \R^n)$ such that, for every $0< \e <\e_0$, 
\begin{equation}\label{claim}
    \tilde\Phi(t)_*H(t)=\Opw(i\xi(\nu + \e V_{R_\nu}(t,x) + \e^2 \tilde{W}_{\nu,\e}(t,x))) \quad \forall t \in \T,
\end{equation}
where $\tilde{W}_{\nu,\e}(t,x) \in C^\infty(\T \times \T^n; \R^n)$ with semi-norms uniformly bounded in terms of $\e$ and 
\begin{equation}\label{V_R}
      V_{R_\nu}(t, x): = \sum_{k \in \Z^n} v_{k, \nu \cdot k }e^{\im k\cdot (x+\nu t)} \equiv \la V \ra_\nu(x+ \nu t),
\end{equation}
where $v_{k, \nu \cdot k }$ are the resonant Fourier coefficients of $V$ (see \eqref{fourier} and \eqref{eq.res.average}). 
Let us postpone the proof of this claim, and conclude the proof of the proposition assuming \eqref{claim}. Defining indeed the time translation $\cU(t): \cH^\sigma \to \cH^\sigma$, which acts as $ \cU(t)w(t,x):= w(t,x-\nu t)$, for all $t \in \R$ and for all $w \in C^\infty(\T \times \T^n; \R^n)$,
we consider 
\begin{equation}\label{u_1}
  u_1(t,x):= \cU(t)v(t,x)=v(t, x-\nu t),
\end{equation} 
where $v(t,x)$ solves equation \eqref{first.transform}. First we compute the equation solved by $u_1(t,x)$: 
\begin{equation}
\begin{split}
    \pa_t u_1(t,x) 
    & = \pa_t v(t, x-\nu t) -\nu \cdot \nabla_x v(t, x-\nu t) \\
    &= \pa_t v(t, x- \nu t) -  \Opw(i\nu\cdot\xi) v(t,x-\nu t) \\
    &  \stackrel{\substack{\eqref{first.transform}\\ \eqref{claim}}}{=} \Opw(i \xi\cdot(\e V_{R_\nu}(t, x- \nu t) + \e^2 \tilde{W}_{\nu,\e} (t, x- \nu t)))  v(t, x-\nu t)\\
    & \stackrel{\substack{\eqref{V_R}\\ \eqref{u_1}}}{=} \Opw(i\xi\cdot(\e \la V \ra_\nu (x) + \e^2  W_{\nu,\e}(t,x))) u_1(t,x),
\end{split}
\end{equation}
where $W_{\nu,\e}:= \tilde{W}_{\nu,\e}(t, x-\nu t)$. Thus $u_1(t,x)$ defined as in \eqref{u_1} solves \eqref{normal.form.eq}. Moreover the transformation $\Phi(t):= \cU(t)\tilde\Phi(t)^{-1}: \cH^\sigma \to \cH^\sigma$ is linear, $2\pi$-periodic and bounded for every $\sigma \in \R$, since both $\cU(t)$ and $\tilde\Phi(t)^{-1}$ are. Furthermore, the continuity constants are uniform in terms of $0\leq\e\leq \e_0$ (recall that $\Phi(t)$ depends implicitly on $\e$). This concludes the proof of the proposition since $u(t,x)= \Phi(t) u_1(t,x)$ by construction.

Thus we are left with finding $\e_0>0$ and $\beta \in C^\infty(\T \times \T^n; \R^n)$, so that \eqref{claim} holds. 
First of all, we plug  the expansions \eqref{expand.H1} and \eqref{expand.H2} in  \eqref{Phi.star}, obtaining 
\begin{equation}\label{2.pushforward}
        \Phi_*(t)H(t)=\Opw(i \xi\cdot( \nu + \e V(t,x)  + \e \nu \cdot \nabla_x \beta- \e \pa_t \beta + O_{C^\infty}(\e^2))),
    \end{equation}
    in a neighborhood of $\e=0$. 
    Next we consider the Fourier expansion of the term $T(t, x):=\nu +\e V +\e \nu \cdot \nabla_x \beta - \e \pa_t \beta$: 
    \begin{equation}\label{T}
        T(t, x) = \nu +\e V +\e \nu \cdot \nabla_x \beta - \e \pa_t \beta= \nu + \e \sum_{(k, \ell) \in \Z^{n+1}} ( v_{k, \ell} +\im (k \cdot \nu -\ell) \beta_{k, \ell})e^{ikx+i\ell t},
    \end{equation}
where $V(t, x)= \sum_{k, \ell} v_{k, \ell} e^{i(kx+\ell t)}$ and $\beta(t, x)= \sum_{k, \ell \in \Z} \beta_{k, \ell} e^{i(kx+\ell t)}$. Define 
\begin{equation}\label{bkl}
\beta_{k, \ell}:=
\begin{cases}
    \frac{ v_{k, \ell}}{i(\ell-k \cdot \nu)} & \mbox{ if }  \ell  \neq k \cdot \nu , \\
    0 & \mbox{ otherwise.} 
\end{cases}
\end{equation}
The corresponding $\beta$ is a well defined and smooth function over $\T^n$.
Substituting the Fourier coefficients of $\beta$ (see \eqref{bkl}), in the expression of $T(t, x)$ in \eqref{T}, we get: 
\begin{equation}\label{T.1}
T(t, x)= \nu + \e \sum_{\substack{\ell+ \nu \cdot k=0\\ k \in \Z}}v_{k, \nu \cdot k} e^{ik(x+\nu t)}=\nu  +\e V_{R_\nu}(t, x),
\end{equation}
where we recall the definition of $V_{R_\nu}$ in \eqref{V_R}. Thus recognizing that $\Phi_*(t) H(t)=\Opw(i\xi(T(t, x) +O_{C^\infty}(\e^2)))$ (se \eqref{T} and \eqref{2.pushforward}) and plugging \eqref{T.1} in \eqref{2.pushforward}, we obtain
$$
\Phi_*(t)H(t)= \Opw(i\xi(\nu+ \e V_{R_\nu}(t, x)+ \tilde{W}_{\nu,\e})),
$$
where $\tilde{W}_{\nu,\e}$ is the remainder in the order one operator.  This gives \eqref{claim}. Remark that $\e_0$ is chosen so that both \eqref{inv.diffeo} and the Taylor expansions hold for all $0< \e < \e_0$. 

\subsection{Genericity in dimension $2$ and proof of Theorem \ref{theorem.torus}}
In this last section we conclude the proof of Theorem \ref{theorem.torus}. 
Fix any $\nu \in \N_+^2$ and define 
    \begin{equation}\label{set.A.m}
        \cA_\nu:= \{ V \in C^\infty(\T^3; \R^2) : \la V \ra_\nu(x) \in \fX^{MS}(\T^2) \}.
    \end{equation}
Recall that $C^{\infty}$ is endowed with its natural Fr\'echet topology, i.e. the one induced by the seminorms $(\|.\|_{C^k})_{k\geq 0}$. One can verify that the map
    $$
    V\in C^{\infty}(\T^3;\R^2)\mapsto \langle V\rangle_\nu \in C^{\infty}(\T^2;\R^2)
    $$
is linear and continuous with respect to the Fr\'echet topology on each space. In particular, $\fX^{MS}(\T^2)$ being an open subset of $C^\infty(\T^2;\R^2)$, one finds that $\cA_\nu$ is open for the Fr\'echet topology on $C^{\infty}$. Regarding density, we let $V_0\in C^{\infty}(\T^3;\R^2)$ and we decompose it as
$$
V_0(t,x)=\langle V_0\rangle_\nu (x+t\nu)+\left(V_0(t,x)- \langle V_0\rangle_\nu (x+t\nu)\right).
$$
The second term on the right hand side has all its Fourier coefficients in $\Z^3\setminus\{(k\cdot\nu,k):k\in\Z^2\}$. In particular, it average $\langle\cdot\rangle_\nu$ is identically zero. Now recalling that $\fX^{MS}(\T^2)$ is a dense subset of $C^\infty(\T^2;\R^2)$, one can find $W$ in $\fX^{MS}(\T^2)$ arbitrarly close to $\langle V_0\rangle_\nu$. Letting 
$$
V(t,x):=W(x+t\nu)+\left(V_0(t,x)- \langle V_0\rangle_\nu (x+t\nu)\right),
$$
one finds that $V$ can be made arbitrarly close to $V_0$. This shows that $\cA_\nu$ is dense and concludes the proof of Theorem~\ref{theorem.torus} when combined with Theorem~\ref{theorem.torus.n}.

\appendix
\section{Definitions in dynamical systems}\label{Appendix.dyn}
In this appendix, we review the definitions of dynamical systems theory that we use in Sections \ref{MS.section} and \ref{section.escape}. 
 We refer to \cite{palis1998} for Morse-Smale theory and to \cite{BrinStuck, Fisher} for a complete review of the theory related to hyperbolic dynamical systems. 
Let $V$ be a smooth vector field on a smooth manifold $M$ of dimension $n \geq 1$. We denote by $\varphi_V^t(x)$ its flow at time $t \in \R$ (see \eqref{flow}).

\begin{definition}\label{hyp.crit}
A point $\Lambda \in M$ is a \emph{critical point} of $V$ if $V(\Lambda)=0$. We say that a critical point is\emph{ hyperbolic} if $\di V(\Lambda): T_\Lambda M \to T_{\Lambda}M$ has eigenvalues $\{\lambda_1, \ldots, \lambda_n\}$ such that Re$(\lambda_i) \neq 0$, for all $i=1, \ldots, n$.
\end{definition}

\begin{definition}\label{hyp.orbit}
A point $x_0\in M$ is a periodic point if $V(x_0)\neq 0$, and there exists $T_0>0$ such that $\varphi^{T_0}_V(x_0)=x_0$. Moreover, a periodic point $x_0$ is  hyperbolic if $\di\varphi_V^{T_0}(x_0)$ has $1$ as a simple eigenvalue and no other eigenvalues of modulus $1$. The set $\Lambda=\{\varphi_V^t(x_0):0\leq t\leq T_0\}$ is then called a hyperbolic closed orbit.
\end{definition}

\begin{definition}[Non-wandering set]\label{nonwand.set}
We say that a point $x \in M$ is \emph{wandering} if there exist some open neighborhood $\cO$ of $x$ and a time $T_x>0$ such that
\[
\cO \cap \left( \bigcup_{|t|>T_x}\varphi^t_V(\cO)\right)=\emptyset.
\]
We denote by $\operatorname{NW}(V)=\operatorname{NW}(\varphi_V^t)$ the \emph{non-wandering set} of $V$, i.e., the union of all points $y \in M$ which are not wandering. 
\end{definition}

\begin{definition}[$\alpha$ and $\omega$ limit sets]\label{limit.sets}
Let $x \in M$. We define
\[
\alpha(x):= \bigcap_{T \leq 0}\overline{\{\varphi_V^t(x) : t \leq T \}}, \quad \mbox{ and }
 \quad \omega(x):= \bigcap_{T \geq 0}\overline{\{\varphi_V^t(x) : t \geq T \}}.
 \]
 We remark that, if $x \in \Lambda_i$, with $ \Lambda_i \in \operatorname{Crit}(V)$ (see \eqref{critical.set}), then $\alpha(x)=\omega(x)=\Lambda_i$. 
\end{definition}

\begin{definition}[Stable and unstable manifolds]\label{s.u.man}
Let $K$ be a closed invariant subset of $M$. We define the stable and unstable manifolds associated with $K$ as 
$$
W^s(K):=\{ x \in M : \omega(x) \subset K\}\quad \mbox{ and }\quad W^u(K):= \{ x \in M : \alpha(x) \subset K \}. 
$$ 

\end{definition}

\begin{definition}\label{def.foliation}
    Let $\Lambda \in \text{Crit}(V)$ be a hyperbolic closed orbit of period $T_0$. The stable and unstable manifolds $W^{s / u}(\Lambda)$ are foliated by the family of smooth submanifolds of codimension one $\{ W^{ss/uu}(x_0)\}_{x_0 \in \Lambda}$, where \[     
    W^{ss}(x_0) := \{ x \in M \ : \lim_{n \to +\infty} \varphi^{n T_0}(x)= x_0 \},     \] 
    and 
    \[
    W^{uu}(x_0) := \{ x \in M \ : \lim_{n \to +\infty} \varphi^{-n T_0}(x)= x_0 \}.
    \]
    We remark that the same foliation can be trivially defined also in the case of critical points, for which $W^{ss/uu}(x_0)=W^{s/u}(x_0)$. 
\end{definition}

\section{Existence of solutions to transport equations}
\label{a:existence}

In this appendix, we discuss the existence of solutions to~\eqref{pseudo.transport}. Recall that the main difference with~\cite{MasperoRobert} is the presence of a non-skew adjoint term $\e \OpM(b_{\e,2})$ in the right-hand side of the equation with $b_{\e,2}(t)$ being real valued, lying in $\mathcal{C}^0_b(\R,\bS^0)$ and having all its seminorms uniformly bounded in terms of $0\leq\e\leq 1$ and $t\in\R$. To deal with this issue, we will apply a perturbative argument compared with the results from this reference. For the sake of simplicity, we set
$$
\mathcal{B}_0(t)=\OpM(i\xi(V(x)+\e\mathcal{P}(t,x))+ib_{\e,1}),\ \text{and}\ 
\mathcal{R}_0(t)=\e\OpM( b_{\e,2}).
$$
We denote by $\mathcal{U}_0(t,s)\in\mathcal{C}^0(\R\times\R;\mathcal{L}(\cH^{\sigma}))$ the flow associated with $\mathcal{B}_0(t)$. Recall from~\cite[Th.~1.2]{MasperoRobert} that it satisfies 
\begin{equation}\label{e.masperorobert}
\|\mathcal{U}_0(t,s)\|_{\cH^\sigma\rightarrow \cH^\sigma}\leq C_\sigma e^{C_\sigma|t-s|}
\end{equation} for every $\sigma\geq 0$ and $t,s\in\R$ and that it is unitary in $L^2$ (i.e., for $\sigma=0$). Thanks to Duhamel's principle, the equation we are interested in can be rewritten under an integral form, for every $t$ in $\R$,
$$
u_\e(t)=\mathcal{U}_0(t,0)u_\e(0)+\int_0^t\mathcal{U}_0(t,\tau)\mathcal{R}_0(\tau)u_\e(\tau)d\tau.
$$
Hence, we fix $u_0\in\cH^\sigma$, $T>0$ and we consider the functional 
$$
F:u\in\mathcal{C}^0([-T,T];\cH^\sigma)\mapsto \mathcal{U}_0(t,0)u_0+\int_0^t\mathcal{U}_0(t,\tau)\mathcal{R}_0(\tau)u(\tau)d\tau \in\mathcal{C}^0([-T,T];\cH^\sigma).
$$
Our goal is to prove the existence of a fixed point $F(v)=v$ in $\cH^\sigma$. To do that, we proceed by induction and we set 
$$v_0=u_0,\quad v_{n+1}=F(v_n).$$   
One has
$$
\|v_{n+1}(t)-v_n(t)\|_\sigma\leq C_{\sigma,T} \int_0^t\|v_{n}(\tau)-v_{n-1}(\tau)\|_\sigma d\tau, \quad \forall -T \leq t \leq T, \ \forall n \in \N,
$$
where we used~\eqref{e.masperorobert} and the property that $\mathcal{R}_0(\tau):\cH^\sigma\rightarrow \cH^{\sigma}$ is a bounded operator thanks to~\eqref{e.cv}. We remark that $C_{\sigma,T}>0$ depends only on $\sigma$, $T$ and the norms of $\mathcal{B}_0$ and $\mathcal{R}_0$. By induction, one finds
$$
\|v_{n+1}(t)-v_n(t)\|_\sigma\leq \frac{(C_{\sigma,T}t)^n}{n!} \sup_{\tau\in[-T,T]}\|v_{1}(\tau)-v_{0}(\tau)\|_\sigma ,
$$
from which we can infer that $(v_n)_{n\geq 0}$ is a Cauchy sequence in $\mathcal{C}^0([-T,T],\cH^\sigma)$. By continuity of the functional $F$, we find that the limit $v_\infty$ satisfies $F(v_\infty)=v_\infty$. The same argument shows that the solution is unique. As $T$ can be chosen arbitrarily, we find that there exists an unique solution to~\eqref{pseudo.transport} that is defined on $\R$.

\small

\bibliographystyle{alpha}

\Addresses
\end{document}